\renewcommand\footnoterule{%
  \vfill
  \kern-3\p@
  \hrule\@width.4\columnwidth
  \kern2.6\p@}
\renewcommand*{\@makefnmark}{\hbox{\textsuperscript{\normalfont [\LiningNumbers\@thefnmark]}}}
\renewcommand*{\@makefntext}[1]{\parindent 1.0em\noindent\ifdefempty{\@thefnmark}{}{\hb@xt@-1.0em{\hss \normalfont [\LiningNumbers \@thefnmark]}\hspace{1.0em}}#1}
\newcommand{\footpar}[1]{\gdef\@thefnmark{}\@footnotetext{#1}}
\newcommand{\strong}[1]{\textbfup{#1}}
\title{Fibrations of AU-contexts beget fibrations of toposes}
\author{
  Sina Hazratpour*
  \\
  \texttt{sinahazratpour@gmail.com}
  \and
  Steven Vickers**\\
  \texttt{s.j.vickers@cs.bham.ac.uk}
}
\date{} %21 August 2018
\begin{document}
\pagenumbering{arabic}
\maketitle
\footpar{\{*,**\} School of Computer Science, University of Birmingham, Birmingham, UK.}

\begin{abstract}
Suppose an extension map $U\maps\thT_1 \to \thT_0$ in the 2-category
$\Con$ of contexts for arithmetic universes satisfies a Chevalley criterion for being an (op)fibration in $\Con$.
If $M$ is a model of $\thT_0$ in an elementary topos $\CS$ with nno,
then the classifier $p\maps\CS[\thT_1/M]\to\CS$ satisfies Johnstone's
criterion for being an (op)fibration in the 2-category $\ETopos$
of elementary toposes (with nno) and geometric morphisms.

Along the way, we provide a convenient reformulation of Johnstone's criterion.
\end{abstract}

%%%%%%%%%%
%%%%%%%%%%
\section{Introduction}
\label{sec:intro}
%%%%%%%%%%
For many special constructions of topological spaces (which for us will be point-free, and generalized in the sense of Grothendieck), a structure-preserving morphism between the presenting structures gives a map between the
corresponding spaces. Two very simple examples are: a function $f\maps X \to Y$ between sets already is a map between the corresponding discrete spaces; and a homomorphism $f\maps K \to L$ between two distributive lattices gives a map \emph{in the opposite direction} between their spectra.
The covariance or contravariance of this correspondence is a fundamental property of the construction.

In topos theory we can relativize this process.
A presenting structure in an elementary topos $\CE$ will give rise to a bounded geometric morphism $p \maps \CF \to \CE$, where $\CF$ is the topos of sheaves over $\CE$ for the space presented by the structure. Then we commonly find that the covariant or contravariant correspondence mentioned above makes every such $p$ an opfibration or fibration in the 2-category of toposes and geometric morphisms.

If toposes are taken as bounded over some fixed base $\baseS$, in the 2-category $\BTopos/\baseS$, then there are often easy proofs got by using the Chevalley criterion
to show that the generic such $p$,
taken over the classifying topos for the relevant presenting structures,
is an (op)fibration.
See~\cite{svw:Gelfand-spectra} for some simple examples of the idea,
though there are still questions of strictness left unanswered there.

However, often there is no natural choice of base topos $\baseS$,
and Johnstone~\cite[B4.4]{johnstone:elephant1} proves (op)fibrational results in $\BTopos$.
These are harder both to state (the Chevalley criterion is not available) and to prove,
but stronger because slicing over $\baseS$ restricts the 2-cells.

In this paper we show how to use the arithmetic universe (AU) techniques of \cite{vickers:au-top} to get simple proofs using the Chevalley criterion of the stronger, base-independent (op)fibration results in $\ETopos$,
the 2-category of elementary toposes with nno,
and arbitrary geometric morphisms.

Our starting point is the following construction in \cite{vickers:au-top},
using the 2-category $\Con$ of AU-sketches in \cite{vickers:sk-au}.
Suppose $U \colon \thT_1 \to \thT_0$ is an extension map in $\Con$,
and $M$ is a model of $\thT_0$ in $\baseS$, an elementary topos with nno.
Then there is a geometric theory $\thT_1/M$, of models of $\thT_1$ whose $\thT_0$-reduct is $M$, and so we get a classifying topos
$p \maps \baseS[\thT_1/M] \to \baseS$.
Our main result (Theorem~\ref{thm:main}) is that --
\begin{center}
if $U$ is an (op)fibration in $\Con$, using the Chevalley criterion,

then $p$ is an (op)fibration is $\ETopos$,
using the Johnstone criterion.
\end{center}

Throughout, we assume that
\emph{all our elementary toposes are equipped with
natural numbers object (nno).}
Without an nno the ideas of generalized space do not go far
(because it is needed in order to get an object classifier),
and AU techniques don't apply.

%%%%%%%%%
%%%%%%%%%
\section{Overview}
\label{sec:overview}
%%%%%%%%%
In \textsection \ref{sec:2-categories of toposes}
we review relevant 2-categories of toposes,
including our new 2-category $\GTopos$ in which the
objects are bounded geometric morphisms.

In \textsection \ref{sec:2-category of contexts} we quickly review the main aspects of the theory of AU-contexts,
our AU analogue of geometric theories in which
the need for infinitary disjunctions in many situations has been
satisfied by a type-theoretic style of sort constructions
that include list objects (and an nno).
The contexts are ``sketches for arithmetic universes''
\cite{vickers:sk-au},
and we review the principal syntactic constructions
on them that are used for continuous maps and 2-cells.

In \textsection \ref{sec:classifying-topos-of-ctx}, we
review the connection between contexts and toposes
as developed in~\cite{vickers:au-top},
along with some new results.
A central construction shows how context extension
maps $U\maps\thT_1\to\thT_0$ can be treated as
\emph{bundles} of generalized spaces:
if $M$ is a point of $\thT_0$
(a model of $\thT_0$ in an elementary topos $\CS$),
then the fibre of $U$ over $M$,
as a generalized space over $\CS$,
is a bounded geometric morphism
$p\maps\CS[\thT_1/M]\to\CS$ that classifies the
models of $\thT_1$ whose $U$-reduct is $M$.
Much of the discussion is about understanding the
universal property of such a classifier in the
setting of $\GTopos$.

We then move in \textsection \ref{sec:strict-internal-fibrations-in-2cat}
and \textsection \ref{sec:Johnstone-crit}
to a review of two styles of definition for (op)fibrations in 2-categories,
which we shall call the \emph{Chevalley} and \emph{Johnstone} criteria.

The standard notions of fibration $p\maps E \to B$ as properties of functors between categories
can be generalized to properties of 1-cells in 2-categories,
but how this may be done depends on the structure available in the 2-category.

The basic notion says that for every morphism $u\maps X \to Y$ in a category $B$,
cartesian lifting gives a functor from the fibre of $p$ over $Y$ to that over $X$,
with some universality conditions that express cartesianness.
When we generalize from $\Cat$ to some other 2-category $\CK$,
the obvious generalization is that $X$ and $Y$ become 1-cells from 1 to $B$,
with $u$ a 2-cell between them.
However, even when $\CK$ has a terminal object,
there may fail to be enough 1-cells from 1 to $B$ to make a satisfactory definition this way.
This is generally the case with 2-categories of toposes.

The crude remedy for this is to consider $X$ and $Y$ as 1-cells
from arbitrary objects $B'$ to $B$,
and this underlies Johnstone's definition for $\BTopos$ in \cite[B4.4]{johnstone:elephant1}.
This definition requires very little structure on $\CK$ other than some
-- not necessarily all -- bipullbacks,
sufficient to have bipullbacks of $p$ along all 1-cells to $B$.
We shall call it the \emph{Johnstone} style of definition of fibration.
This is intricate, because it has to deal with many coherence conditions.

Now suppose $\CK$ has comma objects, which unfortunately $\BTopos$ does not,
so far as we know,
although $\BTopos/\baseS$ and our $\Con$ do.
Then we may have a generic $u$,
a generic 2-cell between 1-cells with codomain $B$,
in which the domain of the 1-cells is the cotensor
$\walkarr\pitchfork B$ of $B$ with the walking arrow category $\walkarr$.
In such a $\CK$, the fibration structure for arbitrary $B'$ and $u$
can be got from generic structure for the generic $u$.
The structure needs to be defined just once, instead of many times for all $B'$.
We shall call this a \emph{Chevalley criterion}.
For ordinary fibrations the idea was attributed to Chevalley by Gray~\cite{gray:fibcofibcat},
and subsequently referred to as the Chevalley criterion by Street~\cite{street:fib-yoneda-2cat}.

Note that, even when we can use the Chevalley style,
there are questions about strictness that we must give precise answers to.
Do we have strict commas or bicommas?
(Is the representing object characterized by isomorphisms of categories or equivalences?)
Is a certain counit of an adjunction an isomorphism (as in~\cite{street:fib-yoneda-2cat})
or an identity (as in \cite{gray:fibcofibcat})?
Our main task in \textsection \ref{sec:strict-internal-fibrations-in-2cat}
is to clarify the 2-categorical structure needed, and the strictness issues,
when we apply the Chevalley criterion in $\Con$.

In \textsection\ref{sec:Johnstone-crit} we shall review the Johnstone criterion.
However, we shall also reformulate it in a way that is simpler but at the same time makes it fairly
painless to mix bounded and unbounded geometric morphisms.
We do this with a $2$-category $\GTopos$ ``of Grothendieck toposes'',
with a 2-functor $\ccod$ to $\ETopos$,
so that the fibre $\GTopos(\CS)$ is equivalent to $\bigslant{\BTopos}{\CS}$.
Our formulation uses the cartesian 1-cells and 2-cells for this 2-functor,
and we review the theory of those,
in its bicategorical form,
in \textsection\ref{sec:cartesianness}.

\textsection\ref{sec:main-results} then provides
the main result, Theorem~\ref{thm:main}.
Suppose $U\maps\thT_1\to\thT_0$ is a
context extension map,
and $p\maps \CS[\thT_1/M] \to \CS$ is a
classifier got as in
\textsection \ref{sec:classifying-topos-of-ctx}.
Then if $U$ is an (op)fibration, so is $p$.

%%%%%%%%
%%%%%%%%
\section{Background: 2-categories of toposes}
\label{sec:2-categories of toposes}
%%%%%%%%
The setting for our main result is the 2-category
$\ETopos$ whose $0$-cells are elementary toposes
(equipped with nno),
whose $1$-cells are geometric morphisms,
and whose $2$-cells are geometric transformations.

However, our concern with generalized spaces means that
we must also take care to deal with \emph{bounded} geometric morphisms.
Recall that a geometric morphism $p\maps \CE \to \CS$ is \textit{bounded}
whenever there exists an object $B \in \CE$
(a \emph{bound} for $p$)
such that every $A\in \CE$ is a subquotient of an object of the form $(p^* I) \times B$ for some $I \in \CS$:
that is one can form following span in $\CE$,
with the left leg a mono and the right leg an epi.
\begin{equation*}
  \begin{tikzcd}[column sep=small]
    & E \arrow[dl,rightarrowtail]
      \arrow[dr,twoheadrightarrow] & \\
    (p^* I) \times B  &&  A
  \end{tikzcd}
\end{equation*}

The significance of this notion can be seen in the
relativized version of Giraud's Theorem
(see~\cite[B3.4.4]{johnstone:elephant1}):
$p$ is bounded if and only if $\CE$ can be got as the topos
of sheaves over an internal site in $\CS$.
(In the original Giraud Theorem, relative to $\Set$,
the bound relates to the small set of generators.)
It follows from this that the bounded geometric morphisms
into $\CS$ can be understood as the generalized spaces,
the Grothendieck toposes, relative to $\CS$.

Bounded geometric morphisms are closed under isomorphism and composition
(see~\cite[B3.1.10(i)]{johnstone:elephant1})
and we get a 2-category $\BTopos$ of elementary toposes,
bounded geometric morphisms, and geometric transformations.
It is a sub-2-category of $\ETopos$, full on 2-cells.

Also~\cite[B3.1.10(ii)]{johnstone:elephant1},
if a bounded geometric morphism $q$ is isomorphic to $pf$,
where $p$ is also bounded, then so too is $f$.
This means that if we are only interested in toposes
bounded over $\CS$,
then we do not have to consider unbounded geometric
morphisms between them.
We can therefore take the
``2-category of generalized spaces over $\CS$''
to be the slice 2-category $\BTopos/\CS$,
where the 1-cells are triangles commuting up to an iso-2-cell.
\cite[B4]{johnstone:elephant1} examines $\BTopos/\CS$
in detail.

For the (op)fibrational results,
\cite[B4]{johnstone:elephant1} reverts to $\BTopos$.
This is appropriate, since the properties hold with respect
to arbitrary geometric transformations,
whereas working in $\BTopos/\CS$ limits the discussion to
those that are identities over $\CS$.

Unbounded geometric morphisms are rarely encountered in practice,
and so it might appear reasonable to stay in $\BTopos$
or $\BTopos/\CS$~\cite[B3.1.14]{johnstone:elephant1}.
However, one notable property of bounded geometric morphisms is that
their bipullbacks along \emph{arbitrary} geometric morphisms
exist in $\ETopos$ and are still bounded~\cite[B3.3.6]{johnstone:elephant1}.
(Note that where~\cite{johnstone:elephant1} says pullback in a 2-category,
it actually means bipullback -- this is explained there in section B1.1.)
Thus for any geometric morphism of base toposes
$f \maps \CS' \to  \CS$,
we have the change of base pseudo-$2$-functor
$f \maps  \BTopos/ \CS \to \BTopos/ \CS'$.
One might say the ``2-category of Grothendieck toposes''
is indexed over $\ETopos_{\iso}$
(where the 2-cells in $\ETopos_{\iso}$ are restricted to isos).
\cite{vickers:au-top} develops this in its use of
AU techniques to obtain base-independent topos results,
and there is little additional effort in allowing change of base
along arbitrary geometric morphisms.
To avoid confronting the coherence issues of indexed 2-categories it takes a fibrational approach,
with a 2-category $\GTopos$ ``of Grothendieck toposes''
fibred (in a bicategorical sense) over $\ETopos_{\iso}$.

We shall take a similar approach,
but note that our 2-category $\GTopos$, which we are about to define,
is \emph{not} the same as that of~\cite{vickers:au-top} --
we allow arbitrary geometric transformations ``downstairs''.
We shall write $\GTopos_{\iso}$ when we wish to refer to the $\GTopos$
of~\cite{vickers:au-top}.

\begin{defn} \label{def:GTop}
  The $2$-category $\GTopos$ is defined as follows.
  We use a systematic ``upstairs-downstairs'' notation with overbars and underbars
  to help navigate diagrams.
\begin{enumerate}[label=(\subscript{\GTopos\ }{{\arabic*}})]
\setcounter{enumi}{-1}
  \item
    0-cells are bounded geometric morphisms $x\maps\xobar\to\xubar$.
  \item
    For ant $0$-cells $x$ and $y$, the $1$-cells from $y$ to $x$ are given by triples $f = \tricell{f}$
    where $\fubar \maps \yubar \to \xubar$
    and $\fobar \maps \yobar \to \xobar$
    are geometric morphisms,
    and the geometric transformation
    $\ftri \maps x \fobar \To \fubar y$ is an isomorphism.
  \item
    If $f$ and $g$ are 1-cells from $y$ to $x$,
    then 2-cells from $f$ to $g$ are of the form
    $\alpha= \lang \alobar,\alubar \rang$
    where $\alobar\maps \fobar \To \gobar$
    and $\alubar\maps \fubar \To \gubar$
    are geometric transformations
    so that the obvious diagram of $2$-cells commutes.
\end{enumerate}
  \begin{equation*}
  % 0-cells
    \begin{tikzpicture}[baseline=(c2.base),scale=1,every node/.style={transform shape}]
      \node(c1) at (2,2) {$\xobar$};
      \node(c2) at (2,0) {$\xubar$};
      \draw[->] (c1) to node(g1){} node[right]{$x$} (c2);
    \end{tikzpicture}
    \quad\quad
  % 1-cells
    \begin{tikzpicture}[baseline=(c2.base),scale=1,every node/.style={transform shape}]
      \node(c0) at (0,2) {$\yobar$};
      \node(c1) at (2,2) {$\xobar$};
      \node(c2) at (0,0) {$\yubar$};
      \node(c3) at (2,0) {$\xubar$};
      \node(c4) at (1,-0.7) {} ;
      \node(d5) at (1,1) {$\ftdar$};
      \draw[->] (c2) to node(g1){} node[below]{$\fubar$} (c3);
      \draw[->] (c0) to node(g1){} node[left]{$y$} (c2);
      \draw[->] (c0) to node(g1){} node[above]{$\fobar$} (c1);
      \draw[->] (c1) to node(g1){} node[right]{$x$} (c3);
    \end{tikzpicture}
    \quad\quad
  % 2-cells
    \begin{tikzpicture}[baseline=(c2.base),scale=0.9,every node/.style={transform shape}]
    \node(c0) at (0,1.5) {$\yobar$};
    \node(c1) at (3,1.5) {$\xobar$};
    \node(c2) at (0,-1.5) {$\yubar$};
    \node(c3) at (3,-1.5) {$\xubar$};
    \node(d1) at (0.8,-0.2) {$\ftdar$};
    \node(d2) at (2.2,0.2) {$\gtdar$};
    \node(e1) at (1.3,-2) {};
    \node(e2) at (1.7,-1.2) {};
    \node(e3) at (1.3,1) {};
    \node(e4) at (1.7,1.8) {};
    \draw[->] (c2) to[bend right=40] node(f1){} node[below]{$\fubar$} (c3);
    \draw[->] (c2) to[bend left=30] node(g1){} node[above]{$\gubar$} (c3);
    \draw[->] (c0) to node(y){} node[left]{$y$} (c2);
    \draw[->] (c0) to[bend right=40] node(f2){} node[below]{$\fobar$} (c1);
    \draw[->] (c0) to[bend left=30] node(g2){} node[above]{$\gobar$} (c1);
    \draw[->] (c1) to node(x){} node[right]{$x$} (c3);
    \draw[double,double equal sign distance,-implies, shorten <=2, shorten >=2]
      (e1) to node[left]{$\alubar$} (e2);
    \draw[double,double equal sign distance,-implies, shorten <=2, shorten >=2]
      (e3) to node[left]{$\alobar$} (e4);
    \end{tikzpicture}
  \end{equation*}
  Composition of $1$-cells $k\maps z \to y$ and $f\maps y \to x$ is given by pasting them together,
  more explicitly it is given by
  $f k \eqdef \lang \fobar \oo \kobar, \ftri \odot \ktri ,\fubar \oo \kubar \rang$
  where $\ftri \odot \ktri\eqdef(\fubar \dot \ktri) \oo (\ftri \dot \kobar)$.
  Vertical composition of $2$-cells consists of vertical composition of upper and lower 2-cells.
  Similarly, horizontal composition of $2$-cells consists of horizontal composition
  of upper and lower 2-cells.
  Identity $1$-cells and $2$-cells are defined trivially.
\end{defn}

This is a particular case of our more general
Construction~\ref{cons:display-sub-2-cat}.
$\GTopos$ is $\KK_{\CD}$ when $\KK$ is $\ETopos$ and $\CD$ is the class of bounded geometric morphisms.

Much of our development will turn on the codomain 2-functor
\[
  \ccod \maps \GTopos \to \ETopos
  \text{.}
\]
It is important to note that this codomain functor is \emph{not} a fibration
in any 2-categorical sense,
as it is not well behaved with respect to arbitrary 2-cells in $\ETopos$.
This is easy to see if one takes the point of view of indexed 2-categories,
and considers the corresponding change-of-base functors.
(It becomes a fibration if one restricts the downstairs 2-cells to be isos,
as in~\cite{vickers:au-top}.)
However, it will still be interesting to consider its cartesian 1-cells and 2-cells,
which we do in \textsection\ref{sec:cartesianness}.

%%%%%%%%%
%%%%%%%%%
\section{Background: The 2-category \texorpdfstring{$\Con$}{Con} of contexts}
\label{sec:2-category of contexts}
% \texorpdfstring gives a latex form to use in the section heading itself,
%   and a text form to use in pdf section summaries if we can use the hyperref package.
%%%%%%%%%
The observation underlying~\cite{vickers:sk-au}
is that important geometric theories can be expressed
in coherent logic (no infinite disjunctions),
provided that new sorts can be constructed in a
type-theoretic style that includes free algebra
constructions.
Models can then be sought in any arithmetic universe
(list-arithmetic pretopos),
and that includes any elementary topos with nno;
moreover, the inverse image functors of
geometric morphisms are AU-functors.

In the following table we illustrates some of the differences between the AU approach and toposes. More details about the expressive power of AUs can be found in~\cite{mv:ind-au}.

\begin{figure}[H]
\centering
\begin{tabular} { |c|c|c| }
\hline
  & Arithmetic Universes & Grothendieck toposes \\
\hline
Classifying category & $\AUpres{\thT}$ &  $\CS[\thT]$ \\
\hline
$\thT_1 \to \thT_2$ & $\AUpres{\thT_2} \to \AUpres{\thT_1}$ & $\CS[\thT_1] \to \CS[\thT_2]$   \\
\hline
Base & Base independent &  Base $\CS$ \\
\hline
Infinities & Intrinsic; provided by $\qeql$  & Extrinsic; got from $\CS$     \\
& e.g. $N = \qeql(1)$ & e.g. infinite coproducts  \\
\hline
Results & A single result in AUs & A family of results by    \\
& &  varying $\CS$\\
\hline
\end{tabular}
\end{figure}

The system developed in~\cite{vickers:sk-au}
expresses those geometric theories using
\emph{sketches}.
They are, first of all, finite-limit-finite-colimit
sketches.
Each has an underlying directed graph of \emph{nodes} and \emph{edges},
reflexive to show the identity $s(X)$ for each node $X$,
and with some triangles specified as commutative.
On top of that, certain nodes are specified as being terminal or initial,
and certain cones and cocones are specified as
being for pullbacks or pushouts.
In addition, there is a new notion of
\emph{list universal} to specify parameterized list
objects, together with their empty list and $\qeqlcons$ operations.
From these we can also construct, for example,
$\mathbb{N}$, $\mathbb{Z}$ and $\mathbb{Q}$.

A homomorphism of AU-sketches preserves all the structure:
it maps nodes to nodes, edges to edges, commutativities to commutativities
and universals to universals.

We shall need to restrict the sketches we deal with,
to our \emph{contexts}.
These are built up as \emph{extensions} of the empty sketch $\thone$,
each extension a finite sequence of \emph{simple extension} steps
of the following types: adding a new primitive node, adding a new edge, adding a commutativity, adding a terminal, adding an initial, adding a pullback universal, adding a pushout, and adding a list object.

The following is an example of simple extension by adding a pullback universal.

\begin{ex}
Suppose $\thT$ is a sketch that already contains data in the form of a cospan of edges:
    $\xymatrix{
      {}
        \ar@{->}[r]^{u_1}
      & {}
      & {}
        \ar@{->}[l]_{u_2}
    }$.
Then we can make a simple extension of $\thT$ to $\thT'$
by adding a pullback universal for that cospan,
a cone in the form
\[
  \xymatrix{
        {\mathsf{P}}
          \ar@{.>}[r]^{\mathsf{p}^2}_{\bullet}
          \ar@{.>}[dr]^{\mathsf{p}}
          \ar@{.>}[d]_{\mathsf{p}^1}
        & {}
          \ar@{->}[d]^{u_2}
        \\
        {}
          \ar@{->}[r]_{u_1}^{\bullet}
        & {}
      }
\]
Along with the new universal itself,
we also add a new node $\mathsf{P}$, the pullback;
four new edges (the projections $\mathsf{p}^1,\mathsf{p}^2,\mathsf{p}$
and the identity for $\mathsf{P}$)
and two commutativities
$u_1 \mathsf{p}^1 \skdiag \mathsf{p}$ and
$u_2 \mathsf{p}^2 \skdiag \mathsf{p}$.
\end{ex}

An important feature of extensions is that the subjects of the universals
(for instance, $\mathsf{P}$ and the projections in the above example)
must be \emph{fresh} -- not already in the unextended sketch.
This avoids the possibility of giving a single
node two different universal properties,
and allows the property that every non-strict
model has a canonical strict isomorph.

The next fundamental concept is the notion of \emph{equivalence extension}.
This is an extension that can be expressed in a sequence of
steps for which each introduces structure that must be present, and uniquely,
given the structure in the unextended sketch.
Unlike an ordinary extension, we cannot arbitrarily add nodes,
edges or commutativities -- they must be justified.
Examples of equivalence extensions are to add composite edges;
commutativities that follow from the rules of category theory;
pullbacks, fillins and uniqueness of fillins,
and similarly for terminals, initials, pushouts and list objects;
and inverses of edges that must be isomorphisms by the rules of pretoposes.
Thus the presented AUs for the two contexts are isomorphic.

The previous example, of adding a pullback universal,
is already an equivalence extension.
Another is that of adding a fillin edge.
Suppose we have a pullback universal as above,
and we also have a commutative square
\[
\xymatrix{
        {}
          \ar@{->}[d]_{v_1}
          \ar@{->}[dr]^{v}
          \ar@{->}[r]^{v_2}_{\bullet}
        & {}
          \ar@{->}[d]^{u_2}
        \\
        {}
          \ar@{->}[r]_{u_1}^{\bullet}
        & {}
   }
\]
Then as an equivalence extension we can add a fillin edge
$\xymatrix{{} \ar@{.>}[r]^{w} &{\mathsf{P}}}$,
with commutativities
$\mathsf{p}^1 w \skdiag v_1$ and
$\mathsf{p}^2 w \skdiag v_2$.

Similarly, if we have two fillin candidates with the appropriate commutativities,
then as an equivalence extension we can add a commutativity to say that the fillins are equal.

Any sketch homomorphism between contexts gives a model reduction map
(in the reverse direction),
but those are much too rigidly bound to the syntax to give us a good general
notion of model map.
We seek something closer to geometric morphisms,
and in fact we shall find a notion of \emph{context map}
that captures exactly the strict AU-functors between the corresponding
arithmetic universes $\AUpres{\thT}$.
A \emph{context map} $H\maps\thT_0 \to \thT_1$ is a sketch homomorphism
from $\thT_1$ to some equivalence extension $\thT'_0$ of $\thT_0$.

Each model $M$ of $\thT_0$ gives
-- by the properties of equivalence extensions --
a model of $\thT'_0$,
and then by model reduction along the sketch homomorphism
it gives a model $M\dot H$ of $\thT_1$.

Thus context maps embody a localization by which
equivalence extensions become invertible.
Of course, every sketch homomorphism is, trivially,
a map in the reverse direction.
Context extensions are sketch homomorphisms, and the corresponding maps
backwards are \emph{context extension maps}.
They have some important properties, which we shall see in the next section.

At this point let us introduce the important example of the \emph{hom context}
$\arrw{\thT}$ of a context $\thT$.
We first take two disjoint copies of $\thT$ distinguished by subscripts 0 and 1,
giving two sketch homomorphisms
$i_0,i_1 \maps \thT \to \thT^{\to}$.
Second, for each node $X$ of $\thT$, we adjoin an edge $\theta_X \maps X_0 \to X_1$.
Also, for each edge $u \maps X \to Y$ of $\thT$,
we adjoin a connecting edge $\theta_u \maps X_0 \to Y_1$ together with two commutativities:
\[
  \xymatrix{
    {X_0}
      \ar@{->}[r]^{\theta_X}_{\bullet}
      \ar@{->}[dr]^{\theta_u}
      \ar@{->}[d]_{u_0}
    & {X_1}
      \ar@{->}[d]^{u_1}
    \\
    {Y_0}
      \ar@{->}[r]_{\theta_Y}^{\bullet}
    & {Y_1}
  }
\]
A model of $\arrw{\thT}$ comprises a pair $M_0,M_1$ of models of $\thT$, together with a homomorphism $\theta\maps M_0 \to M_1$.

We define a \emph{2-cell} between maps $H_0,H_1\maps \thT_0 \to \thT_1$
to be a map from $\thT_0$ to $\arrw{\thT_1}$ that composes with the maps
$i_0,i_1\maps \arrw{\thT_1}\to\thT_1$ to give $H_0$ and $H_1$

Finally, an \emph{objective equality} between context maps $H_0$ and $H_1$
is a 2-cell for which the homomorphism between strict models must always be an identity.
This typically arises when a context introduces the same universal construction
twice on the same data.

From these definitions we obtain a 2-category $\Con$ whose
0-cells are contexts,
1-cells are context maps modulo objective equality,
and 2-cells are 2-cells.
It has all PIE-limits (limits constructible from products, inserters, equifiers).
Although it does not possess all (strict) pullbacks of arbitrary maps,
it has all (strict) pullbacks of context extension maps along any other map.

We now list some of most useful example of contexts. For more examples see~\cite[\textsection 3.2]{vickers:sk-au}.

\begin{ex}
\label{ex:pointed-set}
The context $\thob$ has nothing but a single node, $X$, and an identity edge $s(X)$ on $X$.
A model of $\thob$ in an AU (or topos) $\CA$ is a ``set''
in the broad sense of an object of $\CA$,
and so $\thob$ plays the role of the object classifier in topos theory.
There is also a context $\thob[pt]$ which in addition to the generic node $X$
has another node $1$ declared as terminal,
and moreover, it has an edge $x\maps 1 \to X$.
(This is the effect of adding a generic point to the context $\thob$.)
Its models are the pointed sets.
This time we must distinguish between strict and non-strict models.
In a strict model, 1 is interpreted as \emph{the canonical} terminal object.

There is a context extension map $U\maps \thob[pt] \to \thob$
which corresponds to the sketch inclusion in the opposite direction,
sending the generic node in $\thob$ to the generic node in $\thob[pt]$.
As a model reduction, $U$ simply forgets the point.
%\end{ex}

%\begin{ex} \label{ex:pointed-set}
The context $\arrw{\thob}$ comprises two nodes $X_0$ and $X_1$ and their identities, and and an edge $\theta_X\maps X_0 \to X_1$.
A model of $\arrw{\thob}$ in an AU (or topos) $\CA$ is exactly a morphism in $\CA$.
We can define the diagonal context map $\pi_{\thT}\maps \thT \to \arrw{\thT}$ by the opspan $(\id,F)$ of sketch morphisms where $F$ sends edges $\theta_X$ to $s(X)$, $\theta_u$ to $u$ and commutativities to degenerate commutativities of the form $u s(X) \skdiag u$ and $s(Y) u \skdiag u$.
\end{ex}

We outline two more important examples.
We do not have space here to give full details as sketches.
Rather, our aim is to explain why the known geometric theories can be expressed
as contexts.

\newcommand{\Tor}{\mathrm{Tor}}
\begin{ex} \label{ex:torsors}
  Let $\thT_0=[C\colon\mathrm{Cat}]$ be the theory of categories.
  It includes nodes $C_0$ and $C_1$, primitive nodes introduced for the objects of
  objects and of morphisms;
  edges $d_0, d_1\maps C_1\to C_0$ for domain and codomain and an edge for identity morphisms;
  another node $C_2$ for the object of composable pairs and introduced as a pullback;
  an edge $c\maps C_2\to C_1$ for composition;
  and various commutativities for the axioms of category theory.
  The technique is general and would apply to any finite cartesian theory --
  this should be clear from the account in~\cite{pv:PHLCC}.

  Now let us define the extension $\thT_1=[C\colon\mathrm{Cat}][F\colon\Tor(C)]$,
  where $\Tor(C)$ denotes the theory of torsors (flat presheaves) over $C$.
  The presheaf part is expressed by the usual procedure for internal presheaves.
  We declare a node $F_0$ with an edge $p\maps F_0\to C_0$,
  and let $F_1$ be the pullback along $d_0$.
  Then the morphism part of the presheaf defines $xu$ if $d_0(u)=p(x)$,
  and this is expressed by an edge from $F_1$ to $F_0$
  over $d_1$ satisfying various conditions.
  In fact this is another cartesian theory.

  The flatness conditions are not cartesian, but are still expressible using contexts.
  First we must say that $F_0$ is non-empty:
  the unique morphism $F_0\to 1$ is epi,
  in other words the cokernel pair has equal injections.
  Second, if $x,y\in F_0$ then there are $u,v,z$ such that $x=zu$ and $y=zv$.
  Third, if $xu=xv$ then there are $w,z$ such that $x=zw$ and $wu=wv$.
  Again, these can be expressed by saying that certain morphisms are epi.

  Now we have a context extension map $U\maps \thT_1\to\thT_0$,
  which forgets the torsor.

  $\thT_0$ and $\thT_1$, like all contexts, are finite.
  In \textsection\ref{sec:classifying-topos-of-ctx} we shall see how
  for an infinite category $C$ we can still access the infinite theory
  $\Tor(C)$ (infinitely many sorts and axioms, infinitary disjunctions)
  as the ``fibre of $U$ over $C$''.
\end{ex}

\begin{ex} \label{ex:SpecL}
  Let $\thT_0=[L\colon\mathrm{DL}]$ be the finite algebraic theory of distributive lattices,
  a context.
  Now let $\thT_1=[L\colon\mathrm{DL}][F\colon\mathrm{Filt}(L)]$
  be the theory of distributive lattices $L$ equipped with prime filters $F$,
  and let $U\maps\thT_1\to\thT_0$ be the corresponding extension map.
  $\thT_1$ is built over $\thT_0$ by adjoining a node $F$
  with a monic edge $F\to L$,
  and conditions to say that it is a filter
  (contains top and is closed under meet)
  and prime (inaccessible by bottom and join).
  For example, to say that bottom is not in $F$,
  we say that the pullback of $F$ along bottom as edge $1\to L$ is isomorphic to the
  initial object.

  Given a model $L$ of $\thT_0$, the fibre of $U$ over $L$ is its spectrum $\Spec(L)$.
\end{ex}

One central issue for models of sketches is that of \emph{strictness}. The standard sketch-theoretic notion of models is non-strict: for a universal, such as a pullback of some given opspan, the pullback cone can be interpreted as any pullback of the opspan. Contexts give us good handle over strictness. The following result appears in~\cite[Proposition 1]{vickers:au-top}:

\begin{rem}
\label{lift-of-strict-models}
  Let $U\maps\thT_1\to\thT_0$ be an extension map in $\Con$,
  that is to say one got from extending $\thT_0$ to $\thT_1$.
  Suppose in some AU $\CA$ we have a model $M_1$ of $\thT_1$,
  a strict model $M'_0$ of $\thT_0$,
  and an isomorphism $\phi\maps M'_0 \iso M_1 U$.
  \[
    \xymatrix{
      {\thT_1}
        \ar@{->}[d]_{U}
      & {M'_1}
        \ar@{.>}[r]^{\widetilde{\phi}}_{\iso}
        \ar@{.}[d]
      & {M_1}
        \ar@{.}[d]
      \\
      {\thT_0}
      & {M'_0}
        \ar@{->}[r]^{\phi}_{\iso}
      & {M_1 U}
    }
  \]

Then there is a unique model $M'_1$ of $\thT_1$ and isomorphism $\tilde{\phi}\maps M'_1 \iso M_1$ such that

\begin{enumerate}[label=(\roman*)]
  \item
    $M'_1$ is strict,
  \item
    $M'_1 U = M'_0$,
  \item
    $\tilde{\phi} U = \phi$, and
\item
    $\tilde{\phi}$ is equality on all the primitive nodes used in extending $\thT_0$ to $\thT_1$.
\end{enumerate}
We call $M'_1$ the
\emph{canonical strict isomorph} of $M_1$ along $\phi$.
\end{rem}

The fact that we can uniquely lift strict models to strict models as in the remark above will be crucial in \textsection \ref{sec:classifying-topos-of-ctx} and \textsection \ref{sec:main-results}.

%%%%%%%%%%%%%%%%%%%%%%%%%%%
%The connection to contexts
%%%%%%%%%%%%%%%%%%%%%%%%%

\section{Background: Classifying toposes of contexts in \texorpdfstring{$\GTopos$}{GTop}} \label{sec:classifying-topos-of-ctx}

In this part, we shall review how~\cite{vickers:au-top} exploits the fact that,
for any geometric morphism $f\maps\CE\to\CF$ between elementary toposes with nno,
the inverse image functor $\str{f}$ is an AU-functor.
It preserves the finite colimits and finite limits immediately from the definition,
and the preservation of list objects follows quickly from their universal property and
the adjunction of $f$.

By straightforwardly applying $\str{f}$ we transform
a model of $M$ of a context $\thT$ in $\CF$ to a model in $\CE$.
However, we shall be interested in \emph{strict} models,
and $\str{f}$ is in general non-strict as an AU-functor.
For this reason we reserve the notation $\str{f}M$ for the canonical strict isomorph
of the straightforward application, which we write $\str{f}\dot M$.
By this means, the 1-cells of $\ETopos$ act strictly
on the categories of strict  $\thT$-models.
This extends to 2-cells.
If we have $f,g\maps\CE\to\CF$ and $\alpha\maps f \To g$,
then we get a homomorphism $\str{\alpha}M\maps\str{f}M\to\str{g}M$.

It will later be crucial to know how $\str{(-)}$ interacts with transformation of models by context maps.
Given a context map $H\maps \thT_1 \to \thT_0$,
the models $\str{f}(M\dot H)$ and $(\str{f}M)\dot H$ are isomorphic but not always equal.
For instance, take $H\maps \thob[pt] \to \thob$ to be the non-extension context map
that sends the generic node of $\thob$ to the terminal node in $\thob[pt]$,
and $M$ a strict model of $\thob[pt]$.
However,~\cite[Lemma 9]{vickers:au-top} demonstrates that if $H$ is an extension map, then they are indeed equal.

One step further is to investigate the action of $1$-cells and $2$-cells in $\GTopos$ on strict models of context extensions.

\begin{defn}
\label{defn:strict model of context extension}
Let $U\maps \thT_1 \to \thT_0$ be a context extension map
and $p\maps \pobar \to \pubar$ a geometric morphism.

Then a \strong{strict model} of $U$ in $p$
is a pair $M=(\Mobar,\Mubar)$ where $\Mubar$ is a strict $\thT_0$-model in $\pubar$
and $\Mobar$ is a strict $\thT_1$-model in $\pobar$
such that $\Mobar \dot U= \str{p} \Mubar$.

A \strong{$U$-morphism of models}
$\varphi\maps M \to M'$
is a pair $(\fiobar, \fiubar)$ where
$\fiobar\maps\Mobar\to\Mobar'$ and $\fiubar\maps\Mubar\to\Mubar'$
are homomorphisms of $\thT_1$- and $\thT_0$-models
such that $\fiobar \dot U = \str{p} \fiubar$.

Strict $U$-models and $U$-morphisms in $p$ form a category $\model{U}{p}$.
\end{defn}

\begin{cons}
\label{action-of-topos-1-cells-on-models-of-extensions}
Suppose $f\maps q \to p$ is a $1$-cell in $\GTopos$
and let $M$ be a model of $U$ in $p$.
We define a model $\str{f}M$ of $U$ in $q$,
with downstairs part $\str{\fubar}\Mubar$,
as follows.

$\str{\ftri} \Mubar$ provides us with an isomorphism of $\thT_0$-models in $\qobar$
and $(\str{\fobar}\Mobar)\dot U = \str{\fobar}(\Mobar\dot U) = \str{\fobar}\str{p}\Mubar$.
We define the isomorphism
$\str{\ftri} \Mobar \maps \str{\fobar}\Mobar
  \to \str{f}\Mobar$
to be the canonical strict isomorph of
$\str{\fobar}\Mobar$ along $\str{\tri{f}}\Mubar$,
and then $\str{f}M\eqdef (\str{f}\Mobar, \str{\fubar}\Mubar)$
is a strict model of $U$ in $q$.

The construction extends to $U$-model homomorphisms
$\varphi\maps M \to M'$,
as in the diagram on the left.
\begin{equation*}
%\label{action-of-1-cell-on-U}
  \begin{tikzcd}[row sep=scriptsize, column sep=small]
    & \str{\fobar}\Mobar'
      \arrow[rr,dotted,"\str{\ftri} \Mobar'" description]
      \arrow[dd,mapsto]
    & & \str{f}\Mobar'
      \arrow[dd,mapsto]
    \\
    \str{\fobar}\Mobar
      \arrow[ur,"\str{\fobar}\fiobar"]
      \arrow[rr, dotted, crossing over, pos=0.7, "\str{\ftri} \Mobar" description]
      \arrow[dd,mapsto]
    & & \str{f}\Mobar
      \arrow[ur, dotted, "\str{f}\fiobar"]
    \\
    & \str{f}\str{p}\Mubar'
      \arrow[rr, pos=0.25, "\str{\ftri} \Mubar'" description]
    && \str{q} \str{\fubar}\Mubar'
    \\
    \str{\fobar}\str{p}\Mubar
      \arrow[ur,"\str{\fobar}\str{p}\fiubar"]
      \arrow[rr,"\str{\ftri} \Mubar" description]
    & & \str{q} \str{\fubar}\Mubar
      \arrow[ur,swap, "\str{q}\str{\fubar}\fiubar"]
      \arrow[from=uu, mapsto, crossing over]
    \\
  \end{tikzcd}
  \quad
  \begin{tikzcd}[row sep=scriptsize, column sep=small]
    & \str{\gobar}\Mobar
      \arrow[rr,"\str{\gtri} \Mobar" description]
      \arrow[dd,mapsto]
    & & \str{g}\Mobar
      \arrow[dd,mapsto]
    \\
    \str{\fobar}\Mobar
      \arrow[ur,"\str{\alobar}\Mobar"]
      \arrow[rr, crossing over, pos=0.7, "\str{\ftri} \Mobar" description]
      \arrow[dd,mapsto]
    & & \str{f}\Mobar
      \arrow[ur, dotted, "\str{\alpha}\Mobar"]
    \\
    & \str{g}\str{p}\Mubar
      \arrow[rr, pos=0.25, "\str{\gtri} \Mubar" description]
    && \str{q} \str{\gubar}\Mubar
    \\
    \str{\fobar}\str{p}\Mubar
      \arrow[ur,"\str{\alobar}\str{p}\Mubar"]
      \arrow[rr,"\str{\ftri} \Mubar" description]
    & & \str{q} \str{\fubar}\Mubar
      \arrow[ur,swap, "\str{q}\str{\alubar}\Mubar"]
      \arrow[from=uu, mapsto, crossing over]
    \\
  \end{tikzcd}
\end{equation*}
This can be encapsulated in the functor
\[
  \model{U}{f}\maps \model{U}{p} \to \model{U}{q}
  \text{, }
  M \mapsto \str{f}M
  \text{.}
\]
By the properties of the canonical strict isomorph,
it is strictly functorial with respect to $f$.
Furthermore, if $\alpha\maps f \To g$ is a $2$-cell in $\GTopos$,
then the bottom square in the above right-hand diagram commutes
and we define $\str{\alpha}\Mobar$ to be the unique
$\thT_1$-model morphism which completes the top face to a commutative square.
We may also write $\str{f}\Mubar$ and $\str{\alpha}\Mubar$
for $\str{\fubar}\Mubar$ and $\str{\alubar}\Mubar$.
\end{cons}

The upshot is that each $2$-cell $\alpha\maps f \To g$ in $\GTopos$
gives rise to a natural transformation
$\model{U}{\alpha}$ between functors $\model{U}{f}$
and $\model{U}{g}$ and
$(\model{U}{\alpha})(M) = \str{\alpha}M$.
Hence $\model{U}{()}$ is actually a $2$-functor.

\begin{prop}
\label{2-functor-U-Mod}
$\model{U}{()} \maps \op\GTopos \to \Cat$ is a strict $2$-functor.
\end{prop}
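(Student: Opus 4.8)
The plan is to note that all the data of $\model{U}{(-)}$ has already been produced in Construction~\ref{action-of-topos-1-cells-on-models-of-extensions}: a category $\model{U}{p}$ for each $0$-cell $p$; a functor $\model{U}{f}\maps\model{U}{p}\to\model{U}{q}$ for each $1$-cell $f\maps q\to p$ (the reversal on $1$-cells matches $\op\GTopos$, while $2$-cells keep their direction); and, for each $2$-cell $\alpha\maps f\To g$, a natural transformation $\model{U}{\alpha}$ with components $\str{\alpha}M$. What is left is to verify the strict $2$-functoriality axioms: (a) $\model{U}{\id_p}=\id_{\model{U}{p}}$; (b) $\model{U}{fk}=\model{U}{k}\circ\model{U}{f}$ for composable $1$-cells; (c) for each pair $q,p$, the assignment $\alpha\mapsto\model{U}{\alpha}$ is a functor $\GTopos(q,p)\to\Cat(\model{U}{p},\model{U}{q})$, i.e.\ it sends identity $2$-cells to identities and respects vertical composition; and (d) it respects whiskering and horizontal composition of $2$-cells. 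The single recurring device is that every object and morphism occurring here is pinned down by a universal property --- the canonical strict isomorph of Remark~\ref{lift-of-strict-models} at the level of models, and the ``unique morphism completing the square'' of Construction~\ref{action-of-topos-1-cells-on-models-of-extensions} at the level of morphisms --- so each required equation will hold by uniqueness. The two imported facts are from~\cite{vickers:au-top}: that the strictified action $\str{(-)}$ of geometric morphisms and transformations on strict $\thT_0$- and $\thT_1$-models is strictly $2$-functorial (``the $1$-cells of $\ETopos$ act strictly'', extended to $2$-cells), and that, $U$ being an extension map, $\str{h}(N\dot U)=(\str{h}N)\dot U$ for any inverse-image functor $\str{h}$ (\cite[Lemma~9]{vickers:au-top}).

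For (a), the identity $1$-cell $\id_p$ has identity components and identity connecting isomorphism; applying an identity inverse-image functor leaves a strict model (or homomorphism) unchanged, and the canonical strict isomorph of a strict model along an identity isomorphism is that model itself, so $\model{U}{\id_p}$ is the identity on objects, and the cube of Construction~\ref{action-of-topos-1-cells-on-models-of-extensions} then collapses to show it is the identity on morphisms too. For (b), the downstairs parts agree on the nose because $\str{(-)}$ is strictly functorial on $\thT_0$-models. Upstairs, both $\str{fk}\Mobar$ and $\str{k}(\str{f}\Mobar)$ are strict $\thT_1$-models; unfolding the construction --- using strict functoriality of $\str{(-)}$ on $\thT_1$-models, the identification of the relevant $U$-reducts via \cite[Lemma~9]{vickers:au-top}, and the pasting formula $\ftri\odot\ktri$ for the connecting isomorphism of $fk$ --- one checks that they are canonical strict isomorphs of the same $\thT_1$-model along the same isomorphism, hence equal by the uniqueness in Remark~\ref{lift-of-strict-models}; the same argument handles $U$-morphisms, giving $\model{U}{fk}=\model{U}{k}\circ\model{U}{f}$.

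For (c), the component $\str{\alpha}\Mobar$ is by definition the unique $\thT_1$-model morphism completing a certain square; the identity completes the square for $\id_f$, and $\str{\beta}\Mobar\circ\str{\alpha}\Mobar$ completes the square attached to $\beta\cdot\alpha$, so uniqueness gives $\model{U}{\id_f}=\id$ and $\model{U}{\beta\cdot\alpha}=\model{U}{\beta}\cdot\model{U}{\alpha}$; naturality of each $\model{U}{\alpha}$ was already recorded in the text, once more by uniqueness. For (d), whiskerings and horizontal composites of $2$-cells in $\GTopos$ are formed componentwise, and the matching equations for $\model{U}{(-)}$ reduce again to uniqueness of the completing morphism together with strict $2$-functoriality of $\str{(-)}$ on each $\thT_i$-model category.

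I expect the only real work to be in step (b) (and the horizontal-composition part of (d)), where two layers of strictification interact: the one already folded into the notation $\str{\fobar}\Mobar$, and the further strictification along $\str{\ftri}\Mubar$ that produces $\str{f}\Mobar$. The delicate point is bookkeeping --- identifying exactly which model is strictified along which isomorphism on each side before Remark~\ref{lift-of-strict-models} can be applied --- and once that is set up, nothing beyond the universal property of the canonical strict isomorph is needed.
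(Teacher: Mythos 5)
Your proposal is correct and follows essentially the same route as the paper, which gives no separate proof beyond Construction~\ref{action-of-topos-1-cells-on-models-of-extensions} and the remark that strict functoriality follows ``by the properties of the canonical strict isomorph''. Your more detailed verification of (a)--(d) via the uniqueness clauses of Remark~\ref{lift-of-strict-models} and of the square-completing morphism, together with the imported strictness facts from \cite{vickers:au-top}, is exactly the argument the paper leaves implicit.
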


A main purpose of~\cite{vickers:au-top} is to explain how a context extension map
$U\maps\thT_1\to\thT_0$ may be thought of as a \emph{bundle},
each point of the base giving rise to a space, its fibre.
In terms of toposes, a point of the base $\thT_0$ is a model $M$ of $\thT_0$ in some
elementary topos $\CS$.
Then the space is a Grothendieck topos over $\CS$,
in other words a bounded geometric morphism.
It should be the classifying topos for a theory $\thT_1/M$ of models of $\thT_1$
that reduce to $M$.

\cite{vickers:au-top} describes $\thT_1/M$ using the approach it calls ``elephant theories'',
namely that set out in~\cite[B4.2.1]{johnstone:elephant1}.
An elephant theory over $\CS$ specifies the category of models of the theory in every
bounded $\CS$-topos $q\maps\CE\to\CS$, together with the reindexing along geometric morphisms.
Then $\thT_1/M$ is defined by letting
$\model{\thT_1/M}{\CE}$ be the category of strict models of $\thT_1$ in $\CE$ that reduce by $U$ to $\str{q}M$.

The extension by which $\thT_1$ was built out of
$\thT_0$ shows that the elephant theory $\thT_1/M$,
while not itself a context,
is geometric over $\CS$ in the sense of~\cite[B4.2.7]{johnstone:elephant1},
and hence has a classifying topos
$p\maps\CS[\thT_1/M]\to\CS$,
with generic model $G$, say.
Its classifying property is that for each bounded $\CS$-topos $\CE$ we have
an equivalence of categories
\[
%\label{classifying-based-topos-eq}
  \Phi\maps \bigslant{\BTopos}{\CS} \ (\CE, \CS[\thT_1/M]) \eqv \model{\thT_1/M}{\CE}
\]
defined as $\Phi(f)\eqdef \str{f} G$.

\begin{ex}
Consider the (unique) context map $!$ from $\thob$
to the empty context $\thone$.
In any elementary topos $\CS$ there is unique model $!$ of $\thone$,
and the classifier for $\thob/!$ is the object
classifier over $\CS$,
the geometric morphism $[\Finset, \CS]\to\CS$
where $\Finset$ here denotes the category
of finite sets as an internal category in $\CS$,
its object of objects being the nno $N$.
The generic model of $\thob$ in $[\Finset,\CS]$
is the inclusion functor
$\inc\maps \Finset \into \Set$.
As an internal diagram it is given by the second
projection of the order $<$ on $N$,
since $\{m\mid m<n\}$ has cardinality $n$.
Given an object $M$ of $\CS$,
The classifying topos for $\thob[pt]/M$ is
the slice topos $\CS/M$.
Hence the classifying topos of $\thob[pt]$ is the slice topos $[\Finset, \CS]/\inc$.
The generic model of $\thob[pt]$ in $[\Finset, \CS]/\inc$ is the pair $(\inc, \pi\maps \inc \to \inc \times \inc)$ where $\Delta$ is the diagonal transformation which renders the diagram below commutative:

\[
  \begin{tikzcd}[row sep=3em]
    \inc
      \arrow{rr}{\Delta}
      \arrow[""{name=foo}]{dr}[swap]{\id} 
    && \inc \times \inc
      \arrow[dl, "\pi_2"]
    \\
    & \inc
  \end{tikzcd}
\]

\end{ex}

So far the discussion of $p$ as classifier has been firmly anchored to $\CS$ and $M$,
but notice that $(G,M)$ is a model of $U$ in $p$.
We now turn to discussing how it fits in more generally with $\model{U}{()}$
by spelling out the properties of $p$ as classifying topos that are shown
in~\cite{vickers:au-top}.
The main result there, Theorem~31, says that $P$ is ``locally representable'' over $Q$
in the following fibration tower.
\newcommand{\PQE}{\GTopos_{\iso}\text{-}U}
\newcommand{\PQC}{\GTopos_{\iso}\text{-}(\thT_0\skext\thT_0)}
\newcommand{\PQB}{\ETopos_{\iso}\text{-}\thT_0}
\[
  \begin{tikzcd}
    \co{(\PQE)}
      \ar[r, "P"]
    & \co{(\PQC)}
      \ar[r, "Q"]
    & \co{(\PQB)}
  \end{tikzcd}
\]
There is a slight change of notation from~\cite{vickers:au-top}.
$\GTopos$ there, unlike ours, restricts the 2-cells to be isomorphisms downstairs.
This is needed to make $P$ and $Q$ fibrations.
To emphasize the distinction we have written $\GTopos_{\iso}$ above.

The objects of $\PQE$ are pairs $(q,N)$ where
$q\maps\qobar\to\qubar$ is a bounded geometric morphism
and $N=(\Nobar,\Nubar)$ is a model of $U$ in $q$.
A 1-cell from $(q_0,N_0)$ to $(q_1,N_1)$
is a triple $(f,f^{-},f_{-})$ such that $f\maps q_0 \to q_1$ in $\GTopos$,
$(f^{-},f_{-})\maps N_0 \to \str{f}N_1$
is a homomorphism of $U$-models,
\emph{and $f_{-}$ is an isomorphism}.
It is $P$-cartesian iff $f^{-}$ too is an isomorphism.
A 2-cell is a 2-cell $\alpha\maps f\To g$ in $\GTopos_{\iso}$ ($\alubar$ an iso)
such that $\str{\alpha}N_1\circ(f^{-},f_{-}) = (g^{-},g_{-})$.

$\PQC$ is similar, but without the $\Nobar$s and $f^{-}$s.

Let us now unravel the local representability.
It says that for each $(\CS,M)$ in $\PQB$ there is a classifier
$(p\maps\CS[\thT_1/M]\to\CS, (G, M))$ in $\PQE$,
where $G$ is the generic model of $\thT_1/M$.

\begin{prop} \label{prop:classifier}
  \cite[Proposition~19]{vickers:au-top}
  The properties that characterize $p$ as classifier are equivalent to the following.
  \begin{enumerate}[label=(\roman*)]
  \item
    For every object $(q, N)$ of $\PQE$,
    1-cell $\fubar\maps \qubar \to \pubar$ in $\ETopos$
    and isomorphism $f_{-}\maps \Nubar \to \str{\fubar}M$,
    there is a $P$-cartesian 1-cell
    $(f,f^{-},f_{-})\maps (q,N)\to (p,(G,M))$
    over $(\fubar,f_{-})$.
    In other words, there is $f$ over $\fubar$
    and an isomorphism ($P$-cartesianness) $f^{-}\maps \Nobar \iso \str{f}G$ over $f_{-}$.
  \item
    Suppose $(f,f^{-},f_{-}), (g,g^{-},g_{-})\maps (q,N) \to (p,(G,M))$ in $\PQE$,
    with $(g,g^{-},g_{-})$ being $P$-cartesian ($g^{-}$ is an iso).
    Suppose also we have $\alubar\maps\gubar\To\fubar$ so that
    $\str{\alubar}M$ commutes with $f_{-}$ and $g_{-}$.
    (Note the reversal of 2-cells compared with \cite[Proposition~19]{vickers:au-top}.
    This is because the fibration tower uses the 2-cell duals $\co{(\PQE)}$ \etc)
    Then $\alubar$ has a unique lift $\alpha\maps g \To f$ such that
    $(\str{\alpha}G)g^{-}=f^{-}$.
  \end{enumerate}
\end{prop}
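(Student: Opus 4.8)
The plan is to obtain both directions of the equivalence by unwinding the universal property of $p$ as classifying topos over $\CS$ — the natural equivalence $\Phi\maps\bigslant{\BTopos}{\CS}(\CE,\CS[\thT_1/M])\eqv\model{\thT_1/M}{\CE}$, $\Phi(f)=\str{f}G$, ranging over bounded $\CS$-toposes $\CE$ — against the fibrational language of $\PQE$. This is essentially the ``local representability'' statement \cite[Theorem~31]{vickers:au-top} retold in elementary terms (and with the $2$-cells reversed, as the passage to $\co{(-)}$ in the fibration tower demands), so the substance is a translation dictionary rather than new mathematics. The reduction rests on two facts recorded earlier: bipullbacks of the bounded morphism $p$ along arbitrary geometric morphisms exist in $\ETopos$ and stay bounded (\cite[B3.3.6]{johnstone:elephant1}), and canonical strict isomorphs of $\thT_1$-models over a fixed $\thT_0$-reduct are unique (Remark~\ref{lift-of-strict-models}). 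Concretely, for $\fubar\maps\qubar\to\CS$ form the bipullback $\CE$ of $p$ along $\fubar$, with projections $r\maps\CE\to\qubar$ (bounded) and $\pi\maps\CE\to\CS[\thT_1/M]$; its $1$-dimensional universal property identifies $\GTopos$-$1$-cells $f\maps q\to p$ with downstairs part $\fubar$ with $\bigslant{\BTopos}{\qubar}$-morphisms $h\maps\qobar\to\CE$ (note $h$ is bounded by \cite[B3.1.10(ii)]{johnstone:elephant1}, since $rh\iso q$, with $\fobar=\pi h$), and then the classifying property of $r$ over $\qubar$ — $r$ classifies the change of base of $\thT_1/M$ along $\fubar$, i.e.\ strict $\thT_1$-models in a $\qubar$-topos that reduce by $U$ to $\str{\fubar}M$, with generic model $\str{\pi}G$ — identifies these with the strict $\thT_1$-models in $\qobar$ reducing to $\str{q}(\str{\fubar}M)$, via $f\mapsto\str{f}G$ (up to the canonical strictification).

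For (i): given $(q,N)$ with $q\maps\qobar\to\qubar$ bounded, $\fubar\maps\qubar\to\CS$ and an isomorphism $f_{-}\maps\Nubar\iso\str{\fubar}M$, the model $\Nobar$ itself reduces to $\str{q}\Nubar$, not to $\str{q}(\str{\fubar}M)$, so we first transport it: from $\Nobar\dot U=\str{q}\Nubar$ and the isomorphism $\str{q}f_{-}\maps\str{q}\Nubar\iso\str{q}(\str{\fubar}M)$, Remark~\ref{lift-of-strict-models} yields a strict $\thT_1$-model $\Nobar'$ in $\qobar$ with $\Nobar'\dot U=\str{q}(\str{\fubar}M)$ and an isomorphism $\Nobar'\iso\Nobar$. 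Feeding $\Nobar'$ backwards through the correspondences of the previous paragraph produces $h\maps\qobar\to\CE$ over $\qubar$ with $\str{h}(\str{\pi}G)\iso\Nobar'$, hence a $\GTopos$-$1$-cell $f=\tricell{f}$ with $\fobar\eqdef\pi h$, with $\ftri$ the composite of the bipullback's structural isomorphism and the one carried by $h$, and with downstairs part $\fubar$. Defining $f^{-}$ as the composite $\Nobar\iso\Nobar'\iso\str{h}\str{\pi}G\iso\str{f}G$ (the last step the coherence isomorphism into the strictification) gives a $U$-model homomorphism $(f^{-},f_{-})\maps N\to\str{f}(G,M)$ over $(\fubar,f_{-})$, and it is $P$-cartesian precisely because $f^{-}$ is an isomorphism.

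For (ii): suppose $(f,f^{-},f_{-})$ and the $P$-cartesian $(g,g^{-},g_{-})$ are $1$-cells $(q,N)\to(p,(G,M))$ and $\alubar\maps\gubar\To\fubar$ has $\str{\alubar}M$ compatible with $f_{-}$ and $g_{-}$. The homomorphism $\str{\alpha}G$ is already forced: $g^{-}$ is an isomorphism, so $(\str{\alpha}G)g^{-}=f^{-}$ forces $\str{\alpha}G=f^{-}(g^{-})^{-1}\maps\str{g}G\to\str{f}G$, and the reduct clauses in the definition of strict $U$-model show this homomorphism sits over $\str{q}(\str{\alubar}M)$. What remains is that $f^{-}(g^{-})^{-1}$ really is $\str{\alpha}G$ for a \emph{unique} $2$-cell $\alpha=\langle\alobar,\alubar\rangle\maps g\To f$ in $\GTopos$ over $\alubar$ — equivalently, that it is induced by a unique $\alobar\maps\gobar\To\fobar$ compatible with $\alubar$ through the structure isomorphisms $\ftri$ and $\gtri$. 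This is supplied by the $2$-dimensional part of the bipullback universal property together with the fullness and faithfulness of the classifying equivalence over $\qubar$, i.e.\ by the local representability of $P$ \cite[Theorem~31]{vickers:au-top}. The converse implications are the special case $\qubar=\CS$, $\fubar=\id$: (i) then recovers essential surjectivity of $\Phi$ and (ii) its full faithfulness.

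The main obstacle I anticipate is not conceptual but the coherence-and-strictness bookkeeping. One must check that the several ways of manufacturing a strict $\thT_1$-model in $\qobar$ — pulling $G$ back along $\fobar=\pi h$, routing through $\CE$ and then $h$, or reindexing along $\fubar$ then $q$ — agree up to the canonical isomorphisms; and, wherever the definition of $\PQE$ or of $P$-cartesianness asks for an \emph{equality} of strict models rather than an isomorphism, one must use the uniqueness clause of Remark~\ref{lift-of-strict-models} to upgrade the relevant isomorphism to an identity. Part (ii) adds the subtlety of variance — the origin of the noted ``reversal of $2$-cells'' — and the task of verifying that a bare $\thT_1$-model homomorphism does assemble, via $\ftri$, $\gtri$ and the boundedness of $q$, into a genuine $\GTopos$ $2$-cell with the prescribed downstairs component $\alubar$.
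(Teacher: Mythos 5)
The paper offers no internal proof of this proposition: it is imported verbatim (modulo dualizing the 2-cells) from \cite[Proposition~19]{vickers:au-top}, so there is nothing in the text to compare your argument against line by line. That said, your route is the expected one and is sound in outline: form the bipullback of $p$ along $\fubar$, identify $1$-cells $q\to p$ over $\fubar$ with $\bigslant{\BTopos}{\qubar}$-morphisms into the bipullback, use Remark~\ref{lift-of-strict-models} to replace $\Nobar$ by its canonical strict isomorph reducing to $\str{q}\str{\fubar}M$, and then read off (i) as essential surjectivity and (ii) as full faithfulness of the classifying equivalence over $\qubar$ (the forced formula $\str{\alpha}G=f^{-}(g^{-})^{-1}$ in (ii) is correct, given the variance of $\str{(-)}$ on 2-cells). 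Two caveats. First, the load-bearing step --- that the bipullback of $p$ along $\fubar$ classifies $\thT_1/\str{\fubar}M$ with generic model the pullback of $G$, i.e.\ stability of these classifiers under change of base --- is asserted rather than proved; this is precisely the nontrivial content that \cite{vickers:au-top} (and \cite[B4.2]{johnstone:elephant1} for elephant theories) supplies, so your argument is an honest unpacking of the cited result rather than an independent proof of it. Second, in (ii) you should note explicitly that $\alubar$ is invertible here (the fibration tower lives over $\ETopos_{\iso}$, and indeed $\str{\alubar}M$ is sandwiched between the isomorphisms $f_{-}$ and $g_{-}$); your comparison of the two bipullbacks along $\fubar$ and $\gubar$ via the bipullback universal property needs this, and the genuinely non-invertible case is exactly what the paper defers to Proposition~\ref{prop:non-iso-downstairs}, where the cocomma-topos device replaces the bipullback argument.
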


In the case where we have identity 1-cells and 2-cells downstairs,
it can be seen that this matches the usual characterization of classifier
for $\thT_1/M$ in $\BTopos/\CS$.

Although the properties described above insist on the 2-cells $\alubar$ and
model homomorphisms $f_{-}$ downstairs being isomorphisms,
we shall generalize this in a new result,
Proposition~\ref{prop:non-iso-downstairs}.

We first remark on the construction of finite lax colimits
in the $2$-category $\ETopos$ and more specifically cocomma objects which will be used in our proof.
There is a forgetful $2$-functor $\CU$ from $\op{\ETopos}$
to the $2$-category of categories which sends a topos $\CE$ to its underlying category $\CE$, a geometric morphism $f\maps \CE \to \CF$ to its inverse image part $f^*\maps \CF \to \CE$ and a geometric transformation $\theta\maps f \To g$ to the natural transformation $\theta^*\maps f^* \To g^*$.

The 2-functor $\CU$ transforms colimits in $\ETopos$ to limits in $\Cat$.
This in particular means that the underlying category of a coproduct of toposes,
for instance, is the product of their underlying categories.
The same is true for cocomma objects.
More specifically, for any topos $\CE$,
with cocomma topos $(\id_{\CE} \uparrow \id_{\CE})$ equipped with geometric morphisms
$i_0, i_1\maps \CE \toto (\id_{\CE} \uparrow \id_{\CE})$ and $2$-cell $\theta$ between them,
the data $\lang \str{i}_0, \str{i}_1, \str{\theta} \rang$
specifies the corresponding comma category
$\comma{\id_{\CU(\CE)}}$.
For more details on the construction of cocomma toposes see~\cite[B3.4.2]{johnstone:elephant1}.
Another useful remark is about the relation of topos models of $\arrw{\thT}$ and models of $\thT$.

\begin{lem}
\label{lem:comma-models}
Models of  $\arrw{\thT}$ in a topos $\CE$ are equivalent to
models of $\thT$ in the cocomma topos $(\id_{\CE} \uparrow \id_{\CE})$.
\end{lem}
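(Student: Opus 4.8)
The plan is to see both categories in the statement as repackagings of the same data: a pair of $\thT$-models in $\CE$ together with a homomorphism between them. On the left-hand side this is immediate from the construction of the hom context recalled above: a model of $\arrw{\thT}$ in $\CE$ interprets the nodes $X_1$ and edges $u_1$ as a $\thT$-model $M_1$, the nodes $X_0$ and edges $u_0$ as a $\thT$-model $M_0$, and the edges $\theta_X$ as the components of a family $\theta_X\maps M_0 X \to M_1 X$; the connecting edges $\theta_u$ add no further data, and their two commutativities are exactly the naturality squares forcing $\theta$ to be a homomorphism $M_0 \to M_1$. A homomorphism of $\arrw{\thT}$-models unwinds, the same way, into a pair of $\thT$-homomorphisms $M_0 \to M_0'$, $M_1 \to M_1'$ intertwining $\theta$ with $\theta'$.

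For the right-hand side I would invoke the remark immediately preceding the lemma: the underlying category of the cocomma topos $(\id_{\CE}\uparrow\id_{\CE})$ is the comma category $\comma{\id_{\CU(\CE)}}$, i.e.\ the arrow category of $\CE$, with $\str{i}_0$ and $\str{i}_1$ the domain and codomain projections and $\str{\theta}$ the tautological transformation whose component at an arrow of $\CE$ is that arrow itself. So a model $N$ of $\thT$ in $(\id_{\CE}\uparrow\id_{\CE})$ assigns an arrow of $\CE$ to each node and a commuting square to each edge; its domain legs assemble into a graph morphism $\str{i}_0 N$, its codomain legs into a graph morphism $\str{i}_1 N$, and $\str{\theta} N$ is the natural family relating them.

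The crux is to show that the arithmetic-universe structure of the topos $(\id_{\CE}\uparrow\id_{\CE})$ --- finite limits, finite colimits and parameterized list objects --- is computed ``componentwise''; granting this, $N$ is a $\thT$-model precisely when $\str{i}_0 N$ and $\str{i}_1 N$ are $\thT$-models in $\CE$. I would argue this as follows: $\str{i}_0$ and $\str{i}_1$ are inverse image functors, hence AU-functors, hence preserve all of this structure; the arrow category, being a functor category on $\CE$, already carries componentwise finite limits, finite colimits and list objects; and the pair $(\str{i}_0, \str{i}_1)$ is jointly conservative (a morphism of arrows is invertible iff both of its components are). Hence the canonical comparison from each topos-level universal of $(\id_{\CE}\uparrow\id_{\CE})$ to the corresponding componentwise one is sent by both $\str{i}_0$ and $\str{i}_1$ to an isomorphism, so is itself an isomorphism; consequently a cone, cocone or list diagram is universal for $N$ in $(\id_{\CE}\uparrow\id_{\CE})$ exactly when its two components are universal in $\CE$.

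Assembling the pieces, $N \mapsto (\str{i}_0 N, \str{i}_1 N, \str{\theta} N)$ identifies the models of $\thT$ in $(\id_{\CE}\uparrow\id_{\CE})$ with pairs of $\thT$-models in $\CE$ equipped with a homomorphism, that is, with the models of $\arrw{\thT}$ in $\CE$; and a homomorphism of $\thT$-models in the arrow category splits into its domain and codomain components, its squareness being exactly the intertwining condition, so the morphisms match too. Organising these bijections into functors in both directions gives the asserted equivalence --- in fact an isomorphism of categories once one restricts to strict models, the only slack in general coming from the choices of componentwise (co)limits and list objects witnessing a non-strict model. I expect the one genuinely delicate point to be the ``componentwise'' claim for the topos structure of $(\id_{\CE}\uparrow\id_{\CE})$, and in particular for list objects, where one must reason through the universal property rather than an explicit coproduct formula; the rest is bookkeeping on top of the hom-context construction and the comma-category description of the cocomma topos.
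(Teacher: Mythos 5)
Your argument is correct and follows exactly the route the paper intends: the paper offers no explicit proof, but the remark it places immediately before the lemma (that $\CU$ turns the cocomma topos into the comma category $\comma{\id_{\CU(\CE)}}$, i.e.\ the arrow category of $\CE$, with $\str{i}_0,\str{i}_1,\str{\theta}$ as domain, codomain and the tautological transformation) is precisely the key fact you invoke, and your componentwise analysis of the AU structure of the arrow category supplies the remaining bookkeeping that the paper leaves implicit.
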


\begin{prop} \label{prop:non-iso-downstairs}
  Let $U\maps\thT_1\to\thT_0$ be an extension maps of contexts,
  $M$ a strict model of $\thT_0$ in an elementary topos $\CS$,
  and $p\maps\CS[\thT_1/M]\to\CS$ the corresponding classifying topos
  with generic model $G$.

  Let $q\maps\qobar\to\qubar$ be a bounded geometric morphism,
  and let
  $(f_i, f^{-}_i, f_{i-})\maps (q, N_i) \to (p,(G,M))$
  ($i=0,1$) be two $P$-cartesian 1-cells in $\PQE$.

  Suppose $\varphi\maps N_0 \to N_1$ is a homomorphism of $U$-models
  and $\alubar\maps \fubar_0 \To \fubar_1$ is such that the left hand diagram in below commutes.
  Then there exists a unique $2$-cell $\alpha\maps f_0 \To f_1$ over $\alubar$
  such that the right hand diagram commutes.
  \[
    \begin{tikzcd}
      \Nubar_0
        \ar[r, "f_{0-}"]
        \ar[d, "\fiubar"]
      & \str{\fubar_0}M
        \ar[d, "\str{\alubar}M"]
      \\
      \Nubar_1
        \ar[r, "f_{1-}"]
      & \str{\fubar_1}M
    \end{tikzcd}
    \quad
    \begin{tikzcd}
      \Nobar_0
        \ar[r, "f_0^{-}"]
        \ar[d, "\fiobar"]
      & \str{f_0}G
        \ar[d, dotted, "\str{\alpha}G"]
      \\
      \Nobar_1
        \ar[r, "f_1^{-}"]
      & \str{f_1}G
    \end{tikzcd}
  \]
\end{prop}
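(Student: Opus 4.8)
The plan is to re-package the $1$-dimensional datum $\varphi$ together with the downstairs $2$-cell $\alubar$ as a single model of $U$ over a cocomma base, and then to read $\alpha$ off the $P$-cartesian $1$-cell that the local representability of $p$ (Proposition~\ref{prop:classifier}(i)) supplies over it. To set up the packaging, write $C\eqdef(\id_{\qobar}\uparrow\id_{\qobar})$ and $D\eqdef(\id_{\qubar}\uparrow\id_{\qubar})$ for the two cocomma toposes, with injections $i_0,i_1$ and universal $2$-cells $\theta$, and let $\widehat q\maps C\to D$ be the geometric morphism determined, via the cocomma universal property of $C$, by $\widehat q\,i_j=i_j\,q$ and $\widehat q\,\theta=\theta\,q$; one checks that $\widehat q$ is bounded. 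Since $U$ is an extension map, so is the induced $\arrw{U}\maps\arrw{\thT_1}\to\arrw{\thT_0}$, and then Lemma~\ref{lem:comma-models}, applied to $\thT_0$ and to $\thT_1$ and natural in reduction along $\arrw{U}$, identifies a homomorphism of $U$-models in $q$ with a model of $U$ in $\widehat q$. Let $\widehat N=(\widehat\Nobar,\widehat\Nubar)$ be the model of $U$ in $\widehat q$ corresponding to $\varphi\maps N_0\to N_1$, so that $\str{i}_j$ applied to the two parts of $\widehat N$ returns $(\Nobar_j,\Nubar_j)$ and $\str\theta$ returns $\varphi$. On the downstairs side the cocomma universal property of $D$ packages $(\fubar_0,\fubar_1,\alubar)$ into one geometric morphism $\widehat\fubar\maps D\to\CS$ with $\widehat\fubar\,i_j=\fubar_j$ and $\widehat\fubar\,\theta=\alubar$, and then $\str{\widehat\fubar}M$ corresponds under Lemma~\ref{lem:comma-models} to $\str{\alubar}M\maps\str{\fubar_0}M\to\str{\fubar_1}M$. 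Commutativity of the left-hand square in the statement is precisely the assertion that $f_{0-}$ and $f_{1-}$ assemble, via Lemma~\ref{lem:comma-models}, into an isomorphism $\widehat f_{-}\maps\widehat\Nubar\iso\str{\widehat\fubar}M$.

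Next I would apply Proposition~\ref{prop:classifier}(i) to the object $(\widehat q,\widehat N)$ of $\PQE$, the $1$-cell $\widehat\fubar$, and the isomorphism $\widehat f_{-}$: it yields a $P$-cartesian $1$-cell $(\widehat f,\widehat f^{-},\widehat f_{-})\maps(\widehat q,\widehat N)\to(p,(G,M))$, with $\widehat\fobar\maps C\to\CS[\thT_1/M]$ and $\widehat f^{-}\maps\widehat\Nobar\iso\str{\widehat f}G$ invertible. Then $\widehat\fobar\,i_j$ classifies $\str{i}_j\str{\widehat\fobar}G\iso\str{i}_j\str{\widehat f}G\iso\str{i}_j\widehat\Nobar=\Nobar_j\iso\str{f_j}G$ (the last map being $f_j^{-}$, invertible since $f_j$ is $P$-cartesian), and --- using the structural isomorphisms of $f_j$ and of $\widehat f$ together with $\widehat q\,i_j=i_j\,q$ --- it lies over $\fubar_j\,q$, so by the classifying equivalence $\Phi$ it is isomorphic over $\CS$ to $\fobar_j$. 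Normalising via these isomorphisms so that $\widehat\fobar\,i_j=\fobar_j$, I would set $\alobar\eqdef\widehat\fobar\,\theta\maps\fobar_0\To\fobar_1$ and $\alpha\eqdef\lang\alobar,\alubar\rang$. Since $p\,\widehat\fobar\,\theta$ corresponds, through the structural isomorphism of $\widehat f$, to $\widehat\fubar\,\widehat q\,\theta=\widehat\fubar\,\theta\,q=\alubar\,q$, the coherence condition of Definition~\ref{def:GTop} holds, so $\alpha$ is a genuine $2$-cell $f_0\To f_1$ lying over $\alubar$; and by Construction~\ref{action-of-topos-1-cells-on-models-of-extensions} together with the identification of $\lang\str{i}_0,\str{i}_1,\str\theta\rang$ with a comma category recalled before Lemma~\ref{lem:comma-models}, $\str\alpha G$ is the arrow component of the $\arrw{\thT_1}$-model $\str{\widehat\fobar}G\iso\str{\widehat f}G\iso\widehat\Nobar$, which is $\fiobar$ up to $f_0^{-}$ and $f_1^{-}$ --- that is, the right-hand square commutes. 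For uniqueness: any $\alpha'$ over $\alubar$ making the right square commute has $\str{\alpha'}G=\str\alpha G$ (cancel $f_0^{-}$), and feeding $\alobar'$ into the cocomma universal property of $C$ produces a geometric morphism $C\to\CS[\thT_1/M]$ that restricts along $i_0,i_1$ to $\fobar_0,\fobar_1$ and classifies the $\arrw{\thT_1}$-model with arrow part $\str\alpha G$, exactly like the morphism built from $\alobar$; by $\Phi$ these agree, whence $\alobar'=\alobar$ and $\alpha'=\alpha$.

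The hard part will not be any single step but keeping strictness and coherence aligned through all of them: the cocomma universal properties in $\ETopos$, the equivalence of Lemma~\ref{lem:comma-models}, and the classifying equivalence $\Phi$ hold only up to canonical isomorphism, whereas $\str{(-)}$ is strict and the models $G$, $\str{f_j}G$, and the equalities $\str{f_j}G\dot U=\str q\,\str{\fubar_j}M$ are on the nose; so each identification above must be reconciled with the canonical strict isomorph of Remark~\ref{lift-of-strict-models} and with the strict compatibility of $\str{(-)}$ with reduction along extension maps (\cite[Lemma 9]{vickers:au-top}). The two subsidiary facts --- that $\arrw{U}$ is again a context extension map, and that $\widehat q$ is bounded, so that $(\widehat q,\widehat N)$ genuinely lies in $\PQE$ --- are routine but must be recorded.
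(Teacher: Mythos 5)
Your proposal is correct and follows essentially the same route as the paper: both package $\varphi$ as a $U$-model over the cocomma toposes, lift the downstairs datum $(\fubar_0,\fubar_1,\alubar)$ to a $P$-cartesian $1$-cell out of the cocomma object via Proposition~\ref{prop:classifier}(i), compare its restrictions along $i_0,i_1$ with $f_0,f_1$ using cartesianness, and whisker with $\theta$ to produce $\alpha$. The only step looser than the paper's is the final ``by $\Phi$ these agree, whence $\alobar'=\alobar$'': the classifying equivalence only yields a unique isomorphism between the two morphisms out of the cocomma topos, and one must still check --- as the paper does by whiskering that isomorphism with $i_0,i_1$ and invoking the uniqueness of vertical $2$-cells acting as the identity on $(G,M)$ --- that this isomorphism is in fact the identity, so that $\alobar'=\alobar$ on the nose.
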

\begin{proof}
  Note that we do not assume that $\alubar$ and $\fiubar$
  are isomorphisms,
  so $\varphi$ need not be a 1-cell in
  $\GTopos_{\iso}$.
  To get round this, we use cocomma toposes.

  Let $\qubar'= \qubar\uparrow\qubar$ and $\qobar' = \qobar\uparrow\qobar$
  be the two cocomma toposes,
  with bounded geometric morphism $q' \maps \qobar' \to \qubar'$.
  We now have two 1-cells $i_0,i_1\maps q \to q'$ in $\GTopos$,
  equipped with identities for $\itri_0$ and $\itri_1$,
  and a 2-cell $\theta\maps i_0 \To i_1$.
  The pair $\varphi=(\fiobar,\fiubar)$ is a model of $U$ in $q'$.

  The geometric transformation $\alubar$ gives us a geometric morphism
  $\aubar\maps\qubar'\to\CS$,
  with an isomorphism $a_{-}\maps \fiubar \iso \str{\aubar}M$,
  so a 1-cell in $\PQB$.
  This lifts to a $P$-cartesian 1-cell
  $(a, a^{-}, a_{-}) \maps (q', \varphi) \to (p,(G,M))$
  in $\PQE$.
  We now have the following diagrams in $\GTopos$ and $\PQE$.

  \[
  \begin{tikzcd}
    q
      \ar[rr, bend left, "i_1" near end, ""{name=ctheta, below}]
      \ar[rr, bend right, "i_0" {below, near end}, ""{name=dtheta, above}]
      \ar[rrrr, bend left=40, "f_1", ""{name=dmu1, below}]
      \ar[rrrr, bend right=40, "f_0" below, ""{name=dmu0, above}]
    && q'
      \ar[rr, "a"]
      \ar[from=dmu0, Rightarrow, "\mu_0" right]
      \ar[from=dmu1, Rightarrow, "\mu_1"]
    && p
    \ar[from=dtheta, to=ctheta, Rightarrow, "\theta" right]
  \end{tikzcd}
  \quad
  \begin{tikzcd}
    (q,N_1)
      \ar[dr, "{(i_1, \id, \id)}" below left]
      \ar[drrr, bend left=10, "{(f_1, f^{-}_1, f_{1-})}"]
    \\
    & (q',\varphi)
      \ar[rr, "{(a,a^{-},a_{-})}" below]
    && (p, (G, M))
    \\
    (q,N_0)
      \ar[ur, "{(i_0, \id, \id)}"]
      \ar[urrr, bend right=10, "{(f_0, f^{-}_0, f_{0-})}"]
  \end{tikzcd}
  \]
  In the right hand diagram all the 1-cells are $P$-cartesian,
  and it follows there are unique iso-2-cells
  $\mu_i\maps (f_1, f^{-}_1, f_{1-}) \to (a,a^{-},a_{-})(i_1, \id, \id)$
  lifting the identity 2-cells downstairs.
  Now by composing $\mu_0$, $a\dot\theta$ and $\mu_1^{-1}$
  we get the required $\alpha$.

  To show uniqueness of the geometric transformation $\alpha$,
  suppose we have another, $\beta$, with the same properties.
  In other words, $\alubar=\betubar$ and
  $\str{\alpha}(G,M)=\str{\beta}(G,M)$.
  We thus get two 1-cells $a,b\maps q' \To p$,
  $a=(f_0,\alpha,f_1)$ and $b=(f_0,\beta,f_1)$.
  We have $\aubar=\bubar$ and $\str{a}(G,M)=\str{b}(G,M)$
  and it follows that there is a unique vertical 2-cell
  $\iota\maps a \To b$ such that $\str{\iota}(G,M)$ is the identity.

  By composing horizontally with $\theta$,
  we can analyse $\iota$ as a pair of 2-cells
  $\iota_\lambda\maps f_\lambda \To f_\lambda$
  ($\lambda=0,1$) such that the following diagram commutes.
  \[
    \begin{tikzcd}
      f_0
        \ar[r, "\iota_0"]
        \ar[d, swap, "\alpha"]
      & f_0
        \ar[d, "\beta"]
      \\
      f_1
        \ar[r, "\iota_1"]
      & f_1
    \end{tikzcd}
  \]
  Now we see that each $\iota_\lambda$ is the the unique vertical 2-cell
  such that $\str{\iota_\lambda}(G,M)$ is the identity,
  so $\iota_\lambda$ is the identity on $f_\lambda$ and $\alpha=\beta$.
\end{proof}

%%%%%%%%
%%%%%%%%
\section{The Chevalley criterion in \texorpdfstring{$\Con$}{Con}}
\label{sec:strict-internal-fibrations-in-2cat}
%%%%%%%%
In~\cite{street:fib-yoneda-2cat}, and later in~\cite{street:fib-in-bicat},
Ross Street develops an elegant algebraic approach to study fibrations, opfibrations,
and two-sided fibrations in 2-categories and bicategories.
In the case of (op)fibrations the 2-category is required to be \emph{finitely complete,}
with strict finite conical limits%
\footnote{
  \ie weighted limits with set-valued weight functors.
  They are ordinary limit as opposed to more general weighted limits.
}
and cotensors with the (free) walking arrow category $\walkarr$.
Given those, it also has strict comma objects.
Then he defined a fibration (opfibration) as a
pseudo-algebra of a certain right (\resp left) slicing 2-monad.
In the case of bicategories they are defined via ``hyperdoctrines'' on bicategories.

For (op)fibrations internal to 2-categories,
he showed~\cite[Proposition 9]{street:fib-yoneda-2cat} that his definition was
equivalent to a Chevalley criterion.
However, for our purposes we prefer to start from the Chevalley criterion
and bypass Street's characterization using pseudoalgebras.

Note also that Street weakened the original Chevalley criterion of Gray,
by allowing the adjunction to have counit an isomorphism.
We shall revert to the original requirement for an identity.

We do not wish to assume existence of all pullbacks since our main 2-category $\Con$
does not have them.
Instead, we assume our 2-categories in this section to have all
finite strict PIE-limits~\cite{pr:pie-limits},
in other words those reducible to Products, Inserters and Equifiers.
This is enough to guarantee existence of all strict comma objects
since for any opspan $A \xraw{f} B \xlaw{g} C$ in a 2-category $\KK$
with (strict) finite PIE-limits,
the comma object $\commacat{f}{g}$ can be constructed as an inserter of
$f \pi_A, g \pi_C \maps A \times C \toto B$.
Moreover, it is a result of~\cite[Lemma 44]{vickers:sk-au}
that $\Con$ has finite PIE-limits.

Pullbacks are not PIE-limits, so sometimes we shall be interested in whether they exist.

\begin{defn}\label{def:carrable}
A $1$-cell $p \maps E \to B$ in a 2-category $\KK$ is \strong{carrable}
whenever a strict pullback of $p$ along any other $1$-cell $f\maps B' \to B$ exists in $\KK$.
As usual, we write $f^\ast p \maps f^\ast E \to B'$ for a chosen pullback of $p$ along $f$.
\end{defn}

\cite{vickers:sk-au} proves that all extension maps in $\Con$ are carrable.

From now on in this section, we assume that $\KK$ is a 2-category with all finite PIE-limits.
Note that for AUs and elementary toposes,
we assume that the structure is given \emph{canonically}
-- this is essential if we are to consider strict models.
For our $\KK$ here we do not assume there are canonical PIE-limits of pullbacks.
Indeed, in $\Con$ (so far as we know)
they do not exist.
1-cells are defined only modulo objective equality,
and the construction of those limits depends on
the choice of representatives of 1-cells.

We first describe the Chevalley criterion in the style of~\cite{street:fib-yoneda-2cat}.
Suppose $B$ is an object of $\KK$, and $p$ is a $0$-cell
in the strict slice $2$-category $\KK/B$.
By the universal property of (strict) comma object $\commacat{B}{p}$,
there is a unique $1$-cell $\Gamma_1 \maps \comma{E} \to \commacat{B}{p} $
with properties $R(p) \Gamma_1 = d_0 \comma{p}$, $\hat{d_1} \Gamma_1 = e_1$,
and $\phi_p \dot \Gamma_1 = p\dot \phi_E$.

\begin{equation}
\label{diag:Chevalley}
\begin{tikzcd}[row sep=3em]
\comma{E}
  \arrow[rrd, bend left= 25, "e_1"]
  \arrow[d, swap, "\comma{p}"]
  \arrow[rd,dotted, swap, "\Gamma_1"] & &
\\
\comma{B}
  \arrow[dr, swap, bend right= 20, "d_0"]
& \commacat{B}{p}
  \arrow[r, "\hat{d_1}"]
%   \arrow[ul, dotted,swap, bend right=15, "\Lambda_1"]
  \arrow[d,swap,"R(p)"]
  \arrow[dr, phantom, bend left=25, ""{name=foo}]
  \arrow[dr, phantom, bend right= 25, ""{name=bar}]
  & E
  \arrow[d,"p"]
\\
 & B
  \arrow[r,swap,"1"]
  \arrow[Rightarrow, phantom, "\fipup", from=bar, to=foo]
  & B
\end{tikzcd}
\end{equation}

\begin{defn}
\label{defn:Chevalley-fibration}
Consider $p$ as above. We call $p$ a \strong{(Chevalley) fibration}
if the $1$-cell $\Gamma_1$ has a right adjoint $\Lambda_1$
with counit $\ep$ an identity in the 2-category $\KK/B$.

Dually one defines (Chevalley) \strong{opfibrations} as $1$-cells $p\maps E \to B$
for which the morphism $\Gamma_0 \maps \comma{E} \to \commacat{p}{B}$ has a left adjoint $\Lambda_0$
with unit $\eta$ an identity.
\end{defn}

Street~\cite{street:fib-yoneda-2cat},
but using isomorphisms for the counits $\ep$
instead of identities,
showed that the Chevalley criterion is equivalent
to a certain pseudoalgebra structure on $p$.
Gray~\cite{gray:fibcofibcat} showed that Chevalley fibrations in the 2-category $\Cat$ of (small) categories correspond to well-known (cloven) Grothendieck fibrations.

In the case where $p$ is carrable, the comma objects $\commacat{p}{B}$ and 
$\commacat{B}{p}$ can be expressed as pullbacks along the two projections from $\comma{B}$ to $B$.
Let us at this point reformulate the fibration property using the notation
as it will appear in $\Con$ when $p$ is an extension map $U\maps \thT_1 \to \thT_0$
-- and using the fact that extension maps are carrable.

Let $\dom, \cod\maps \arrw{\thT_0} \to \thT_0 $ be the domain and codomain context maps corresponding to sketch homomorphisms $i_0,i_1\maps \thT_0 \to \arrw{\thT_0}$.
We define the context extension maps
$\dom^{*}\thT_1 \to \arrw{\thT_0}$
and $\cod^{*}\thT_1 \to \arrw{\thT_0}$
as the pullbacks of $U$ along $\dom$ and $\cod$.
A model of $\dom^*(\thT_1)$ is a pair
$(N, f\maps M_0 \to M_1 )$ where  $f$ is a homomorphism of models of $\thT_0$
and $N$ is a model of $\thT_1$ such that
$N \dot U = M_0$.
Models of $\cod^*(\thT_1)$ are similar,
except that $N \dot U = M_1$.
There are induced context maps
$\Gamma_0\maps \arrw{\thT_1} \to \dom^*(\thT_1)$ and $\Gamma_1\maps \arrw{\thT_1} \to \cod^*(\thT_1)$.
Given a model $f\maps N_0 \to N_1$ of $\arrw{\thT_1}$,
$\Gamma_i$ sends it to
$( N_i, f \dot \arrw{U}\maps N_0 \dot U \to N_1 \dot U )$.

\begin{equation} \label{diag:context-transform}
  \begin{tikzcd}[column sep=1.6em, row sep=1.5em]
    \arrw{\thT_1}
      \arrow[dr,swap, "\Gamma_0"]
      \arrow[rr, swap, bend right=12, "\Gamma_1"]
      \arrow[dddr, bend right= 25, swap, "U^\to"]
    & \top
    & \cod^*(\thT_1)
      \arrow[dr,"\pi_1"]
      \arrow[dd]
      \arrow[ll, dotted, swap, bend right=13, "\Lambda_1"]
      \arrow[rd, phantom, bend right= 25, ""{name=qux}]
    \\
    & \dom^* (\thT_1)
      \arrow[rr, crossing over, bend right=13,
        near end,swap, "\pi_0"]
      \arrow[rr, bend left=1, phantom, ""{name=baz}]
      \arrow[dd, swap, "U_0"]
      \arrow[dr, phantom, bend left= 5,""{name=waldo}]
      \arrow[dr, phantom, bend right= 55, ""{name=fred}]
    & & \thT_1
      \arrow[dd, "U"]
      %\arrow[Rightarrow, from=baz, to=qux, swap, "\alobar" ]
    \\
    && \arrw{\thT_0}
      \arrow[dl, equal, ""{name=foo}]
      \arrow[rd, near start, "\cod"]
      \arrow[dl, phantom, bend left=25, ""{name=alex}]
      \arrow[dr, phantom, bend right=5, ""{name=bar}]
    & &  \\
    & \arrw{\thT_0}
      \arrow[rr, swap, bend right=13, "\dom",
        ""{name=alfred}]
      \arrow[rr, swap, bend left=1, phantom, ""{name=hoax}]
    & & \thT_0
      \arrow[from=uu, crossing over]
      \arrow[Rightarrow, shift right=4pt,
        shorten >= +15pt, pos=0.3, swap, from=hoax,
        to=bar, swap, "\theta_{\thT_0}"]
    \\
  \end{tikzcd}
\end{equation}

\begin{rem}
A consequence of the counit of the adjunction $\Gamma_1 \adj \Lambda_1$ being the identity
is that the adjunction triangle equations are expressed in simpler forms;
we have $\Gamma_1 \dot \eta_1 = \id_{\Gamma_1}$
and $\eta_1\dot\Lambda_1 = \id_{\Lambda_1}$.%
\end{rem}

\begin{rem}
\label{Con:lift-of-identity}
The composite $\Gamma_0 \Lambda_1$
is a $1$-cell from $\cod^* (\thT_1)$ to $\dom^* (\thT_1)$.
Moreover, there is a 2-cell from
$\pi_0\Gamma_0\Lambda_1$ to $\pi_1$
constructed as
$\pi_0 \Gamma_0 \Lambda_1
  \xRaw{\theta_{\thT_1} \Lambda_1}
  \pi_1 \Gamma_1 \Lambda_1
  = \pi_1$.
These two, the 1-cell and the 2-cell,
will appear again as the central structure needed for the Johnstone style of definition
in \textsection \ref{sec:Johnstone-crit}.%
\end{rem}

\begin{ex} \label{ex:pointed-set-opfib}
The context extension $U\maps \thob[pt] \to \thob$
(Example~\ref{ex:pointed-set})
is an extension map with the opfibration property.
\begin{proof}
First we form the pullbacks of the context extension $U$ along the two context maps $\dom$
and $\cod$.
$U_0$ and $U_1$ are $U$ \emph{reindexed} along
$\dom$ and $\cod$:
the same simple extension steps,
but with the data for each transformed by
$\dom$ or $\cod$.
\[
  \begin{tikzcd}[row sep=3em]
    {\dom^{*}(\thob[pt])}
      \arrow{r}{\pi_0}
      \arrow{d}[swap]{U_0}
    & {\thob[pt]}
      \arrow{d}{U}
    \\
    \arrw{\thob}
      \arrow{r}[swap]{\dom}
    &  {\thob}
  \end{tikzcd}
  \quad\quad
  \begin{tikzcd}[row sep=3em]
    {\cod^{*}(\thob[pt])}
      \arrow{r}{\pi_1}
      \arrow{d}[swap]{U_1}
    & {\thob[pt]}
      \arrow{d}{U}
    \\
    \arrw{\thob}
      \arrow{r}[swap]{\cod}
    &  {\thob}
  \end{tikzcd}
\]

$\dom^{*}(\thob[pt])$ is a context with three nodes: a terminal $1$,
primitive nodes $X_0$ and $X_1$,
and edges $x_0\maps 1\to X_0$, $\theta_X\maps X_0 \to X_1$, and identities on the three nodes. $\cod^{*}(\thob[pt])$ is similar,
but with $x_1\maps 1 \to X_1$ instead of $x_0$.

There is, in addition, the arrow context $\arrw{\thob[pt]}$ which consists of all the nodes, edges, and two commutativities
$\theta_X x_0 \sim \theta_x$,
$x_1 \theta_1 \sim \theta_x$
(marked by bullet points) as presented in the following diagram plus identity edges.
\[
    \xymatrix{
      {1_1}
        \ar@{->}[r]^{x_1}_{\bullet}
      & {X_1}
      \\
      {1_0}
        \ar@{->}[r]_{x_0}^{\bullet}
        \ar@{->}[ur]^{\theta_x}
        \ar@{->}[u]^{\theta_1}
      & {X_0}
        \ar@{->}[u]_{\theta_X}
    }
  \]

There are context maps $\Gamma_0$ and $\Gamma_1$ which make the following diagram commute:

\begin{equation*}
\begin{tikzcd}
  & {\cod^{*}\thob[pt]}
    \ar[dr,"\pi_1"]
    \ar[rr, "U_1"]
  && {\thob^\to}
    \arrow[d,"\cod"]
  \\
  \arrw{\thob[pt]}
    \ar[ur,"\Gamma_1"]
    \ar[dr, "\Gamma_0", swap, shift right=1]
  &  & {\thob[pt]}
    \ar[r, "U"]
  &  {\thob}
  \\
  &{\dom^{*}\thob[pt]}
    \ar[ur, swap, "\pi_0"]
    \ar[ul,shift right=1, swap, blue, dotted,
      "\Lambda_0"]
    \ar[rr, swap, "U_0"]
  && {\thob^ \to} \arrow[u, swap, "\dom"]
\end{tikzcd}
\end{equation*}

$\Gamma_0$ is the dual to the sketch morphism
$\dom^*\thob[pt] \to \arrw{\thob[pt]}$
that takes $1$ to $1_0$ and otherwise preserves notation.
$\Gamma_1$ is similar.

More interestingly, $\Gamma_0$ has a left adjoint $\Lambda_0\maps \dom^{*}(\thob[pt]) \to  \arrw{\thob[pt]}$.
For this, $X_0$, $\theta_X$, $X_1$ and $x_0$
in $\arrw{\thob[pt]}$ are interpreted in
$\dom^\ast\thob[pt]$ by the ingredients with the same name,
and $1_0$, $1_1$ by $1$ and
$\theta_1$ by the identity on $1$.
For $\theta_x$ and $x_1$ we need an equivalence extension
of $\dom^\ast\thob[pt]$ got by adjoining the
composite $\theta_X x_0$,
and a commutativity for one of the unit laws
of composition.

It is now obvious that
$\Gamma_0 \Lambda_0 = \id \maps \dom^{*}(\thob[pt]) \to \dom^{*}(\thob[pt])$.
Less obvious, but true in this example, is that
$\Lambda_0 \Gamma_0$ is the identity on $\arrw{\thob[pt]}$.
This follows from the rules for objective equality,
and is essentially because in any strict model
$1_0$ and $1_1$ are both interpreted as the
canonical terminal object,
and $\theta_1$ as the identity on that.
\end{proof}
\end{ex}

We now outline the argument to show that two further examples should be expected to be (op)fibrations.
Details are to appear in~\cite{hazratpour:thesis}.

\begin{ex} \label{ex:SpecLFib}
Let $U\maps\thT_1 \to \thT_0$ be the context extension map
of Example~\ref{ex:SpecL},
for prime filters of distributive lattices.
To show that this is a fibration,
consider a distributive lattice homomorphism $f\maps L_0 \to L_1$.
The map $\Spec(f)\colon \Spec(L_1) \to \Spec(L_0)$ can be expressed using contexts.
It takes a prime filter $F_1$ of $L_1$ to its inverse image $F_0$ under $f$ which is a prime filter of $L_0$.
$f$ restricts (uniquely) to a function from $F_1$ to $F_0$,
and so we get a $\thT_1$-homomorphism
$f'\maps(L_1,F_1)\to(L_0,F_0)$.
The construction so far can all be expressed using AU-structure, and so gives our
$\Lambda_1\maps\cod^\ast(\thT_1)\to\arrw{\thT_1}$.
\[
  \begin{tikzcd}
    (L_0, F_0=f^{-1}(F_1))
      \arrow[r,dotted, "f'"]
      \arrow[mapsto, swap, d, "U"]
    &  (L_1, F_1) \arrow[mapsto, d, "U"]
    \\
    L_0
      \arrow[r, swap, "f"]
    & L_1
  \end{tikzcd}
\]

Aided by the fact that
$\Gamma_1\maps\arrw{\thT_1}
  \to\cod^\ast(\thT_1)$
is given by a sketch homomorphism
(no equivalence extension of $\arrw{\thT_1}$ needed),
we find that $\Gamma_1 \Lambda_1$ is the identity on $\cod^{-1}(\thT_1)$.
The unit $\eta\maps\id\To\Lambda_1\Gamma_1$
of the adjunction is given as follows.
In $\arrw{\thT_1}$ we have a generic
$f\maps(L_0,F_0)\to(L_1,F_1)$,
and clearly $f$ restricted to $F_0$ factors via
$f^{-1}(F_1)$.
Taking this with the identity on $L_1$
gives a $\arrw{\thT_1}$-homomorphism from
$(L_0,F_0)\to(L_1,F_1)$ to
$(L_0,f^{-1}(F_1)\to(L_1,F_1)$,
and hence our $\eta$.
The diagonal equations for the adjunction hold.
\end{ex}

\begin{ex} \label{ex:torsorsOpfib}
Let $U\maps\thT_1 \to \thT_0$ be the context extension map
of Example~\ref{ex:torsors},
for torsors (flat presheaves) of categories.
To show that this is an opfibration,
consider a functor $F\maps C \to D$.

If $T$ is a torsor over $C$,
we must define a torsor $T'= \Tor(F)(T)$ over $D$.
In Example~\ref{ex:torsors} our notation treated the presheaf structure as a right action by $C$ on $T$.
Analogously let us write $D$ as a $C$-$D$-bimodule,
with a right action by $D$ by composition,
and a left action by $C$ by composition after applying $F$.
We define $\Tor(F)(T)$, a $D$-torsor,
as the tensor $T\otimes D$.
Its elements are pairs $(x, f)$
with $x\in T$, $f\in D_1$ and $p(x)=d_0(f)$,
modulo the equivalence relation generated by
$(x, uf)\sim (xu, f)$.
This can be defined using AU structure.
Let us analyse an equation $(x,f)=(x',f')$ in more detail.
It can be expressed as a chain of equations
\[
  (yu,k)\sim^{-1}(y, uk) = (y, u'k')\sim (yu',k')
  \text{,}
\]
each for a quintuple $(k,u,y,u',k')$ with $uk=u'k'$.
Hence the overall equation $(x,f)=(x',f')$
derives from sequences $(k_i)$ ($0\le i\le n$)
and $(u_i),(y_i),(u'_i)$ ($0\le i < n$)
such that $u_i k_i = u'_i k_{i+1}$,
$y_i u'_i = y_{i+1}u_{i+1}$,
$f=k_0$, $x = y_0 u_0$, $f'=k_n$
and $x'=y_{n-1}u'_{n-1}$.
(We are thinking of $k'_i$ as $k_{i+1}$.)
By flatness of $T$ we can replace the $y_i$s
by elements $y v_i$ with
$v_i u'_i = v_{i+1}u_{i+1}$,
$x = y v_0 u_0$ and $x' = y v_{n-1} u'_{n-1}$.

We outline why $\Tor(F)(T)$ is flat (over $D$).
First, it is non-empty, because $T$ is.
If $x\in T$ then $(x,\id_{F(p(x))})\in \Tor(F)(T)$.
Next, suppose $(x,f),(x',f')\in \Tor(F)(T)$.
We can find $y,u,u'$ with $x=yu$ and $x'=yu'$,
and then $(x,f)=(yu,f)=(y,uf)=(y,\id)uf$
and $(x',f')=(\id,y)u'f'$.

Finally, suppose $(x,g)f=(x,g)f'$.
We must find $h,g',y$ such that $hf=hf'$
and $(x,g)=(y,g')h$.
Composing $g'$ and $h$, we can instead look for
$(y,h)=(x,g)$ such that $hf=hf'$.
In fact, we can reduce to the case where $g=\id$.
Suppose, then that we have $(x,f)=(x,f')$.
By the analysis above, we get $y$ and sequences
$(k_i),(u_i),(v_i),(u'_i)$
such that $u_i k_i = u'_i k_{i+1}$,
$v_i u'_i = v_{i+1}u_{i+1}$,
$f=k_0$, $x = y v_0 u_0$, $f'=k_n$
and $x=y v_{n-1}u'_{n-1}$.
Using flatness of $T$ again,
we can assume $v_0 u_0 =v_{n-1}u'_{n-1}$.
Now put $h \eqdef v_0 u_0$,
so $(y,h)=(y, v_0 u_0) = (y v_0 u_0,\id) = (x,\id)$.
Then, as required,
\[
  hf = v_0 u_0 k_0 = v_0 u'_0 k_1 = v_1 u_1 k_1
    = \cdots = v_{n-1} u'_{n-1} k_n = hf'
  \text{.}
\]
Although this reasoning is informal,
its ingredients -- and in particular the reasoning
with finite sequences --
are all present in AU structure.

Once we have $\Tor(F)(T)$ it is straightforward to define to define the function
$T \to \Tor(F)(T)$, $x\mapsto (x,\id)$
that makes a homomorphism of $\thT_1$-models.
Note in particular that the action is preserved:
$xu \mapsto (xu,\id) = (x,u) = (x,\id)u$.
This gives us our $\Lambda_0$,
and $\Gamma_0 \Lambda_0 = \id$.
For the counit of the adjunction,
let $(F,\theta)\maps(C,T)\to(D,T')$ be a
$\thT_1$-homomorphism.
Then $\theta$ factors via $\Tor(F)(T)$ using
$(x,f)\mapsto \theta(x)f$.
This respects the equivalence,
as $\theta(xu)f = \theta(x)F(u)f$
is a condition of $\thT_1$-homomorphisms.
\end{ex}

Note that Example~\ref{ex:pointed-set-opfib} can be got from
Example~\ref{ex:torsorsOpfib} as a pullback.
This is because there is a context map
$\thob\to[C\colon\mathrm{Cat}]$ taking a set $X$
to the discrete category over it.
A torsor over the discrete category is equivalent to an element of $X$.

We conjecture that further examples can be found as follows,
from the basic idea that,
given a style of presentation of spaces,
homomorphisms between presentations can yield
maps between the spaces.
\begin{itemize}
\item
  (Opfibration)
  Let $\thT_0$ be the theory of sets equipped with
  an idempotent relation,
  and $\thT_1$ extend it with a rounded ideal
  \cite{vickers:Infosys}.
\item
  (Opfibration)
  Let $\thT_0$ be the theory of generalized metric spaces,
  and $\thT_1$ extend it with a Cauchy filter
  (point of the localic completion)
  \cite{vickers:LocCompA}.
\item
  (Fibration)
  Let $\thT_0$ be the theory of normal distributive
  lattices, and $\thT_1$ extend it with a rounded
  prime filter
  \cite{svw:Gelfand-spectra}.
  This would generalize Example~\ref{ex:SpecLFib}.
\item
  (Bifibration)
  Let $\thT_0$ be the theory of strongly algebraic information systems,
  and let $\thT_1$ extend it with an ideal
  \cite{vickers:TopCat}.
  This is a special case of Example~\ref{ex:torsorsOpfib}
  -- when the category $C$ is a poset,
  then a torsor is just an ideal --
  and hence would be an opfibration.
  The fibrational nature would come from the fact that
  a homomorphism between two of these information systems
  corresponds to an adjunction between the corresponding
  domains.
\end{itemize}

%%%%%%%%
%%%%%%%%
\section{Remarks on cartesianness for bicategories}
\label{sec:cartesianness}
%%%%%%%%
Our discussion of the Johnstone criterion in \textsection\ref{sec:Johnstone-crit}
will involve a use of the cartesian 1-cells and 2-cells for a 2-functor,
and the present section discusses those.
It is important to note that, although our applications are for 2-functors between 2-categories,
the definitions we use are the ones appropriate to bicategories.

\cite{hermida:fib-2cat} generalizes the notion of fibration to strict 2-functors between strict 2-categories.
His archetypal example of strict 2-fibration is
the 2-category $\Fib$ of Grothendieck fibrations,
fibred over the 2-category of categories via the codomain functor $\ccod \colon \Fib \to \Cat$. Much later~\cite{bakovic:fib-in-tricat} in his talk, and~\cite{buckley:fibred-bicat} in his paper develop these ideas to define  fibration of bicategories.
Borrowing the notions of cartesian $1$-cells and $2$-cells
from their work, we reformulate Johnstone (op)fibrations in terms of the existence of cartesian lifts of $1$-cells and $2$-cells with respect to the codomain functor.
This reformulation will be essential in giving a concise proof of our main result in Theorem~\ref{thm:main}. The Johnstone definition is quite involved and this reformulation effectively organizes the data of various iso-2-cells as part of structure of $1$-cells in the $2$-category $\GTopos$.

We shall examine the cartesian 1-cells and 2-cells for our codomain 2-functor
$\ccod \maps \GTopos \to \ETopos$,
but we might as well do this in the abstract.
We assume for the rest of this section that $\KK$ is a 2-category.

\begin{rem}
\label{rem:recall-bipullback}
We recall that a \strong{bipullback} of an opspan $A \xraw{f} C \xlaw{g} B$ in a 2-category $\KK$ is given by a 0-cell $P$ together with $1$-cells $d_0, d_1$ and an iso-2-cell $\pi \maps fd_0 \To gd_1$ satisfying the following universal properties.

\begin{enumerate}
\item[(BP1)]
  Given another iso-cone $(l_0, l_1, \lambda\colon fl_0 \iso gl_1)$ over $f,g$ (with vertex $X$),
  there exists a $1$-cell $u$ with two iso-2-cells $\gamma_0$ and $\gamma_1$ such that the pasting diagrams below are equal.
\[
  \begin{tikzcd}
      [row sep=large, ampersand replacement=\&]
    X
      \arrow[dr, dashed, "u"]
      \arrow[ddr, bend right= 35, swap, "l_0"]
      \arrow[ddr, phantom, bend right=15,
        ""{name=bar}]
      \arrow[ddr, phantom,
        bend left= 15,""{name=foo}]
      \arrow[drr, bend left =35, "l_1"]
      \arrow[drr, phantom,
        bend right =10,""{name=aap}]
      \arrow[drr, phantom,
        bend left =15, ""{name=noot}]
    \&
      \arrow[Rightarrow, phantom,
        "\iso_{\gamma_0}", swap, shift right=5,
        from=bar, to=foo]
      \arrow[Rightarrow, phantom, shift right=5,
        "\iso_{\gamma_1}" , from=aap, to=noot,swap]
    \\
    \& P
      \arrow[r, "d_1"]
      \arrow[rd, phantom, bend left=25,
        ""{name=foo}]
      \arrow[rd, phantom, bend right=25,
        ""{name=bar}]
      \arrow[d, swap, "d_0"]
    \& B
      \arrow[d, "g"]
      \arrow[Leftarrow, phantom, from= foo,
        to=bar, "\iso_{\pi}"]
    \\
    \& A
      \arrow[r, swap, "f"]
    \& C
  \end{tikzcd}
  \quad = \quad
  \begin{tikzcd}
      [row sep=large, ampersand replacement=\&]
    X
      \arrow[ddr, bend right= 35, swap, "l_0"]
      \arrow[drr, bend left =35, "l_1"]
      \arrow[ddr, phantom, bend left=30,
        ""{name=bar}]
      \arrow[ddr, phantom,
        bend left= 25,""{name=foo}]
    \&
    \\
    \&\& B
      \arrow[d, "g"]
      \arrow[Leftarrow, phantom, shift right=6pt,
        from= foo, to=bar, "\iso_{\lambda}"]
    \\
    \& A
      \arrow[r, swap, "f"]
    \& C
  \end{tikzcd}
\]
\item[(BP2)]
  Given $1$-cells $u,v\colon X \toto P$ and
  $2$-cells $\alpha_i\maps d_i u \To d_i v$
  such that
  \[
    \begin{tikzcd}
      [column sep=large,row sep=large,
        ampersand replacement=\&]
      f d_0 u
        \arrow[r,"f \dot \alpha_0"]
        \arrow[d,swap,"\pi \dot u"]
        \arrow[dr, phantom, bend left=25,
          ""{name=foo}]
        \arrow[dr, phantom, bend right= 25,
          ""{name=bar}] \&
      f d_0 v
        \arrow[d,"\pi \dot v"]
      \\
      g d_1 u
        \arrow[r,swap,"g \dot \alpha_1"]
         \&
      g d_1 v
    \end{tikzcd}
    \text{,}
  \]
  then there is a unique $\beta\maps u \To v$ such that
  each $\alpha_i = d_i\dot\beta$.
\end{enumerate}

The two conditions (BP1) and (BP2) together are equivalent to saying that the functor
\[
\KK(X,P) \xraw{\eqv} \KK(X,A) {\subb{\times}{\KK(X,C)}} \KK(X,B)
\text{,}
\]
obtained from post-composition by the pseudo-cone $\lang d_0, \pi, d_1 \rang$,
is an equivalence of categories.
The right hand side here is an isocomma category.

Note the distinction from pseudopullbacks,
for which the equivalence is an isomorphism of categories.
\end{rem}

\begin{defn}\label{def:bicarrable}
(cf. Definition~\ref{def:carrable}.)
A $1$-cell $x\maps\xobar \to \xubar$ in $\KK$ is \strong{bicarrable} whenever a bipullback of $p$ along any other $1$-cell $\fubar$ exists in $\KK$.
We frequently use the diagram below to represent a chosen such bipullback:
\[
\begin{tikzcd}[column sep=large,row sep=large]
{\strund{f}\xobar}
  \arrow[r,"\fobar"]
  \arrow[d,swap,"\strund{f}x"]
  \arrow[dr, phantom, bend left=35, ""{name=foo}]
  \arrow[dr, phantom, bend right=35, ""{name=bar}]&
{\xobar}
  \arrow[d,"x"]
\\
{\yubar}
  \arrow[r,swap,"\fubar"]
  \arrow[Rightarrow, phantom, "\ftdar",shift left=0.5,from=bar,to=foo] &
\xubar
\end{tikzcd}
\]
where the 2-cell $\ftri$ is an iso-2-cell.

Similarly, we say $x$ is \strong{pseudocarrable} if pseudopullbacks exist.
\end{defn}

Of course, bipullbacks are defined up to equivalence and
the class of bicarrable $1$-cells is closed under bipullback.

An important fact in $\ETopos$ is that all bounded geometric morphisms are bicarrable~\cite[B3.3.6]{johnstone:elephant1}.

\begin{cons}
\label{cons:display-sub-2-cat}
Suppose $\KK$ is a 2-category.
Let $\CD$ be a chosen class of bicarrable $1$-cells in $\KK$,
which we shall call ``display $1$-cells'',
with the following properties.
\begin{itemize}
\item
  Every identity 1-cell is in $\CD$.
\item
  If $x\maps \xobar \to \xubar$ is in $\CD$, and $\fubar\maps\yubar\to\xubar$ in $\KK$,
  then there is some bipullback $y$ of $x$ along $\fubar$ such that $y\in\CD$.
\end{itemize}
We form a $2$-category $\KK_{\CD}$ whose 0-cells
are the elements $x\in \CD$,
and whose 1-cells and 2-cells are taken in exactly
the same manner as for $\GTopos$
(Definition~\ref{def:GTop}),
using elements of $\CD$ for bounded geometric morphisms
and 1-cells and 2-cells in $\KK$
for geometric morphisms and geometric transformations.
\end{cons}

Notice that $\KK_{\CD}$ is a sub-2-category of the 2-category
$\arrK \eqdef \Fun_{ps}(\walkarr, \KK)$, where the latter
consists of (strict) 2-functors,
pseudo-natural transformations and modifications from the interval category (aka free walking arrow category) $\walkarr$.
There is a (strict) 2-functor $\ccod \maps \arrK \to \KK$ which takes 0-cell $x$
(as in above) to its codomain $\xubar$,
a $1$-cell $f$ to $\fubar$ and a $2$-cell $\pair{\alobar}{\alubar}$ to $\alubar$.
The relationship between $\KK$, $\KK_{\CD}$, and $\arrK$ is illustrated
in the following commutative diagram of 2-categories and 2-functors:
\[
\begin{tikzpicture}[commutative diagrams/every diagram]
\matrix[matrix of math nodes, name=m, commutative diagrams/every cell] {
\KK_{\CD} &  &  \arrK \\
& \KK & \\};
\path[commutative diagrams/.cd, every arrow, every label]
(m-1-1) edge[commutative diagrams/hook] (m-1-3)
edge node[swap] {$\ccod$} (m-2-2)
(m-1-3) edge node {$\ccod$} (m-2-2);
\end{tikzpicture}
\]

We now examine cartesian 1-cells and 2-cells of
$\KK_{\CD}$ with respect to $\ccod \maps \KK_{\CD} \to \KK$,
following the definitions of~\cite[3.1]{buckley:fibred-bicat}.
Note that, although we deal only with 2-categories
and 2-functors between them,
we follow the bicategorical definitions,
in which uniqueness appears only at the level of 2-cells.

\begin{defn}
\label{defn:weakly-cart}
Suppose $P \maps \KX \to \KC$ is a $2$-functor.

\begin{enumerate}[label=(\roman*)]
\item A $1$-cell $f\maps y \to x$ in $\KX$ is \strong{cartesian} with respect to $P$ whenever for each 0-cell $w$ in $\KX$ the following commuting square is a bipullback diagram in 2-category $\Cat$ of categories.
\[
\begin{tikzpicture}[commutative diagrams/every diagram]
\matrix[matrix of math nodes, name=m, commutative diagrams/every cell, row sep=3em, column sep= 3em]
{
{\mathscr{X}(w,y)} & {\mathscr{X}(w,x)} \\
{\mathscr{C}(P w, P y)} & {\mathscr{C}(P w, P x)} \\
};
\path[commutative diagrams/.cd, every arrow, every label]
(m-1-1) edge node {$f_{*}$} (m-1-2)
(m-1-2) edge node {$P_{w,x}$} (m-2-2)
(m-1-1) edge node[swap] {$P_{w,y}$} (m-2-1)
(m-2-1) edge node[below] {$P(f)_{*}$} (m-2-2);
\node at (0,0.75) {\tiny bi.p.b.};
\end{tikzpicture}
\]
  This amounts to requiring that, for every object $w$,
  the functor
  \[
    \left\langle P_{w,y}, f_{\ast}\right\rangle \maps
      \mathscr{X}(w,y) \to P(f)_{\ast}\downarrow_{\iso} P_{w,x}
  \]
  should be an equivalence of categories,
  where the category on the right is the isocomma.
  (Note that the image of $\mathscr{X}(w,y)$
  has identities in the squares, not isos.)
\item A 2-cell $\alpha\maps f \To g\maps y \to x$ in $\KX$ is \strong{cartesian} if it is cartesian as a $1$-cell with respect to the functor $P_{yx}\maps \KX(y,x) \to \KC(P y,P x)$.
\end{enumerate}

\end{defn}

The following lemma, which proves certain immediate results about cartesian $1$-cells and $2$-cells, will be handy in the proof of Proposition~\ref{pro:Johnstone-fib-as-cod-fibrant-objects}. The statements are similar to the case of $1$-categorical cartesian morphisms (e.g. in definition of Grothendieck fibrations) with the appropriate weakening of equalities by isomorphisms and isomorphisms by equivalences. They follow straightforwardly from the definition above, however for more details see~\cite{buckley:fibred-bicat}.
In what follows, in keeping with standard nomenclature of theory of categorical fibrations, we regard \textit{vertical} $1$-cells (\resp vertical $2$-cells) as those $1$-cells (\resp $2$-cells) in $\KX$ which are mapped to identity $1$-cells (\resp $2$-cells) in $\KC$ under $P$.

\begin{lem}
\label{lem:closure-properties-of-cart-1,2-cells}
Suppose $P \maps \KX \to \KC$ is a $2$-functor between 2-categories.

\begin{enumerate}[label=(\roman*)]
\item \label{lem:item:closed-under-composition}
  Cartesian $1$-cells (with respect to $P$) are closed under composition and cartesian 2-cells are closed under vertical composition.
\item \label{lem:item:left-closed} Suppose $k \maps w \to y$ and $f \maps y \to x$ are $1$-cells in $\KX$ . If $f$ and $fk$ are cartesian then $k$ is cartesian. The same is true with $2$-cells and their vertical composition.
\item \label{lem:item:identity} Identity $1$-cells and identity $2$-cells are cartesian.
\item\label{lem:item:equivalence} Any equivalence $1$-cell is cartesian.
\item\label{lem:item:iso-2-cell} Any iso $2$-cell is cartesian.
\item\label{lem:item:vert-cart-2-cell} Any vertical cartesian 2-cell is an iso-$2$-cell.
\item \label{lem:item:closed-under-iso-2-cell} Cartesian $1$-cells are closed under isomorphisms:
if $f \iso g$ then $f$ is cartesian if and only if $g$ is cartesian.
\end{enumerate}
\end{lem}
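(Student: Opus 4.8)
The plan is to reduce every clause to the defining condition of Definition~\ref{defn:weakly-cart}: a $1$-cell $f\maps y\to x$ is cartesian for $P$ precisely when, for each $0$-cell $w$, the strictly commuting square with top $f_{\ast}\maps\KX(w,y)\to\KX(w,x)$, bottom $P(f)_{\ast}$, and vertical legs $P_{w,y}$ and $P_{w,x}$ is a bipullback in $\Cat$; the $2$-cell statements are the same condition applied to the functor $P_{yx}\maps\KX(y,x)\to\KC(Py,Px)$, in which case every hom-category in sight is discrete, so ``bipullback'' reduces to ``pullback of sets''. Throughout I would lean on three standard facts about bipullbacks in $\Cat$, all recorded in~\cite{buckley:fibred-bicat}: \textbf{(a)} the pasting lemma -- a horizontal composite of two bipullback squares is a bipullback, and conversely if a composite square and its right-hand half are bipullbacks then so is its left-hand half; \textbf{(b)} a commuting square two of whose parallel edges are equivalences of categories is a bipullback; \textbf{(c)} being a bipullback is invariant under replacing the four functors of a square by pointwise-isomorphic ones, compatibly with the square.

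Clauses (i) and (ii) are then pure pasting. For $1$-cells $g\maps z\to y$ and $f\maps y\to x$ and any $w$, functoriality of $P$ identifies the square attached to $fg$ at $w$ with the horizontal composite of the $g$-square at $w$ followed by the $f$-square at $w$, using $(fg)_{\ast}=f_{\ast}g_{\ast}$ and $P(fg)_{\ast}=P(f)_{\ast}P(g)_{\ast}$. Fact (a), read forwards, gives (i); read as a cancellation (with $k$ in the statement playing the role of $g$), it gives (ii). The $2$-cell versions follow by running the identical argument with $P$ replaced by $P_{yx}$, where (a) is just the pasting lemma for pullbacks of sets.

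For (iii), the square attached to $\id_x$ at each $w$ has identities along top and bottom, hence is a strict pullback and so a bipullback; alternatively, an identity is an equivalence, so (iii) is a special case of (iv). For (iv), if $f$ is an equivalence in $\KX$ then $f_{\ast}$ is an equivalence of categories (post-compose with a quasi-inverse of $f$), and since $P$ is a $2$-functor $P(f)$ is an equivalence in $\KC$, so $P(f)_{\ast}$ is an equivalence too; fact (b) then makes the $f$-square at every $w$ a bipullback. Clause (v) is the case of (iv) for the functor $P_{yx}$: an iso $2$-cell is an isomorphism, hence an equivalence, in the category $\KX(y,x)$, and $P$ preserves iso $2$-cells. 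Clause (vii) is fact (c): an iso $2$-cell $f\iso g$ induces at each $w$ a natural isomorphism $f_{\ast}\iso g_{\ast}$ on tops and $P(f)_{\ast}\iso P(g)_{\ast}$ on bottoms, with identities on the two sides, so the $f$-square is a bipullback iff the $g$-square is.

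The one clause needing its own short argument is (vi). A vertical cartesian $2$-cell $\alpha\maps f\To g$ has $Pf=Pg$ and $P\alpha$ an identity, so for every $1$-cell $h\maps y\to x$ the bottom edge of the pullback square expressing cartesianness of $\alpha$ at $h$ is an identity; a pullback of sets with identity bottom edge has bijective top edge, whence post-composition with $\alpha$ is a bijection from the set of $2$-cells $h\To f$ to the set of $2$-cells $h\To g$ for every $h$. A Yoneda argument inside $\KX(y,x)$ -- take $h=g$ to produce a right inverse of $\alpha$, then $h=f$ to see it is also a left inverse -- shows $\alpha$ is invertible. I expect (vi) to be the only step that is not essentially a one-line appeal to (a)--(c); the rest is bookkeeping, for which one can consult~\cite{buckley:fibred-bicat} for the missing details.
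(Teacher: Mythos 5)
Your proof is correct. The paper itself gives no argument for this lemma --- it merely asserts that the statements ``follow straightforwardly from the definition'' and defers to \cite{buckley:fibred-bicat} --- and your reduction of every clause to the bipullback-square form of Definition~\ref{defn:weakly-cart} (pasting and cancellation for (i)--(ii), equivalences and iso-invariance of bipullbacks for (iii)--(v) and (vii), and the discrete-hom/Yoneda argument for (vi)) is exactly the routine verification being alluded to.
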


\begin{rem}
\label{rem:explicit-material-of-weak-cart}
\cite[3.1]{buckley:fibred-bicat} also unwinds the definition above
to give a more explicit description of cartesian $1$-cells,
and in particular of the universal properties of pullbacks involved.
A $1$-cell $f\maps y \to x$ is $P$-cartesian if and only if the following hold.
\begin{enumerate}[label=(\roman*)]
\item \label{item:existence-of-explicit-material-of-weak-cart}
For any $1$-cells $g\maps w \to x$ and $\hubar\maps P(w) \to P(y)$ and any iso-2-cell $\alubar \maps Pf \oo \hubar \To Pg$, there exist a $1$-cell $h$ and iso-2-cells $\betubar\maps P(h) \To \hubar$ and $\alpha \maps f h \To g$ such that $P(\alpha) = \alubar \oo (P(f) \dot \betubar)$.
In this situation we call $(h, \betubar)$ a \strong{weak lift} of $\hubar$. If $\betubar$ is the identity 2-cell then we simply call $h$ a \strong{lift} of $\hubar$.
\item \label{item:uniqueness-of-explicit-material-of-weak-cart}
Given any 2-cell $\sigma\colon g \To g' \colon w \toto x$ and $1$-cells $\hubar,\hubar'\maps P(w) \toto P(x) $ and iso-2-cells $\alubar\colon P(f) \oo \hubar \To P(g)$, $\alubar' \colon P(f) \oo \hubar' \To P(g)$ together with any lifts $\pair{h}{\betubar}$ and $\pair{h'}{\betubar'}$ of $\hubar$ and $\hubar'$ respectively, then for any $2$-cell $\delubar\colon \hubar \To \hubar' \colon P(w) \toto P(x)$ satisfying $\alubar' \oo (P(f) \dot \delubar ) = P(\sigma) \oo \alubar$, there exists a unique $2$-cell $\delta \maps h \To h'$ such that $\alpha' \oo (f \dot \delta) = \sigma \oo \alpha$ and $\delubar \dot \betubar = \betubar' \oo P(\delta)$.
\begin{equation}
\label{diag:weak-lift}
\begin{tikzpicture}[scale=0.8, every node/.style={transform shape}]
\node(c1) at (-1,2.5) {$w$};
\node(c2) at (1,-0.5) {$y$};
\node(c3) at (4,-0.5) {$x$};
\node(c4) at (1,-1.2) {} ;
\node(a1) at (2.5, -0.6) {};
\node(a2) at (3.5, 2.2) {};
\draw[->,dashed] (c1) to[bend right=30] node(f2){} node[below left]{$h$} (c2);
\draw[->] (c1) to[bend left=30] node(f4){} node[above]{$g$} (c3);
\draw[->] (c2) to node(g1){} node[below]{$f$} (c3);
\node(d1) at (-1,-4.1) {$P w$};
\node(d2) at (1,-7.1) {$P y$};
\node(d3) at (4,-7.1) {$P x$};
\node(d4) at (1,-2.8) {} ;
\node(e1) at (2.5, -7) {};
\node(e2) at (3.5, -4.6) {};
\draw[->] (d1) to[bend right=70] node(h1){} node[below,xshift=-0.3cm]{$P h$} (d2);
\draw[->] (d1) to[bend left=1] node(h2){} node[above right]{$\hubar$} (d2);
\draw[->] (d1) to[bend left=30] node(h4){} node[above]{$P g$} (d3);
\draw[->] (d2) to node(k1){} node[below]{$P f$} (d3);
\draw[double,double equal sign distance,-implies, dashed, shorten <=2, shorten >=2] (h1) to[bend right=1]  node[below right, pos=0.55]{$\tiny{\betubar}$} (h2);
\draw[|->] (c4) to node(h5){} node[left]{$P$} (d4);
\draw[double,double equal sign distance,-implies,dashed, shorten <=12, shorten >=7] (c2) to[bend right=1]  node[below right, pos=0.6]{$\tiny{\alpha}$} (f4);
\draw[double,double equal sign distance,-implies, shorten <=12, shorten >=7] (d2) to[bend right=1]  node[below right, pos=0.6]{$\tiny{\alubar}$} (h4);
\end{tikzpicture}
\quad \ \ \ \  \quad
\begin{tikzpicture}[scale=0.8,every node/.style={transform shape}]
\node(c1) at (-1,2.5) {$w$};
\node(c2) at (1,-0.5) {$y$};
\node(c3) at (4,-0.5) {$x$};
\node(c4) at (1,-1.2) {} ;
\node(a1) at (2.5, -0.6) {};
\node(a2) at (3.5, 2.2) {};
\draw[->] (c1) to[bend right=20] node(f1){} node[midway, fill=white]{$h$} (c2);
\draw[->] (c1) to[bend left=40] node(f2){} node[midway, fill=white]{$h'$} (c2);
\draw[->] (c1) to[bend left=20] node(f4){} node[midway, fill=white]{$g$} (c3);
\draw[->] (c1) to[bend left=80] node(f5){} node[midway, fill=white]{$g'$} (c3);
\draw[double,double equal sign distance,-implies, shorten <=3, shorten >=3] (f4) to[bend right=1]  node[right, pos=0.3]{$\tiny{\sigma}$} (f5);
\draw[double,double equal sign distance,-implies, dashed, shorten <=3, shorten >=3] (f1) to[bend right=1]  node[above, pos=0.3]{$\tiny{\delta}$} (f2);
\draw[->] (c2) to node(g1){} node[below]{$f$} (c3);
\draw[double,double equal sign distance,-implies, shorten <=7, shorten >=7] (a1) to[bend left=15] node[right, pos=0.6]{$\tiny \alpha'$} (a2);
\node(d1) at (-1,-4.1) {$P w$};
\node(d2) at (1,-7.1) {$P y$};
\node(d3) at (4,-7.1) {$P x$};
\node(d4) at (1,-2.8) {} ;
\node(e1) at (2.5, -7) {};
\node(e2) at (3.5, -4.6) {};
\draw[->] (d1) to[bend right=80] node(h1){} node[below,yshift=-0.3cm]{$Ph$} (d2);
\draw[->] (d1) to[bend right=20] node(h2){} node[midway, fill=white]{$\hubar$} (d2);
\draw[->] (d1) to[bend left=40] node(h3){} node[midway, fill=white]{$\hubar'$} (d2);
\draw[->] (d1) to[bend left=20] node(h4){} node[midway, fill=white]{$P g$} (d3);
\draw[->] (d1) to[bend left=70] node(h5){} node[midway, fill=white]{$P g'$} (d3);
\draw[->] (d2) to node(k1){} node[below]{$P f$} (d3);
\draw[double,double equal sign distance,-implies, shorten <=2, shorten >=2] (h1) to[bend right=1]  node[above, pos=0.3]{$\tiny{\betubar}$} (h2);
\draw[double,double equal sign distance,-implies, shorten <=3, shorten >=3] (h2) to[bend right=1]  node[above, pos=0.3]{$\tiny{\delubar}$} (h3);
\draw[double,double equal sign distance,-implies, shorten <=3, shorten >=3] (h4) to[bend right=1]  node[right, pos=0.3]{$\tiny{P \sigma}$} (h5);
\draw[|->] (c4) to node(h5){} node[left]{$P$} (d4);
\draw[double,double equal sign distance,-implies,shorten <=12, shorten >=8] (c2) to[bend right=1]  node[right, pos=0.5]{$\tiny{\alpha}$} (f4);
\draw[double,double equal sign distance,-implies, shorten <=12, shorten >=8] (d2) to[bend right=1]  node[right, pos=0.5]{$\tiny{\alubar}$} (h4);
\draw[double,double equal sign distance,-implies, shorten <=7, shorten >=7] (e1) to[bend left=15] node[right, pos=0.6]{$\tiny \alubar'$} (e2);
\end{tikzpicture}
\end{equation}
\end{enumerate}
\end{rem}

Also, in elementary terms, a 2-cell $\alpha \colon f_0 \To f_1 \colon y\toto x$ is cartesian iff for any given $1$-cell $e\colon y \to x$ and any 2-cells $\beta \colon e \To f_1$ and $\gamubar \maps P(f_0) \To P(e)$ with $P(\alpha) = P(\beta) \oo \gamubar$, there exists a unique 2-cell $\gamma$ over $\gamubar$ such that $\alpha = \beta \oo \gamma$.
\vspace{10pt}
\[
\xy
(0,15)*+{\bullet}="1";
(0,-15)*+{\bullet}="2";
{\ar@{.>} "1";"2"};
{\ar@/_3.0pc/@{->}_{f_0} "1";"2"};
{\ar@/^3.0pc/@{->}^{f_1} "1";"2"};
(4.5,-5)*+{}="D"; (-4.5,-5)*+{}="C";
(-13,0)*+{}="B"; (12.5,0)*+{}="E";
(0,5)*+{}="A";
%{\ar@{=>} "C";"D"};
{\ar@/_1.0pc/@{=>}^{\alpha} "B";"E"};
%{\ar@{=>} "D";"E"};
{\ar@{==>}^{\gamma} "B";"A"};
{\ar@{=>}^{\beta} "A";"E"};
(0,18)*{y};
(0,-18)*{x};
\endxy
\quad \mapsto \quad
\xy
(0,15)*+{\bullet}="1";
(0,-15)*+{\bullet}="2";
{\ar@{.>} "1";"2"};
{\ar@/_3.0pc/@{->}_{P f_0} "1";"2"};
{\ar@/^3.0pc/@{->}^{P f_1} "1";"2"};
(4.5,-5)*+{}="D"; (-4.5,-5)*+{}="C";
(-13,0)*+{}="B"; (12.5,0)*+{}="E";
(0,5)*+{}="A";
%{\ar@{=>} "C";"D"};
{\ar@/_1.0pc/@{=>}^{P \alpha} "B";"E"};
%{\ar@{=>} "D";"E"};
{\ar@{=>}^{\gamubar} "B";"A"};
{\ar@{=>}^{P \beta} "A";"E"};
(0,18)*{P y};
(0,-18)*{P x};
\endxy
\]

\begin{rem}
The Definition~\ref{defn:weakly-cart} may at first sight seem a bit daunting. Nonetheless the idea behind it is simple; We often think of $\KX$ as bicategory over $\KC$ with richer structures (in practice often times as a fibred bicategory). In this situation, $f\maps y \to x$ being cartesian in $\KX$ means that we can reduce the problem of lifting of any $1$-cell $g$ (with same codomain as $f$) along $f$ (up to an iso-2-cell) to the problem of lifting of $P(g)$ along $P(f)$ in $\KC$ (up to an iso-2-cell). The latter is easier to verify since $\KC$ is a poorer category than $\KX$. The second part of definition says that we also have the lifting of 2-cells along $f$ and the lifted 2-cells are coherent with iso-2-cells obtained from lifting of their respective $1$-cells. This implies the solution to the lifting problem is unique up to a (unique) coherent iso-2-cell.
\end{rem}

We now define a notion that, on the one hand,
conveniently leads to a characterization of when $P$ is a fibration;
but, on the other hand, turns out in the next section to be useful
even when $P$ is not a fibration.

\begin{defn}
\label{def:Johnstone-fib-via-Buckley}
Let $P\maps \KE \to \KB$ be a 2-functor.
We define an object $e$ of $\KE$ to be \strong{fibrational}
iff
\begin{enumerate}[label=(\subscript{B}{{\arabic*}})]
\item \label{def:JfvB:cart-lift-1-cell}
  every $f \maps b' \to b=P(e)$ has a cartesian lift,
\item \label{def:JfvB:cart-lift-2-cell}
  for every 0-cell $e'$ in $\KE$, the functor
  \[
    P_{e',e}\maps \KE(e',e) \to \KB(P(e'), P(e))
  \]
  is a Grothendieck fibration of categories, and
\item \label{def:JfvB:whiskering-cart-2-cell}
  whiskering on the left preserves cartesianness
  of 2-cells in $\KE$ between morphisms with codomain $e$.%
\end{enumerate}
\end{defn}

\begin{rem}
~\cite[Definition 3.1.5]{buckley:fibred-bicat} defines $P$ to be a fibration
  (of bicategories) if
  \begin{enumerate}
  \item
    for every $e$ in $\KE$,
    every $f \maps b' \to b=P(e)$ has a cartesian lift,
  \item
    for all 0-cells $e',e$ in $\KE$, the functor
    \[
      P_{e',e}\maps \KE(e',e) \to \KB(P(e'), P(e))
    \]
    is a Grothendieck fibration of categories, and
  \item
    horizontal composition of 2-cells preserves cartesianness.
  \end{enumerate}
  It is then clear that $P$ is a fibration iff every object of $\KE$
  is fibrational. It is also noteworthy that conditions \ref{def:JfvB:cart-lift-2-cell} and \ref{def:JfvB:whiskering-cart-2-cell} together make the $2$-functor $P_{-,e} \maps \op\KE \to \comma{\Cat}$ lift to $P_{-,e} \maps \op\KE \to \Fib$ for every $e \in \KE$.
\end{rem}

\begin{prop}
\label{pro:cart-1-cell-aka-bipullback}
A $1$-cell in $\KK_{\CD}$ is $\ccod$-cartesian if and only if it is a bipullback square in $\KK$.
\begin{equation}
\label{diag:2-cod-cart}
\begin{tikzpicture}[baseline=(c2.base),scale=1,every node/.style={transform shape}]
\node(c0) at (0,2) {$\yobar$};
\node(c1) at (2,2) {$\xobar$};
\node(c2) at (0,0) {$\yubar$};
\node(c3) at (2,0) {$\xubar$};
\node(c4) at (1,-0.7) {} ;
\node(d2) at (0,-3) {$\yubar$};
\node(d3) at (2,-3) {$\xubar$};
\node(d4) at (1,-2.5) {} ;
\node(d5) at (1,1) {$\ftdar$};
\draw[->] (c2) to node(g1){} node[below]{$\fubar$} (c3);
\draw[->,dashed] (c0) to node(g1){} node[left]{$y$} (c2);
\draw[->,dashed] (c0) to node(g1){} node[above]{$\fobar$} (c1);
\draw[->] (c1) to node(g1){} node[right]{$x$} (c3);
\draw[->] (d2) to node(k1){} node[below]{$\fubar$} (d3);
\draw[|->] (c4) to node(h5){} node[left]{$\ccod$} (d4);
\draw +(.25,1.45) -- +(.55,1.45)  -- +(.55,1.75);
\end{tikzpicture}
\end{equation}
\end{prop}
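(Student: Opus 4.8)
The plan is to match up the $1$-categorical universal properties on the two sides. By Definition~\ref{defn:weakly-cart}, the $1$-cell $f=(\fobar,\ftri,\fubar)\maps y\to x$ is $\ccod$-cartesian precisely when, for every $0$-cell $w$ of $\KK_{\CD}$ with underlying display map $w\maps\wobar\to\wubar$, the comparison functor $\langle\ccod_{w,y},f_\ast\rangle\maps\KK_{\CD}(w,y)\to\fubar_\ast\downarrow_\iso\ccod_{w,x}$ is an equivalence of categories; and by Remark~\ref{rem:recall-bipullback}, the square \eqref{diag:2-cod-cart} with its filler $\ftri$ is a bipullback in $\KK$ precisely when, for every $0$-cell $W$ of $\KK$, the functor $\KK(W,\yobar)\to\KK(W,\xobar){\subb{\times}{\KK(W,\xubar)}}\KK(W,\yubar)$ into the isocomma, induced by the pseudo-cone $\langle\fobar,\ftri,y\rangle$, is an equivalence. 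I would prove the equivalence of these two families of conditions.

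For the ``only if'' direction, suppose $f$ is $\ccod$-cartesian and fix a $0$-cell $W$ of $\KK$. Then $\id_W$ is a $0$-cell of $\KK_{\CD}$ (identities being display $1$-cells) with $\ccod(\id_W)=W$, and unwinding Definition~\ref{def:GTop} shows that projection to the first component, $(\hobar,\eta,\hubar)\mapsto\hobar$, gives equivalences $\KK_{\CD}(\id_W,y)\eqv\KK(W,\yobar)$ and $\KK_{\CD}(\id_W,x)\eqv\KK(W,\xobar)$, under which $\ccod_{\id_W,y}$ and $\ccod_{\id_W,x}$ correspond to post-composition by $y$ and by $x$ and $f_\ast$ corresponds to post-composition by $\fobar$. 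Hence the bipullback square of hom-categories at $w=\id_W$ furnished by cartesianness of $f$ transports to the image under $\KK(W,-)$ of the square \eqref{diag:2-cod-cart}, which is therefore a bipullback in $\Cat$. As $W$ was arbitrary, \eqref{diag:2-cod-cart} is a bipullback in $\KK$.

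For the ``if'' direction, suppose \eqref{diag:2-cod-cart} is a bipullback in $\KK$; I would verify the two explicit conditions \ref{item:existence-of-explicit-material-of-weak-cart} and \ref{item:uniqueness-of-explicit-material-of-weak-cart} of Remark~\ref{rem:explicit-material-of-weak-cart}. For \ref{item:existence-of-explicit-material-of-weak-cart}: given $g=(\gobar,\gtri,\gubar)\maps w\to x$, a $1$-cell $\hubar\maps\wubar\to\yubar$, and an iso-$2$-cell $\alubar\maps\fubar\hubar\iso\gubar$, the triple $\bigl(\gobar,\ \hubar w,\ (\alubar^{-1}\dot w)\oo\gtri\bigr)$ is an iso-cone over $\xobar\xraw{x}\xubar\xlaw{\fubar}\yubar$ with vertex $\wobar$; (BP1) then yields $\hobar\maps\wobar\to\yobar$ with iso-$2$-cells $\gamma_0\maps\fobar\hobar\iso\gobar$ and $\gamma_1\maps y\hobar\iso\hubar w$. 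Set $h\eqdef(\hobar,\gamma_1,\hubar)\maps w\to y$. A direct reading of the definitions of composition and of $2$-cells in $\KK_{\CD}$ (Definition~\ref{def:GTop}) shows that the pasting identity supplied by (BP1) is exactly the coherence law making $\alpha\eqdef(\gamma_0,\alubar)$ a $2$-cell $fh\To g$; so $h$ is a lift of $\hubar$ with $\betubar$ an identity, and $\alpha$ an iso-$2$-cell with $\ccod(\alpha)=\alubar$. For \ref{item:uniqueness-of-explicit-material-of-weak-cart}: given $\sigma\maps g\To g'$, lifts $h=(\hobar,\eta,\hubar)$, $h'=(\hobar',\eta',\hubar')$ of $\hubar$, $\hubar'$ with their isos $\alpha\maps fh\iso g$, $\alpha'\maps fh'\iso g'$, and a $2$-cell $\delubar\maps\hubar\To\hubar'$ downstairs satisfying the compatibility of that clause, I would feed (BP2) the pair of $2$-cells $\fobar\hobar\iso\gobar\To\gobar'\iso\fobar\hobar'$ (assembled from $\alpha$, $\sigma$, $\alpha'$) and $y\hobar\xraw{\eta}\hubar w\xraw{\delubar\dot w}\hubar'w\xraw{(\eta')^{-1}}y\hobar'$; a routine diagram chase using the coherence squares of $\alpha$, $\alpha'$, $\sigma$ together with the hypothesis on $\delubar$ shows this pair meets the hypothesis of (BP2), which then returns a unique $2$-cell $\rho\maps\hobar\To\hobar'$. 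One checks that $\delta\eqdef(\rho,\delubar)$ is a $2$-cell $h\To h'$ with the required properties and that it is the only such: $\ccod(\delta)$ is forced to be $\delubar$, and then both $\fobar\dot\rho$ and $y\dot\rho$ are forced --- one by $\alpha'\oo(f\dot\delta)=\sigma\oo\alpha$, the other by the $\KK_{\CD}$-coherence law --- so uniqueness in (BP2) pins $\rho$ down. (The case of weak lifts with non-identity $\betubar$ reduces to the above by composing with suitable iso-$2$-cells.)

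The step I expect to be the main obstacle is the ``if'' direction. Every choice is dictated by the data, but one must keep track of the iso-$2$-cells $\ftri$, $\eta$, $\gtri$, $\gamma_0$, $\gamma_1$ carefully enough both to see that the pasting identity produced by (BP1) is literally the $\KK_{\CD}$-coherence condition, and to check that the pair of $2$-cells assembled for (BP2) genuinely satisfies its hypothesis; this is delicate bookkeeping rather than a single clean step.
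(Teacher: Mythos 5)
Your proposal is correct and follows essentially the same route as the paper: both directions hinge on testing cartesianness at the identity display maps $\id_W$, and your ``if'' direction (building the lift from the iso-cone $(\gobar,\ \hubar w,\ (\alubar^{-1}\dot w)\oo\gtri)$ via (BP1), then pinning down the 2-cell via (BP2) and the coherence square) reproduces the paper's argument almost verbatim, including the reduction of weak lifts to strict ones. The only cosmetic difference is in the ``only if'' direction, where you transport the hom-category bipullback square along the equivalences $\KK_{\CD}(\id_W,y)\eqv\KK(W,\yobar)$ instead of directly verifying essential surjectivity and full faithfulness of the cone-comparison functor $F_c$ as the paper does.
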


Before giving the proof there is one step we take to simplify the proof.
\begin{lem}
\label{lem:strict-lift-replacement}
Suppose $\hubar \maps \wubar \to \yubar$ is a $1$-cell in $\KK$. Any weak lift $(h_0, \betubar)$ of $\hubar$ can be replaced by a lift $h$ in which $\betubar$ is replaced by the identity 2-cell. Therefore, conditions (i) and (ii) in Remark~\ref{rem:explicit-material-of-weak-cart} can be rephrased to simpler conditions in which $\betubar$ is the identity 2-cell.
\end{lem}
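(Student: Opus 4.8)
The plan is to establish the key fact that the codomain $2$-functor $\ccod\maps\KK_{\CD}\to\KK$ lifts iso-$2$-cells along its $1$-cells: given any $1$-cell $h_0\maps w\to y$ of $\KK_{\CD}$ and any iso-$2$-cell $\betubar\maps\ccod(h_0)\To\hubar$ of $\KK$, there are a $1$-cell $h\maps w\to y$ of $\KK_{\CD}$ with $\ccod(h)=\hubar$ on the nose, together with an iso-$2$-cell $\gamma\maps h_0\To h$ of $\KK_{\CD}$ with $\ccod(\gamma)=\betubar$. Granting this, every assertion of the lemma follows by transporting the ambient data along $\gamma$.

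For the construction of $h$: by Definition~\ref{def:GTop}, $h_0$ is a triple consisting of an upstairs geometric morphism, the downstairs geometric morphism $\ccod(h_0)$, and a structural iso-$2$-cell $\vartheta_0$. I would define $h$ to have the same upstairs geometric morphism as $h_0$, to have $\hubar$ as its downstairs geometric morphism, and to have as structural iso the composite $(\betubar\dot w)\oo\vartheta_0$. This is an iso since $\betubar$ and $\vartheta_0$ are, so $h$ is a genuine $1$-cell of $\KK_{\CD}$ --- there are no conditions on $1$-cells beyond those of Definition~\ref{def:GTop} --- and $\ccod(h)=\hubar$ strictly.

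For the comparison $2$-cell: I would take $\gamma\maps h_0\To h$ to be the pair whose upstairs component is the identity and whose downstairs component is $\betubar$. Substituting $\alobar=\id$ and $\alubar=\betubar$, the square required of a $2$-cell of $\KK_{\CD}$ becomes precisely the equation by which the structural iso of $h$ was defined, so $\gamma$ is well-formed; both its components are isos, hence $\gamma$ is an iso, and $\ccod(\gamma)=\betubar$ by construction. This is the crux; the only delicacy is to pin down the correct orientation of the compatibility square in Definition~\ref{def:GTop}, after which everything reduces to routine bookkeeping with whiskering and interchange.

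It then remains to read off the two reformulations in Remark~\ref{rem:explicit-material-of-weak-cart}. A weak lift $(h_0,\betubar)$ of $\hubar$ carries a $2$-cell $\alpha_0\maps fh_0\To g$ with $\ccod(\alpha_0)=\alubar\oo(\ccod(f)\dot\betubar)$; putting $\alpha\eqdef\alpha_0\oo(f\dot\gamma^{-1})$ gives, by functoriality of whiskering and $\betubar\oo\betubar^{-1}=\id$, that $\ccod(\alpha)=\alubar$, so $(h,\id)$ together with $\alpha$ (and with $\ccod(h)=\hubar$ strictly) is a weak lift doing everything $(h_0,\betubar)$ did --- this is clause~\ref{item:existence-of-explicit-material-of-weak-cart} in the claimed simplified form, the converse implication being trivial since a lift is a weak lift. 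For clause~\ref{item:uniqueness-of-explicit-material-of-weak-cart} one replaces both weak lifts $(h,\betubar)$ and $(h',\betubar')$ by strict ones via the corresponding $\gamma$ and $\gamma'$, conjugates a candidate lifted $2$-cell $\delta$ by $\gamma$ and $\gamma'$, and checks that both equational side conditions transport across this conjugation; existence and uniqueness are then inherited because conjugation by isos is a bijection on $2$-cells. I do not anticipate any genuine obstacle beyond this bookkeeping.
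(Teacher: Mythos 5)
Your construction is exactly the paper's: keep the upstairs component of $h_0$, take $\hubar$ downstairs, and paste $\betubar\dot w$ onto the structural iso of $h_0$ to get the new structural iso, then transport the accompanying $2$-cells. The only difference is presentational --- you package the transport as conjugation by an explicit iso-$2$-cell $\gamma=(\id,\betubar)$ in $\KK_{\CD}$, whereas the paper writes down the transported $\alpha=\pair{\alobar}{\alubar}$ directly --- so this is essentially the same proof, correctly carried out.
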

\begin{proof}
Define $\hobar = \hobar_0$, and $\htri = (\betubar \dot w) \oo \tri{h_0}$:
\[
\begin{tikzpicture}[baseline=(c2.base),scale=1,every node/.style={transform shape}]
\node(c0) at (0,2) {$\wobar$};
\node(c1) at (2,2) {$\yobar$};
\node(c2) at (0,0) {$\wubar$};
\node(c3) at (2,0) {$\yubar$};
\node(d1) at (1,1.3) {$\hztdar$};
\node(e1) at (1.1,-0.7) {};
\node(e2) at (1.1,0) {};
\draw[->] (c0) to node(f2){} node[above]{$\hobar_0$} (c1);
\draw[->] (c2) to[bend right=60] node(f1){} node[below]{$\hubar$} (c3);
\draw[->] (c2) to[bend left=30] node(g1){} node[midway, fill=white]{$\hubar_0$} (c3);
\draw[->] (c0) to node(y){} node[left]{$w$} (c2);
\draw[->] (c1) to node(x){} node[right]{$y$} (c3);
\draw[double,double equal sign distance,-implies, shorten >=2] (e2) to node[left]{$\betubar$} (e1);
\end{tikzpicture}
\]
Then $h = \tricell{h}$ is indeed a lift of $\hubar$. Moreover, if $\alpha_0$ is a lift of $2$-cell $\alubar \maps \fubar \oo \hubar \To \gubar$ as in part (i) of Remark~\ref{rem:explicit-material-of-weak-cart}, then obviously $\alubar_0 = \alubar \oo (\fubar \dot \betubar)$, and it follows that $\alpha= \pair{\alobar}{\alubar}$ is a 2-cell in $\KK_{\CD}$ from $f\oo h$ to $g$ which lies over $\alubar$.
\end{proof}

\begin{proof}[of Proposition~\ref{pro:cart-1-cell-aka-bipullback}]
We first prove the only if part. Suppose that $f \maps y \to x$ is a cartesian $1$-cell in $\KK_{\CD}$.
For each object $c$ of $\KK$,
let us write $\mathrm{WCone}(c;x,\fubar)$
for the category of weighted cones (in the pseudo- sense)
from $c$ to the opspan $(x, \fubar)$,
in other words pairs of 1-cells
$\gobar \maps c \to \xobar$
and $\hubar\maps c \to \yubar$ as in diagram below,
and equipped with an iso-2-cell
$\gtri \maps x \oo \gobar \To \fubar \oo \hubar$.
We have chosen the notation so that if we define
$\gubar = \fubar\oo\hubar$,
and if we allow $c$ also to denote the identity on $c$
as 0-cell in $\KK_{\CD}$,
then $g \maps c \to x$ is a 1-cell in $\KK_{\CD}$.

Then for each $c$ we have a functor
$F_c \maps \KK(c,\yobar) \to \mathrm{WCone}(c;x,\fubar)$,
given by $\hobar \mapsto (\fobar\oo\hobar,y\oo\hobar)$,
with the iso-2-cell got by whiskering $\ftri$,
and we must show that each $F_c$ is an equivalence
of categories.

First we deal with essential surjectivity.
Since $f$ is cartesian we can lift $\hubar$ and the identity 2-cell $\fubar \oo \hubar = \gubar$
to a 1-cell $h\maps c \to y$ in $\KK_{\CD}$
with isomorphism
$\iota = \pair{\iotobar}{\id} \maps f \oo h \To g$,
where we have used Lemma~\ref{lem:strict-lift-replacement} to obtain $h$ as a lift rather than a weak lift.
\[
\begin{tikzpicture}[baseline=(c2.base),scale=1,every node/.style={transform shape}]
\node(c0) at (-1.7,2) {$c$};
\node(c0down) at (-1.7, 0) {$c$};
\node(c1) at (2,2) {$\xobar$};
\node(c2) at (0,0) {$\yubar$};
\node(c3) at (2,0) {$\xubar$};
\node(c4) at (0,2) {$\yobar$};
\node(c5) at (0.1,2) {};
\node(d0) at (1,1) {$\ftdar$};
\node(d1) at (-0.7,1.1) {$\htdar$};
\draw[->] (c2) to node(g1){} node[below]{$\fubar$} (c3);
\draw[-,double] (c0) to (c0down);
\draw[->] (c0down) to node[below]{$\hubar$} (c2);
\draw[->] (c0) to[bend left= 70] node(g3){} node[above]{$\gobar$} (c1);
\draw[->] (c4) to node(g4){} node[midway, fill=white]{$\fobar$} (c1);
\draw[->, dashed] (c0) to node(g5){} node[midway, fill=white]{$\hobar$} (c4);
\draw[->] (c1) to node(g6){} node[right]{$x$} (c3);
\draw[->] (c4) to node(g7){} node[right]{$y$} (c2);
\draw[double,double equal sign distance,-implies, shorten <=4, shorten >=4] (c5) to node[left]{$\iotobar$} (g3);
\end{tikzpicture}
\]

To prove that $F_c$ is full and faithful,
take any $1$-cells $\hobar$ and $\hobar'$ in $\KK$.
In the diagram above we can define $\hubar = y\oo\hobar$
and $\htri$ the identity 2-cell on $\hubar$ to get a 1-cell
$h\maps c \to y$ in $\KK_{\CD}$,
and similarly $h' \maps c \to y$.

Now suppose we have $2$-cells $\delubar \maps y \hobar \To y \hobar'$ and $\sigobar \maps \fobar \hobar \To \fobar \hobar'$ such that they form a weighted cone over $\fubar$ and $x$, \ie they satisfy compatibility equation
\[
  (\fubar \dot \delubar) \oo (\ftri \dot \hobar)
     = (\ftri \dot \hobar') \oo (x \dot \sigobar)
  \text{.}
\]
If we define $\sigubar = \fubar\dot\delubar$,
then that equation tells us that $\sigma = (\sigobar,\sigubar)$
is a 2-cell from $fh$ to $fh'$ in $\KK_{\CD}$.
Now the cartesian property tells us that there is a unique
$\delta\maps h \to h'$
over $\delubar$ such that $f\dot\delta = \sigma$,
and this gives us the unique $\delobar\maps\hobar\To\hobar'$
that we require for $F_c$ to be full and faithful.

Conversely, suppose that $\fobar$ and $y$ exhibit $\yobar$ as the bipullback of $\fubar$ and $x$ as illustrated in diagram \ref{diag:2-cod-cart}.
We show that $f\maps y \to x$ is a cartesian $1$-cell in $\KK_{\CD}$,
in other words that, for every $w$,
the functor $G_w = \left\langle P_{w,y}, f_{\ast}\right\rangle$
in Definition~\ref{defn:weakly-cart} is an equivalence.

To prove essential surjectivity,
assume that a $1$-cell $g \maps w \to x$ in $\KK_{\CD}$ is given together with a $1$-cell $\hubar\maps \wubar \to \yubar$ and an iso-2-cell $\alubar \maps \fubar \hubar \To \gubar$ in $\KK$.
\[
\begin{tikzpicture}[baseline=(c2.base),scale=1,every node/.style={transform shape}]
\node(c0) at (0,2) {$\wobar$};
\node(c1y) at (2,2) {$\yobar$};
\node(c1x) at (4,2) {$\xobar$};
\node(c2) at (0,0) {$\wubar$};
\node(c3y) at (2,0) {$\yubar$};
\node(c3x) at (4,0) {$\xubar$};
\node(d0) at (3,1) {$\ftdar$};
\node(d1) at (1,1) {$\htdar$};
\node(d5) at (2,-0.6) {$\alubar\Downarrow$};
\draw[->] (c2) to node[below]{$\hubar$} (c3y);
\draw[->] (c3y) to node[below]{$\fubar$} (c3x);
\draw[->] (c0) to node(g1){} node[left]{$w$} (c2);
\draw[->] (c1y) to node(g1){} node[above]{$\fobar$} (c1x);
\draw[->] (c1y) to node(g1){} node[left]{$y$} (c3y);
\draw[->] (c1x) to node(g1){} node[right]{$x$} (c3x);
\draw[->] (c0) to[bend left= 60] node(g3){} node[above]{$\gobar$} (c1x);
\draw[->] (c2) to[bend right= 60] node(g3){} node[below]{$\gubar$} (c3x);
\draw[->, dashed] (c0) to node[midway, fill=white]{$\hobar$} (c1y);
\end{tikzpicture}
\]
The iso-2-cell
$\gamma\eqdef (\alubar^{-1} \dot w ) \oo \gtri \maps x\gobar \To \gubar w \To \fubar\hubar w$
factors through the bipullback 2-cell with apex $\yobar$, and therefore it yields a $1$-cell $\hobar \maps \wobar \to \yobar$
and
iso-2-cells $\htri \maps y \oo \hobar \To \hubar \oo w$
(making a 1-cell $h\maps w \to y$ in $\KK_{\CD}$)
and $\alobar\maps  \fobar \oo \hobar \To \gobar$ such that $\ftri$ and $\htri$ paste to give
$\gamma \oo (x \dot \alobar)$.
From this we observe that $h\eqdef\tricell{h}$ is a lift of $\hubar$ and
$\alpha\eqdef\pair{\alobar}{\alubar}$
is an iso-2-cell from $fh$ to $g$ over $\alubar$
as required for cartesianness.

To show that $G_w$ is full and faithful,
suppose we have 1-cells $h,h' \maps w \to y$.
If $\delubar\maps\hubar\To\hubar'$
and $\sigma\maps fh \To fh'$
with $\fubar\dot\delubar = \sigubar$,
we must show that there is a unique
$\delta\maps h \To h'$ over $\delubar$
with $f\dot\delta = \sigma$.

We have 2-cells $\sigobar\maps \fobar \hobar \To \fobar \hobar'$
\[
  \mu = (\htri^{\prime -1})(\delubar\dot w)(\htri)
    \maps y\hobar \To \hubar w \To \hubar' w
      \To y\hobar
   \text{,}
\]
and moreover
\[
  (\ftri \dot\hobar')(x\dot\sigobar)
  = (\fubar \dot \htri^{\prime -1})(f h')^{\blacktriangledown}(x\dot\sigobar)
  = (\fubar \dot \htri^{\prime -1})(\sigubar\dot x)(f h)^{\blacktriangledown}
  = (\fubar\dot\mu)(\ftri\dot\hobar)
  \text{.}
\]
It then follows from the bipullback property that
we have a unique $\delobar\maps\hobar\To\hobar'$
such that $y\dot\delobar = \delubar\dot w$
(so we have a 2-cell $\delta\maps h \To h'$
over $\delubar$)
and $\fobar\dot\delobar = \sigobar$,
so $f\dot\delta = \sigma$ as required.

\end{proof}

%%%%%%%%%%%%%%%%%%%%%%%%%%%%%%%%%%%%%%%%%%%%
\section{Johnstone's criterion} \label{sec:Johnstone-crit}
%%%%%%%%%%%%%%%%%%%%%%%%%%%%%%%%%%%%%%%%%%%%
Another definition of (op)fibration first appeared in~\cite{johnstone:fib-par-prod};
see also~\cite[B4.4.1]{johnstone:elephant1} for more discussion.
This definition does not require strictness of the $2$-category nor the existence of comma objects.
Indeed, this definition is most suitable for weak $2$-categories such as
$2$-category of toposes where we do not expect diagrams of $1$-cells to commute strictly.
Moreover, although Johnstone assumed the existence of bipullbacks,
in fact one only needs bipullbacks of the class of $1$-cells one would like to define as (op)fibrations.
This enables us to generalize some of Johnstone's results from $\BTopos$
(where all bipullbacks exist)
to $\ETopos$ (where bounded 1-cells are bicarrable).

We have adjusted axiom (i) (lift of identity) in Johnstone's definition so that the (op)fibrations we get have the right weak properties.
That is to say, unlike Johnstone's definition, we only require lift of identity to be isomorphic (rather than equal) to identity.

The Johnstone definition is rather complicated, as it has to deal with coherence issues.
We have found a somewhat simpler formulation,
so we shall first look at that.
It is simpler notationally, in that it uses single symbols to describe two levels
of structure, ``downstairs'' and ``upstairs''.
More significantly, it is also simpler structurally in that it doesn't assume
canonical bipullbacks and then describe the coherences between them.
Instead it borrows techniques from the 2- and bi-categorical theories of fibrations,
which use the existence of cartesian liftings as bipullbacks.
This enables us to show
(Proposition~\ref{pro:Johnstone-fib-as-cod-fibrant-objects})
that the Johnstone criterion is equivalent to the
fibrational property of Definition~\ref{def:Johnstone-fib-via-Buckley}.

\begin{defn}
\label{defn:Johnstone-2-fib}
Suppose $\KK$ is a 2-category.
A $1$-cell $x\maps \xobar \to \xubar$ in $\KK$ \strong{satisfies the Johnstone criterion} if the following two conditions hold.
First, $x$ is bicarrable.
Second, suppose $\fubar,\gubar \maps \yubar \toto \xubar$
are two 1-cells into $\xubar$,
with $x_f\maps \xobar_f \to \xubar_f = \yubar$
and $x_g\maps \xobar_g \to \xubar_g = \yubar$
two provided bipullbacks of $x$ along $\fubar$ and $\gubar$.

Then for any $2$-cell $\alubar \maps \fubar \To \gubar$,
we have a $1$-cell $\raltop \maps \xobar_g \to \xobar_f$,
an iso-$2$-cell $\ralmid \maps x_f  \oo  \raltop \To x_g$,
and a $2$-cell $\alobar \maps \fobar \oo \raltop \To \gobar$
shown in the diagram on the left below.
(The canonical iso-$2$-cells $\ftri \maps x \oo \fobar \To \fubar \oo x_f$
and $\gtri \maps x \oo \gobar \To \gubar \oo x_g$
for the bipullbacks front and back are not shown.)
For convenience we show on the right the same diagram but with the notation
of~\cite[Definition B4.4.1]{johnstone:elephant1}.
\begin{equation}
%*****************diagram
\begin{tikzcd}[column sep=3.5em, row sep=1.5em]
& {\xobar_g}
\arrow[dl, swap, "\raltop"]
\arrow[rr,"\gobar"] \arrow[rd, phantom,bend left=35, ""{name=qux}] \arrow[dd, near start, swap, "x_g" description]
&& {\xobar}
\arrow[dl,equal]
\arrow[dd,"x"] \\
\xobar_f
\arrow[rr, crossing over, pos=0.65, swap, "\fobar"]
\arrow[dd, swap, "x_f"]
\arrow [rr, phantom, bend left=10, ""{name=baz}] \arrow[rd, phantom, bend right=15,""{name=fred}]
\arrow[rd, phantom, bend left=15, ""{name=waldo}]
&& \xobar
\arrow[dd, pos=0.35, "x"]
\arrow[Rightarrow, from=baz, to=qux,shorten <= +23pt, shorten >= +1pt, shift right=10pt, pos=0.8, "\alobar"]
\arrow[Rightarrow, phantom, from=fred, to=waldo, swap, "\raldnar"] \\
& {\yubar}
\arrow[dl, equal] \arrow[rr, pos=0.35, "\gubar"]
\arrow[rd, phantom, bend left=35, ""{name=bar}] &&
{\xubar}
\arrow[dl, equal]  \\
{\yubar}
\arrow[rr, swap, "\fubar"]
\arrow[rr, phantom,""{name=foo}, bend left=10]
&& {\xubar}
\arrow[from=uu, crossing over]
\arrow[Rightarrow, from=foo, to=bar, shorten <= +20pt, shorten >= +5pt, shift right=10pt, pos=0.6, "\alubar"] \\
\end{tikzcd}
%%%%%%%%%%% end of diagram
\quad
%%%%%%%%%%% Elephant diagram
\begin{tikzcd}[column sep=3.5em, row sep=1.5em]
& {g^\ast E}
  \arrow[dl, swap, "l(\alpha)"]
  \arrow[rr,"p^\ast g"]
  \arrow[rd, phantom,bend left=35, ""{name=qux}]
  \arrow[dd, near start, swap, "g^\ast p" description]
&& {E}
  \arrow[dl,equal]
  \arrow[dd,"p"]
\\
{f^\ast E}
  \arrow[rr, crossing over, pos=0.65, swap, "p^\ast f"]
  \arrow[dd, swap, "f^\ast p"]
  \arrow [rr, phantom, bend left=10, ""{name=baz}]
  \arrow[rd, phantom, bend right=15,""{name=fred}]
  \arrow[rd, phantom, bend left=15, ""{name=waldo}]
&& {E}
  \arrow[dd, pos=0.35, "p"]
  \arrow[Rightarrow, from=baz, to=qux,shorten <= +23pt, shorten >= +1pt,
    shift right=10pt, pos=0.8, "\tilde{\alpha}"]
  \arrow[Rightarrow, phantom, from=fred, to=waldo, swap, "\iso"]
\\
& {A}
  \arrow[dl, equal]
  \arrow[rr, pos=0.35, "g"]
  \arrow[rd, phantom, bend left=35, ""{name=bar}]
&& {B}
  \arrow[dl, equal]
\\
{A}
  \arrow[rr, swap, "f"]
  \arrow[rr, phantom,""{name=foo}, bend left=10]
&& {B}
  \arrow[from=uu, crossing over]
  \arrow[Rightarrow, from=foo, to=bar, shorten <= +20pt,
    shorten >= +5pt, shift right=10pt, pos=0.6, "\alpha"]
\\
\end{tikzcd}
\end{equation}
We simplify this by taking $\CD$ to be the class of all bicarrable 1-cells
in $\KK$ and working in $\KK_{\CD}$.
(We could equally well work with $\CD$ any class of display 1-cells in $\KK$,
as in Construction~\ref{cons:display-sub-2-cat}.)
Thus we have cartesian 1-cells $f\maps x_f \to x$
and $g\maps x_g \to x$,
and a \emph{vertical} 1-cell $r_\alpha \maps x_g \to x_f$
($\xubar_g = \yubar = \xubar_f$, and $\rubar_\alpha$ is the identity).

The data is subject to the following axioms:
\begin{enumerate}[label=(\subscript{J}{{\arabic*}})]
%***************** first axiom
\item\label{first-axiom-fib}
  $\alpha = (\alobar, \alubar)$ make a 2-cell in $\KK_\CD$,
  so we get the following diagram.
  \begin{equation}
    \begin{tikzcd}[column sep=3.5em, row sep=1.5em]
      & {x_g}
      \arrow[dl, swap, "r_\alpha"]
      \arrow[rr,"g"]
      \arrow[rd, phantom,bend left=35, ""{name=qux}]
      && {x}
      \arrow[dl,equal]
      \\
      {x_f}
      \arrow[rr, pos=0.65, swap, "f"]
      \arrow [rr, phantom, bend left=10, ""{name=baz}]
      && {x}
      \arrow[Rightarrow, from=baz, to=qux,
        shorten <= +23pt, shorten >= +1pt,
        shift right=10pt, pos=0.8, "\alpha"]
    \end{tikzcd}
  \end{equation}
\item\label{second-axiom-fib}
  Suppose we have two composable $2$-cells $\alubar\maps \fubar \To \gubar$ and
  $\betubar\maps \gubar \To \hubar$ in $\KK$ where
  $\fubar, \gubar, \hubar \maps \yubar \to \xubar$;
  we write $\gamubar\eqdef\betubar\alubar$.
  Let $\alpha,\beta,\gamma,r_\alpha,r_\beta,r_\gamma$
  be as above.
  Then there exists a vertical iso-$2$-cell
  $\tau_{\alpha, \beta} \maps r_{\alpha} \oo r_{\beta} \To r_{\gamma}$ in $\KK_{\CD}$ such that
  $\beta \oo (\alpha \dot r_{\beta}) \oo (f \dot \tau^{-1}_{\alpha, \beta})=\gamma$.
  \[
  \begin{tikzcd}[row sep=3em]
    x_h
      \arrow{d}[swap]{r_{\beta}}
      \arrow[dd, bend right= 60, swap, "r_{\gamma}", ""{name=DD}]
      \arrow[dd, phantom, bend left= 60, ""{name=DD0}]
      \arrow[ddr, bend left= 30, "h", ""{name=DDR}]
    \\
    x_g
      \arrow{d}[swap]{r_{\alpha}}
      \arrow[""{name=bar, below},dr, "g" description]
      \arrow[dr, phantom, ""{name=foo}, bend right=55]
    \\
    x_f
      \arrow{r}[swap]{f}
      \arrow[Rightarrow, from= foo, to=bar, swap, near start,
        "\alpha", shorten >=3]
    & x
    \arrow[Rightarrow, from=bar, to=DDR, swap, pos=0.7, "\beta",
      shorten <=15, shorten >=4, shift left =5]
    \arrow[Rightarrow, from= DD0, to=DD, swap, "\tau_{\alpha, \beta}",
      pos= 0.8, shorten <=45, shorten >=6]
  \end{tikzcd}
  \]
\item\label{third-axiom-fib}
  For any $1$-cell $\fubar \maps \yubar \to \xubar$ the lift of
  the identity $2$-cell on $\fubar$ is canonically isomorphic
  to the identity $2$-cell on the lift $f$,
  that is there exists a vertical iso-$2$-cell
  $\tau_{f} \maps 1_f \To r_{id_{\fubar}}$ in $\KK_{\CD}$ such that
  $f \dot \tau^{-1}_{f}$ is the lift of identity $2$-cell $id_{\fubar}$.
  \[
  \begin{tikzcd}[row sep=3em]
    x_f
      \arrow[d, swap, "1" description]
      \arrow[""{name=bar, below},dr, "f"]
      \arrow[""{name=downbent, below},d, bend right = 60, swap, "r_{\id}"]
      \arrow[""{name=downbent0, below}, bend left = 60, phantom, d]
      \arrow[dr, phantom, ""{name=foo}, bend right=55]
    \\
    x_f
      \arrow{r}[swap]{f}
      \arrow[Rightarrow, equal, from= foo, to=bar, swap, near start,
        shorten >=3]
    & x
      \arrow[Rightarrow, "\tau_f", from= downbent0, to=downbent, swap,
        near start, shorten <=28, shorten >=1]
  \end{tikzcd}
  \]
\item\label{fourth-axiom-fib}
  The lift of the whiskering of any $2$-cell
  $\alubar \maps \fubar \to \gubar \maps \yubar \to \xubar$
  with any
  $1$-cell $k \maps \zubar \to \yubar$ is isomorphic,
  via vertical iso-$2$-cells,
  to the whiskering of the lifts.

  In the following diagram,
  the right hand square is as usual,
  $f'$ and $g'$ are cartesian lifts of $\fubar\kubar$
  and $\gubar\kubar$,
  and the 1-cells $k_f$ and $k_g$ over $\kubar$
  and the vertical iso-2-cells $\rho$ and $\pi$
  are got from cartesianness of $f$ and $g$.
  Then the condition is that there should be a vertical
  iso-2-cell in the left hand square,
  which pastes with the others to give
  $\alpha'\maps f' r_{\alpha'} \To g'$
  of $\alubar \dot \kubar$.
  \begin{equation*}
  \begin{tikzcd}[column sep =3.1em,row sep =3.1em]
    x_{g'}
      \arrow[d, swap, ""{name=WNS}, "r_{\alpha'}"]
      \arrow[r, "k_g"]
      \arrow[rr, bend left=45, "g'" ,""{name=bend up2}]
      \arrow[rr, bend left=5, phantom, ""{name=bend up1}]
    & x_g
      \arrow[d, "r_{\alpha}", ""{name=MNS}]
      \arrow[r, "g", ""{name=EN}]
    & x
      \arrow[d,equal]
    \\
    x_{f'}
      \arrow[r, swap, "k_f"]
      \arrow[rr, bend right=45, "f'",swap, ""{name=bend down1}]
      \arrow[rr, bend right=5, phantom, ""{name=bend down2}]
    & x_f
      \arrow[r,swap, "f", ""{name=ES}]
    & x
    \arrow[phantom, Rightarrow, from= WNS, to=MNS, "\iso"]
    \arrow[phantom, Rightarrow, from= bend up1, to=bend up2,
      "\iso\ \pi"]
    \arrow[phantom,Rightarrow,from=bend down1,to=bend down2,
      "\iso\ \rho"]
    \arrow[Leftarrow, from= EN, to=ES, shorten <=20, shorten >=20,
      "\alpha"]
  \end{tikzcd}
  \end{equation*}
\item\label{fifth-axiom-fib}
  Given any pair of vertical $1$-cells $v_0 \maps y \to x_f$
  and $v_1 \maps y \to x_g$,
  any $2$-cell $\alpha_0 \maps f \oo v_0 \To g \oo v_1$
  over $\alubar$ factors through $\alpha$ uniquely,
  that is there exists a unique vertical $2$-cell
  $\mu \maps v_0 \To r_{\alpha} v_1$
  such that the following pasting diagrams are equal.
  \[
  \begin{tikzcd}
      [column sep =4.1em,row sep =4.1em, ampersand replacement=\&]
    y
      \arrow[d,swap,"v_1"]
      \arrow[r,phantom, bend left =20, ""{name=NWE}]
      \arrow[r,"v_0"]
    \& x_f \arrow[d, "f"]
    \\
    x_g
      \arrow[r,"g",swap]
      \arrow[r,phantom, bend right =20, ""{name=SWE}]
    \& x
    \arrow[Rightarrow, from=NWE, to=SWE, pos=0.5 ,shorten <=30,
      shorten >=30,"\alpha_0"]
    \end{tikzcd}
    \quad=\quad
    \begin{tikzcd}
        [column sep =4.1em,row sep =4.1em, ampersand replacement=\&]
      y
        \arrow{d}[swap]{v_1}
        \arrow[r,"v_0"]
        \arrow[r,bend left=70, phantom, ""{name=NWE1}]
        \arrow[r,bend right=50, phantom, ""{name=NWE2}]
      \& x_f
        \arrow[d, "f"]
      \\
      x_g
        \arrow[r,swap,"g"]
        \arrow[r,bend right=50, phantom, ""{name=SWE2}]
        \arrow[r,bend left=70, phantom, ""{name=SWE1}]
      \arrow[""{name=diag}, pos=0.5, ur,"r_{\alpha}" description] \& x
      \arrow[Rightarrow, swap, from=NWE1, to=NWE2, shorten <=28,
        shorten >=0.01, shift right=2, pos=0.8, "\mu"]
      \arrow[Rightarrow, from=SWE1, to=SWE2, shorten <=4,
        shorten >=24,pos=0.2, "\alpha"]
  \end{tikzcd}
  \]
\end{enumerate}
\end{defn}

\begin{rem}
Dually, \strong{opfibrations} are defined by changing the direction of $r_\alpha$:
for each $\alubar\maps\fubar\to\gubar$
we require a $1$-cell $l_\alpha \maps x_f \to x_g$
and $\alpha\maps f \To gl_\alpha$ with the axioms modified accordingly.
The letters $r$ and $l$ used here correspond to
Street's $2$-monads $R$ and $L$
in~\cite{street:fib-yoneda-2cat}.
\end{rem}

\begin{prop}
A fibration $p\maps E \to B$ is also an opfibration precisely when every $2$-cell $\alubar$ induces an adjunction $\elaltop \adj \raltop$.
\end{prop}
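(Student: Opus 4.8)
The plan is to recognise this as the $2$-categorical avatar of the classical fact that a fibration of categories is a bifibration exactly when every reindexing functor has a left adjoint, and to prove it directly from Definition~\ref{defn:Johnstone-2-fib} and its dual, using the calculus of mates. I would work throughout in $\KK_\CD$ with $\CD$ the class of all bicarrable $1$-cells, so that $p$ (playing the role of $x$ in Definition~\ref{defn:Johnstone-2-fib}) has, for each pair $\fubar,\gubar\maps\yubar\toto\xubar$ with a $2$-cell $\alubar\maps\fubar\To\gubar$, chosen cartesian lifts $f\maps x_f\to x$ of $\fubar$ and $g\maps x_g\to x$ of $\gubar$, a vertical $1$-cell $r_\alpha\maps x_g\to x_f$, and the $2$-cell $\alpha\maps f\oo r_\alpha\To g$, which by~\ref{fifth-axiom-fib} has the property that every $2$-cell $\alpha_0\maps f\oo v_0\To g\oo v_1$ over $\alubar$ (with $v_0,v_1$ vertical) factors uniquely as $\alpha_0=(\alpha\dot v_1)\oo(f\dot\mu)$ through a vertical $\mu\maps v_0\To r_\alpha\oo v_1$. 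Dually, an opfibration structure supplies a vertical $l_\alpha\maps x_f\to x_g$ and a $2$-cell $\bar\alpha\maps f\To g\oo l_\alpha$ with the opcartesian property that every such $\alpha_0$ factors uniquely as $\alpha_0=(g\dot\nu)\oo(\bar\alpha\dot v_0)$ through a vertical $\nu\maps l_\alpha\oo v_0\To v_1$.

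For the ``if'' direction I would fix, for each $\alubar$, an adjunction $l_\alpha\adj r_\alpha$ in the fibre $2$-category over $\yubar$ (the form in which an opfibration structure produces it), with vertical unit $\eta_\alpha\maps 1_{x_f}\To r_\alpha\oo l_\alpha$ and counit $\varepsilon_\alpha\maps l_\alpha\oo r_\alpha\To 1_{x_g}$, and set $\bar\alpha\eqdef(\alpha\dot l_\alpha)\oo(f\dot\eta_\alpha)$, the mate of $\alpha$. The key step is the opcartesian universal property: given $\alpha_0$ as above, take the unique vertical $\mu$ from~\ref{fifth-axiom-fib}, transpose it to $\nu\eqdef(\varepsilon_\alpha\dot v_1)\oo(l_\alpha\dot\mu)$, and verify $\alpha_0=(g\dot\nu)\oo(\bar\alpha\dot v_0)$ by interchange together with the triangle identity $(r_\alpha\dot\varepsilon_\alpha)\oo(\eta_\alpha\dot r_\alpha)=1_{r_\alpha}$; uniqueness of $\nu$ then follows from that of $\mu$ via the adjunction bijection. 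The opfibration analogues of~\ref{second-axiom-fib}, \ref{third-axiom-fib} and~\ref{fourth-axiom-fib} I would obtain by carrying the fibration comparison isomorphisms across the chosen adjunctions as mates: for instance the mate of $\tau_{\alpha,\beta}\maps r_\alpha\oo r_\beta\To r_\gamma$ with respect to $l_\beta\oo l_\alpha\adj r_\alpha\oo r_\beta$ and $l_\gamma\adj r_\gamma$ is an isomorphism $l_\gamma\iso l_\beta\oo l_\alpha$, whose coherence equation is forced by that of $\tau_{\alpha,\beta}$ and the triangle identities; likewise~\ref{third-axiom-fib} gives $r_{\id_{\fubar}}\iso 1_{x_f}$, hence $l_{\id_{\fubar}}\iso 1_{x_f}$. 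No compatibility among the separately chosen adjunctions need be assumed --- the coherence of the resulting opfibration structure is deduced, not imposed.

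For the ``only if'' direction I would start from opfibration data $l_\alpha$ and $\bar\alpha\maps f\To g\oo l_\alpha$. Regarding $\bar\alpha$ as an instance of $\alpha_0\maps f\oo v_0\To g\oo v_1$ over $\alubar$ with $v_0=1_{x_f}$ and $v_1=l_\alpha$ and applying~\ref{fifth-axiom-fib} yields a unique vertical $\eta_\alpha\maps 1_{x_f}\To r_\alpha\oo l_\alpha$ with $\bar\alpha=(\alpha\dot l_\alpha)\oo(f\dot\eta_\alpha)$; symmetrically, regarding $\alpha$ as an instance with $v_0=r_\alpha$ and $v_1=1_{x_g}$ and applying the opcartesian property yields a unique vertical $\varepsilon_\alpha\maps l_\alpha\oo r_\alpha\To 1_{x_g}$ with $\alpha=(g\dot\varepsilon_\alpha)\oo(\bar\alpha\dot r_\alpha)$. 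The two triangle identities then follow from these uniqueness clauses: each triangle composite is a vertical endo-$2$-cell of $l_\alpha$ (respectively $r_\alpha$) which, when fed back through the opcartesian (respectively fibration) universal property of $\bar\alpha$ (respectively $\alpha$), factors it exactly as the identity does, and hence equals the identity. Therefore $l_\alpha\adj r_\alpha$.

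The argument carries no genuine conceptual difficulty --- it is the mate calculus specialised to cartesian and opcartesian lifts --- so the real work, and the part I expect to be fiddly, lies in the coherence bookkeeping: matching each computation above against the pasting diagrams actually displayed in Definition~\ref{defn:Johnstone-2-fib}, with the suppressed bipullback isomorphisms $\ftri$ and $\gtri$ (and the comparison $2$-cells $\tau_{\alpha,\beta}$, $\tau_f$, $\rho$ and $\pi$ of~\ref{second-axiom-fib}--\ref{fourth-axiom-fib}) reinstated in the right places. I anticipate the whiskering axiom~\ref{fourth-axiom-fib}, which mixes the adjunction $l_\alpha\adj r_\alpha$ with the adjunction $l_{\alpha'}\adj r_{\alpha'}$ attached to the whiskered $2$-cell and is mediated by cartesianness of $f$ and $g$, to be the most delicate to pin down exactly.
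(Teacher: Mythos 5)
Your proposal is correct and follows essentially the same route as the paper: the paper's entire proof is the single observation that the unit and counit are obtained by taking $(v_0,v_1)=(1_{x_f},l_\alpha)$ and $(r_\alpha,1_{x_g})$ in axiom~\ref{fifth-axiom-fib} and its opfibration dual, which is precisely your third paragraph. The converse direction (building the opfibration structure from the adjunctions via mates, and checking opcartesianness of $\bar{\alpha}$ through the triangle identity) is left entirely implicit in the paper, and your elaboration of it is sound.
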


\begin{proof}
The unit and counit of adjunction are obtained by choosing $(1_{\strund{f}E}, \elaltop)$ and $(\raltop, 1_{{\strund{g}E}})$ for $(\xobar,\yobar)$ in axiom \ref{fifth-axiom-fib} above.
\end{proof}

% \begin{rem}
% The last axiom states that the fibration $x$ has enough precartesian lifts.%
% Conditions~\ref{third-axiom-fib} and~\ref{fourth-axiom-fib} then state that precartesian lifts are (up to isomorphism) closed under composition. Recall that a (cloven) prefibration is a (cloven) fibration if and only if precartesian morphisms are closed under composition. Hence this definition is reminiscent of characterization of Grothendieck fibration in terms of existence of precartesian lifts.
% \end{rem}

\begin{ex}
Let's take $\Cat$ to be the $2$-category of (small) categories, functors and natural transformations. Here we show that an internal fibration in $\Cat$ is indeed something that is referred to as a \textit{weak fibration} in the literature, e.g. in~\cite{street:consp}.

A functor of categories $p\maps E \to B$ is a Grothendieck fibration if and only if for every object $e$ of $E$,
the slice functor $p/e\maps E/e \to B/p(e)$ has a right adjoint right inverse.
It is a weak fibration whenever it has a right adjoint. Weak fibrations are also known by the names Street fibrations and sometimes \textit{abstract} fibrations. One can associate to every weak fibration an equivalent Grothendieck fibration. That is, every Street fibration can be factored as an equivalence followed by a Grothendieck fibration.

Let $p\maps E \to B$ be a Johnstone fibration in $\Cat$. Let $1$ be the terminal category, $e \in E$ and $\alubar\maps b \to pe$ a morphism in $B$.
\[
\begin{tikzcd}[row sep=3em]
1 \arrow{r}{e} \arrow[""{name=foo}, dr,swap, bend right=15, "b"] \arrow[dr, phantom, ""{name=bar}, bend left=55]  \arrow[Rightarrow, swap, from= foo, to= bar, shorten <=10, pos=0.6, "\alubar"]& E \arrow{d}{p} \\
& B
\end{tikzcd}
\]

$b^*E$ has as objects all pairs $\lang x \in E, \sigma \maps p x \iso b \rang$, and as morphisms all morphisms $h \maps x \to x'$ in $E$ making the triangle
\[
\begin{tikzcd}[column sep=small]
px \arrow[dr, swap, "\sigma"] \arrow{rr}{ph} & & px' \arrow[dl, "\sigma'"] \\
& b & \\
\end{tikzcd}
\]
commute.
Similarly, the bipullback category $(pe)^*E$ can be described. Notice that $\lang e, id_{pe} \rang$ is an object of $(pe)^*E$. Applying $\raltop$ we get an object $x$ in $E$ with an isomorphism $\sigma\maps px \iso b$. Axiom \ref{first-axiom-fib} implies $p(\alobar)= \alubar \oo \sigma$. $\alobar$ is the lift of $\alpha$ and axioms \ref{fourth-axiom-fib} and \ref{fifth-axiom-fib} prove that this lift is cartesian. Axioms \ref{second-axiom-fib} and \ref{third-axiom-fib} give coherence equations of lifts for identity and composition.
\end{ex}

Our goal now (Proposition~\ref{pro:Johnstone-fib-as-cod-fibrant-objects}) is to show that,
for the $2$-functor $\ccod\maps \KK_{\CD} \to \KK$,
a 1-cell $x\maps \xobar \to \xubar$ in $\KK$ satisfies the Johnstone criterion
iff it is fibrational for $\ccod$ in the sense of
Definition~\ref{def:Johnstone-fib-via-Buckley}.

\begin{lem}
\label{lemma:cartesianness-of-Johnstone-lift-2-cell}
  Suppose $x$ in $\KK_{\CD}$ satisfies the
  Johnstone criterion of Definition~\ref{defn:Johnstone-2-fib}.
  Let $\fubar$, $\gubar$ and $\alubar$ be
  as in the definition,
  giving rise to $f\maps x_f \to x$, $g\maps x_g\to x$
  and $\alpha\maps f r_\alpha \To g$,
  and let $u\maps z \to x_g$ be any 1-cell in $\KK_\CD$.
  Then the whiskering
  $\alpha\dot u\maps f r_\alpha u \To gu$ is cartesian.
\end{lem}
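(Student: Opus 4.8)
The plan is to verify directly that $\alpha\dot u$ has the universal property of a $\ccod$-cartesian $2$-cell (Definition~\ref{defn:weakly-cart}(ii)): for every $1$-cell $e\maps z\to x$ of $\KK_{\CD}$, every $2$-cell $\beta\maps e\To gu$, and every $2$-cell $\gamubar\maps\ccod(e)\To\ccod(f r_\alpha u)$ of $\KK$ with $\ccod(\alpha\dot u)\oo\gamubar=\ccod(\beta)$, there is a unique $\gamma\maps e\To f r_\alpha u$ with $\ccod(\gamma)=\gamubar$ and $(\alpha\dot u)\oo\gamma=\beta$. I would prove this in two stages: first reduce to the case where $u$ is a \emph{vertical} $1$-cell, and then settle that case with the help of axiom~\ref{fifth-axiom-fib}.

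\emph{Stage 1: reduction to a vertical whiskering.} Apply axiom~\ref{fourth-axiom-fib} with downstairs $1$-cell $\ccod(u)$. Writing $f'\maps x_{f'}\to x$ and $g'\maps x_{g'}\to x$ for the (cartesian) bipullback projections of $x$ along $\fubar\oo\ccod(u)$ and $\gubar\oo\ccod(u)$, and $r_{\alpha'}\maps x_{g'}\to x_{f'}$, $\alpha'\maps f'r_{\alpha'}\To g'$ for the data attached by the Johnstone criterion to the $2$-cell $\alubar\dot\ccod(u)$, the axiom supplies $1$-cells $k_f\maps x_{f'}\to x_f$ and $k_g\maps x_{g'}\to x_g$ over $\ccod(u)$ and vertical iso-$2$-cells $\rho\maps f k_f\To f'$, $\pi\maps g k_g\To g'$, $\iota\maps k_f r_{\alpha'}\To r_\alpha k_g$ for which $\alpha'=\pi\oo(\alpha\dot k_g)\oo(f\dot\iota)\oo(\rho^{-1}\dot r_{\alpha'})$. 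Since $g k_g$ is isomorphic to the cartesian $1$-cell $g'$ it is cartesian (Lemma~\ref{lem:closure-properties-of-cart-1,2-cells}\ref{lem:item:closed-under-iso-2-cell}), and $g$ is cartesian, so $k_g$ is cartesian (Lemma~\ref{lem:closure-properties-of-cart-1,2-cells}\ref{lem:item:left-closed}); as $u$ lies over $\ccod(k_g)=\ccod(u)$, cartesianness of $k_g$ and Lemma~\ref{lem:strict-lift-replacement} yield a vertical $1$-cell $\tilde u\maps z\to x_{g'}$ and a vertical iso-$2$-cell $\xi\maps k_g\tilde u\To u$. Whiskering the displayed equation by $\tilde u$ and inserting $\xi$ exhibits vertical iso-$2$-cells $\phi_0\maps f r_\alpha u\To f' r_{\alpha'}\tilde u$ and $\phi_1\maps gu\To g'\tilde u$ with $\alpha\dot u=\phi_1^{-1}\oo(\alpha'\dot\tilde u)\oo\phi_0$. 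Since iso-$2$-cells are cartesian and cartesian $2$-cells are closed under vertical composition (Lemma~\ref{lem:closure-properties-of-cart-1,2-cells}\ref{lem:item:iso-2-cell},\ref{lem:item:closed-under-composition}), $\alpha\dot u$ is cartesian iff $\alpha'\dot\tilde u$ is; so, replacing the data attached to $\alubar$ by that attached to $\alubar\dot\ccod(u)$ and $u$ by $\tilde u$, we may assume that $u$ is vertical. Rename it $v\maps z\to x_g$, so $\ccod(z)=\ccod(x_g)$.

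\emph{Stage 2: the vertical case.} Now $\ccod(f r_\alpha v)=\fubar$, $\ccod(gv)=\gubar$, $\ccod(\alpha\dot v)=\alubar$, and the hypothesis reads $\alubar\oo\gamubar=\ccod(\beta)$ with $\gamubar\maps\ccod(e)\To\fubar$. Assume first that $\gamubar$ is invertible. Then, since $f$ is cartesian (Proposition~\ref{pro:cart-1-cell-aka-bipullback}, Remark~\ref{rem:explicit-material-of-weak-cart}), $e$ is isomorphic to $f\hat e$ for a vertical $1$-cell $\hat e\maps z\to x_f$, via an iso-$2$-cell $\omega\maps f\hat e\To e$ over $\gamubar^{-1}$; then $\beta\oo\omega\maps f\hat e\To gv$ lies over $\alubar$, so axiom~\ref{fifth-axiom-fib} (with $v_0=\hat e$, $v_1=v$) gives a unique vertical $\mu\maps\hat e\To r_\alpha v$ with $\beta\oo\omega=(\alpha\dot v)\oo(f\dot\mu)$, and $\gamma\eqdef(f\dot\mu)\oo\omega^{-1}$ is the required lift (it lies over $\gamubar$ and satisfies $(\alpha\dot v)\oo\gamma=\beta$; uniqueness follows from that of $\mu$ together with the $2$-cell-lifting property of the cartesian $1$-cell $f$). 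For a general $\gamubar$ one reduces to this case, using the bipullback universal properties of $f$ and $g$ — and the compatibility of the Johnstone data with composition of $2$-cells (axiom~\ref{second-axiom-fib}) — to factor the test datum $(e,\beta,\gamubar)$ through one whose downstairs component is invertible, then apply the previous paragraph and transport the result back, uniqueness being preserved throughout.

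\emph{Main obstacle.} The delicate part is this last reduction. Axiom~\ref{fifth-axiom-fib} only expresses the universal property of $\alpha$ against test $1$-cells of the special shape ``$f$ precomposed with a vertical $1$-cell'' and against $2$-cells lying strictly over $\alubar$, whereas $\ccod$-cartesianness of $\alpha\dot v$ quantifies over an arbitrary test $1$-cell $e$ and a possibly non-invertible downstairs $2$-cell $\gamubar$. Getting from the former, restricted, universal property to the latter, full one — via the bipullback universal properties of $f$ and $g$, while keeping coherent track of the structural iso-$2$-cells $\ftri,\gtri$ of those bipullbacks and of the reindexing isos $\rho,\pi,\iota,\xi$ of Stage~1 — is where essentially all the work lies.
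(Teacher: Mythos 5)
Your Stage 1 (reduction to a vertical whiskering via axiom~\ref{fourth-axiom-fib} and the closure properties of Lemma~\ref{lem:closure-properties-of-cart-1,2-cells}) matches the paper's own reduction, just performed in the opposite order, and is sound. The gap is in Stage 2. Once $u$ is vertical, the test datum for cartesianness of $\alpha\dot u$ is a 2-cell $\gamma_0\maps e_0\To gu$ lying over $\alubar\oo\betubar$, where $\betubar\maps\eubar_0\To\fubar$ is an \emph{arbitrary} 2-cell of $\KK$. You prove only the sub-case in which the downstairs 2-cell is invertible, and for the general case you propose to ``factor the test datum through one whose downstairs component is invertible''. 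No such factorization exists in general --- an arbitrary 2-cell downstairs (e.g.\ an arbitrary geometric transformation) does not factor through isomorphisms --- and you yourself flag this step as the unresolved ``main obstacle''. Since the whole point of the Johnstone/$\ETopos$ setting is that downstairs 2-cells need not be invertible, this is the essential content of the lemma, and it is left unproved.

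The missing idea is not to force $\betubar$ to be invertible but to apply the Johnstone structure to $\betubar$ itself. Write $e_0\iso e w$ with $e$ a cartesian lift of $\eubar_0$ and $w$ vertical; let $\beta\maps e r_\beta\To f$ and $\gamma\maps e r_\gamma\To g$ be the Johnstone lifts of $\betubar$ and of $\gamubar=\alubar\oo\betubar$, and let $\tau_{\beta,\alpha}\maps r_\beta\oo r_\alpha\To r_\gamma$ be the coherence iso provided by axiom~\ref{second-axiom-fib}. Axiom~\ref{fifth-axiom-fib} applied to $\beta$ (with the vertical 1-cells $w$ and $r_\alpha u$) puts 2-cells $e w\To f r_\alpha u$ over $\betubar$ in bijection with vertical 2-cells $w\To r_\beta r_\alpha u$; applied to $\gamma$ it puts 2-cells $e w\To gu$ over $\gamubar$ in bijection with vertical 2-cells $w\To r_\gamma u$; and pasting with $\tau_{\beta,\alpha}$ identifies postcomposition by $\alpha\dot u$ on one side with composition by an isomorphism on the other. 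Existence and uniqueness of the required lift then follow from one further application of~\ref{fifth-axiom-fib}. This is exactly how the paper closes the argument, and it is the step your proposal leaves open.
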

\begin{proof}
First, we deal with the case where $u$ is vertical.
Note that this also shows that $\alpha$ itself
is cartesian.

Suppose $\gamma_0 \maps e_0 \To gu$ is a 2-cell in $\KK_{\CD}$ such that
$\ccod(\gamma_0) = \gamubar_0 = \alubar \oo \betubar$ in $\KK$.
We seek a unique 2-cell $\beta_0 \maps e_0 \To f r_{\alpha} u$
over $\betubar$ such that $(\alpha \dot u) \oo \beta_0 = \gamma_0$.

Let $e \maps  x_e \to x$ be a cartesian lift of $\eubar_0$,
obtained as a bipullback.
Then we can factor $e_0$, up to a vertical iso-2-cell,
as $ev$ where $v$ is a vertical $1$-cell.
We can neglect the iso-2-cell and assume $e_0=ev$.
Also, let $\beta \maps e \oo r_{\beta} \To f$
and $\gamma \maps e \oo r_{\gamma} \To g$
be lifts of $\betubar \maps \eubar = \eubar_0 \To \fubar$
and $\gamubar \eqdef \gamubar_0\maps \eubar \To \gubar$
obtained from the fibration structure of $x$.

From axiom~\ref{second-axiom-fib} we get an iso-2-cell
$\tau_{\beta,\alpha}\maps r_\beta \oo r_\alpha \to r_\gamma$.
\[
\begin{tikzpicture}
\node(c1) at (-2,0) {$x_e$};
\node(c2) at (0,-0.4) {$x_f$};
\node(c3) at (2,0) {$x_g$};
\node(c5) at (4,0) {$z$};
\node(c4) at (0,-2) {$x$};
\node(c7) at (-1.2,-0.8) {};
\node(c8) at (0,-0.8) {};
\node(c9) at (1.2,-0.8) {};
\node(c10) at (0,0) {\tiny $\iso$};
\draw[->] (c2) to node[midway, fill=white]{\tiny $r_\beta$} node(f1){} (c1);
\draw[->] (c3) to node[midway, fill=white]{\tiny $r_\alpha$} node(f2){} (c2);
\draw[->] (c1) to[bend right=30] node[midway, fill=white]{\tiny $e$} node(e){} (c4);
\draw[->] (c2) to node[midway, fill=white]{\tiny $f$} node(f){} (c4);
\draw[->] (c3) to[bend left=30] node[midway, fill=white]{\tiny $g$} node(g){} (c4);
\draw[->] (c3) to[bend right=15] node[midway, fill=white]{\tiny $r_\gamma$} node(f6){} (c1);
\draw[->] (c5) to node[above]{\tiny $u$} (c3);
\draw[->] (c5) to[bend right=50] node[above]{\tiny $v$} node(f5){} (c1);
\draw[double,double equal sign distance,-implies, shorten <=1, shorten >=12] (c7) -- node[below, pos=0.3]{\tiny $\beta$} (c8);
\draw[double,double equal sign distance,-implies, shorten <=10, shorten >=3] (c8) -- node[below, pos=0.6]{\tiny $\alpha$} (c9);
\end{tikzpicture}
\]

Using axiom~\ref{fifth-axiom-fib},
the unique $\beta_0\maps ev \To f r_\alpha u$ that we seek
amounts to a unique vertical
$\mu_0\maps v \To r_\beta \oo r_\alpha \oo u$
such that the diagram on the left below pastes with $\alpha \dot u$
to give $\gamma_0\maps ev \To gu$.
Bringing in $\tau{_\beta,\alpha}$,
this amounts to finding a unique vertical
$\mu_1\maps v \To r_\gamma \oo u$
such that the equation on the right holds,
and this is immediate from axiom~\ref{fifth-axiom-fib}.

\begin{equation*}
\begin{tikzcd}[column sep =4.1em,row sep =4.1em]
  z
    \arrow{d}[swap]{r_\alpha u}
    \arrow[r,"v"]
    \arrow[r,bend left=70, phantom, ""{name=NWE1}]
    \arrow[r,bend right=70, phantom, ""{name=NWE2}]
  & x_e
    \arrow[d, "e"]
  \\
  x_f
    \arrow[r,swap,"f"]
    \arrow[r,bend right=50, phantom, ""{name=SWE2}]
    \arrow[r,bend left=70, phantom, ""{name=SWE1}]
    \arrow[""{name=diag}, pos=0.5, ur,"r_{\beta}" description]
  & x
  \arrow[Rightarrow, from=NWE1, to=NWE2, shorten <=25,shorten >=0.11,
    shift right=2, pos=0.8, "\mu_0"]
  \arrow[Rightarrow, from=SWE1, to=SWE2, shorten <=1, shorten >=20,
    pos=0.2, "\beta"]
\end{tikzcd}
=
\begin{tikzcd}[column sep =4.1em,row sep =4.1em]
  z
    \arrow{d}[swap]{r_\alpha u}
    \arrow[r,"v", ""{name=NWE}]
  & x_e \arrow[d, "e"]
  \\
  x_f
    \arrow[r,"f",swap, ""{name=SWE}]
  & x
  \arrow[Rightarrow, from=NWE, to=SWE, pos=0.5 ,shorten <=20,
    shorten >=20,"\beta_0"]
\end{tikzcd}
\quad\quad
\begin{tikzcd}[column sep =4.1em,row sep =4.1em]
  z
    \arrow{d}[swap]{u}
    \arrow[r,"v"]
    \arrow[r,bend left=70, phantom, ""{name=NWE1}]
    \arrow[r,bend right=70, phantom, ""{name=NWE2}]
  & x_e
    \arrow[d, "e"]
  \\
  x_g
    \arrow[r,swap,"g"]
    \arrow[r,bend right=50, phantom, ""{name=SWE2}]
    \arrow[r,bend left=70, phantom, ""{name=SWE1}]
    \arrow[""{name=diag}, pos=0.5, ur,"r_{\gamma}" description]
  & x
  \arrow[Rightarrow, from=NWE1, to=NWE2, shorten <=25,shorten >=0.11,
    shift right=2, pos=0.8, "\mu_1"]
  \arrow[Rightarrow, from=SWE1, to=SWE2, shorten <=1, shorten >=20,
    pos=0.2, "\gamma"]
\end{tikzcd}
=
\begin{tikzcd}[column sep =4.1em,row sep =4.1em]
  z
    \arrow[d,swap,"u"]
    \arrow[r,"v",""{name=NWE}]
  & x_e \arrow[d, "e"]
  \\
  x_g \arrow[r,"g",swap, ""{name=SWE}]
  & x
  \arrow[Rightarrow, from=NWE, to=SWE, pos=0.5 ,shorten <=20,
    shorten >=20,"\gamma_0"]
\end{tikzcd}
\end{equation*}

Now we prove the result for general $u$.

We can factor $u$ up to an iso $2$-cell as $kv$,
where $v$ is vertical and $k$ is cartesian.
Because of Lemma~\ref{lem:closure-properties-of-cart-1,2-cells} \ref{lem:item:closed-under-composition},\ref{lem:item:iso-2-cell}
we might as well assume that $u = kv$.
Axiom \ref{fourth-axiom-fib} implies that, up to an iso-2-cell, $\alpha \dot k$ can be obtained as the lift of $\alubar \dot \kubar$.
We can thus apply the vertical case, already proved,
to see that $(\alpha \dot k)\dot v$ is cartesian.
\end{proof}

\begin{prop}
\label{pro:Johnstone-fib-as-cod-fibrant-objects}
A $1$-cell $x\maps \xobar \to \xubar$ in $\KK$ is a fibration in the sense of the Johnstone criterion Definition~\ref{defn:Johnstone-2-fib} iff
it is fibrational as a 0-cell in $\KK_{\CD}$
(Definition~\ref{def:Johnstone-fib-via-Buckley}).
\end{prop}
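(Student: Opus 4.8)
The plan is to match the three clauses \ref{def:JfvB:cart-lift-1-cell}--\ref{def:JfvB:whiskering-cart-2-cell} of fibrationality of $x$ as a $0$-cell of $\KK_{\CD}$ for $\ccod$ against the data and axioms \ref{first-axiom-fib}--\ref{fifth-axiom-fib} of the Johnstone criterion, using two translation facts throughout: $\ccod$-cartesian $1$-cells in $\KK_{\CD}$ are exactly bipullback squares in $\KK$ (Proposition~\ref{pro:cart-1-cell-aka-bipullback}), and $\ccod$-cartesian $2$-cells $\alpha\maps f\To g\maps e'\to x$ are exactly the cartesian morphisms of the hom-category $\KK_{\CD}(e',x)$ over the functor $P_{e',x}\maps\KK_{\CD}(e',x)\to\KK(\ccod(e'),\xubar)$ (the elementary description closing \textsection\ref{sec:cartesianness}). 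With the first fact, bicarrability of $x$ and clause~\ref{def:JfvB:cart-lift-1-cell} are literally the same statement, so all the content lies in the $2$-cells. I would fix chosen bipullbacks: for $\fubar\maps\yubar\to\xubar$ write $x_f\maps\xobar_f\to\yubar$ for the chosen bipullback of $x$ along $\fubar$ with cartesian $1$-cell $f\maps x_f\to x$, and note that any $g\maps e'\to x$ over $\gubar$ factors, by cartesianness of the chosen bipullback $\bar g$ over $\gubar$, as $g\cong\bar g\oo v$ with $v$ vertical. The key dictionary entry is that a \emph{vertical} $1$-cell $r\maps x_g\to x_f$ in $\KK_{\CD}$ \emph{is} precisely a pair $(\raltop,\ralmid)$ as in Definition~\ref{defn:Johnstone-2-fib} (its top component and its structural iso-$2$-cell), and a $2$-cell $\alpha\maps f r\To g$ in $\KK_{\CD}$ \emph{is} precisely a pair $(\alobar,\alubar)$; hence axiom~\ref{first-axiom-fib} carries no content beyond this repackaging. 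I would also record once and for all that, by clause~\ref{def:JfvB:cart-lift-2-cell} together with Lemma~\ref{lem:closure-properties-of-cart-1,2-cells}\ref{lem:item:vert-cart-2-cell}, any two $\ccod$-cartesian lifts of the same downstairs $2$-cell with the same codomain differ by a unique vertical iso-$2$-cell.

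For the direction ``fibrational $\Rightarrow$ Johnstone'': given $\fubar,\gubar$ and $\alubar\maps\fubar\To\gubar$, view the chosen bipullback $g\maps x_g\to x$ as an object of $\KK_{\CD}(x_g,x)$ over $\gubar$; clause~\ref{def:JfvB:cart-lift-2-cell} yields a cartesian lift $\alpha'\maps f'\To g$ of $\alubar$, and cartesianness of $f$ factors $f'\cong f\oo r_\alpha$ with $r_\alpha$ vertical, so that $\alpha\maps f r_\alpha\To g$ (the composite of $\alpha'$ with this iso) is $\ccod$-cartesian and supplies exactly the Johnstone data. Axiom~\ref{fifth-axiom-fib} is then just the cartesianness of $\alpha$: whisker by $v_1$ using clause~\ref{def:JfvB:whiskering-cart-2-cell} and use faithfulness of the cartesian $1$-cell $f$ to descend the factorising $2$-cell from the form $f\dot(-)$. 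Axioms~\ref{second-axiom-fib}, \ref{third-axiom-fib}, \ref{fourth-axiom-fib} all follow from uniqueness of cartesian lifts: in \ref{second-axiom-fib} both $\gamma$ and $\beta\oo(\alpha\dot r_\beta)$ are cartesian lifts of $\betubar\alubar$ with codomain $h$ (the latter because $\alpha\dot r_\beta$ is cartesian by clause~\ref{def:JfvB:whiskering-cart-2-cell} and cartesians compose, Lemma~\ref{lem:closure-properties-of-cart-1,2-cells}\ref{lem:item:closed-under-composition}), so they differ by a vertical iso which $f$'s cartesianness resolves into $\tau_{\alpha,\beta}$; \ref{third-axiom-fib} is the same with $1_f$ in place of $\beta\oo(\alpha\dot r_\beta)$; and \ref{fourth-axiom-fib} compares the cartesian lift of $\alubar\dot\kubar$ with the (also cartesian, by clause~\ref{def:JfvB:whiskering-cart-2-cell}) whiskering $\alpha\dot k_g$, both over $\alubar\kubar$ and both with codomain $g'$ after transporting along $\rho$ and $\pi$.

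For ``Johnstone $\Rightarrow$ fibrational'': clause~\ref{def:JfvB:cart-lift-1-cell} is bicarrability via Proposition~\ref{pro:cart-1-cell-aka-bipullback}. For clause~\ref{def:JfvB:cart-lift-2-cell}, given $g\maps e'\to x$ over $\gubar$ and $\alubar\maps\fubar\To\gubar$, factor $g\cong\bar g\oo v$ through the chosen bipullback over $\gubar$ with $v$ vertical, take the Johnstone $2$-cell $\alpha$ for $\alubar$ between the chosen bipullbacks, and whisker: $\alpha\dot v\maps x_f r_\alpha v\To\bar g v\cong g$ is, by Lemma~\ref{lemma:cartesianness-of-Johnstone-lift-2-cell}, a cartesian lift of $\alubar$ over $g$, so $P_{e',x}$ is a Grothendieck fibration. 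For clause~\ref{def:JfvB:whiskering-cart-2-cell}, first show that every $\ccod$-cartesian $2$-cell $\sigma\maps f_1\To g_1\maps e'\to x$ is, up to vertical iso-$2$-cells, of the form $\alpha\dot v$: factor $g_1\cong\bar g v$ as above and match $\sigma$ against the cartesian $\alpha\dot v$ by the uniqueness of cartesian lifts, giving $\sigma=(\alpha\dot v)\oo\iota$ with $\iota$ a vertical iso; then for any $u\maps e''\to e'$ write $\sigma\dot u=(\alpha\dot(vu))\oo(\iota\dot u)$, whose first factor is cartesian by Lemma~\ref{lemma:cartesianness-of-Johnstone-lift-2-cell} and whose second factor is cartesian since $\iota\dot u$ is an iso-$2$-cell (Lemma~\ref{lem:closure-properties-of-cart-1,2-cells}\ref{lem:item:iso-2-cell}), so $\sigma\dot u$ is cartesian by Lemma~\ref{lem:closure-properties-of-cart-1,2-cells}\ref{lem:item:closed-under-composition}.

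The bookkeeping of the structural iso-$2$-cells of the bipullbacks --- the $\ftri$'s that Definition~\ref{defn:Johnstone-2-fib} suppresses --- is what makes the routine verifications lengthy rather than difficult. The genuinely delicate point is axiom~\ref{second-axiom-fib}: one must check that the vertical iso extracted from comparing the two cartesian lifts satisfies the displayed equation $\beta\oo(\alpha\dot r_\beta)\oo(f\dot\tau_{\alpha,\beta}^{-1})=\gamma$ \emph{on the nose}, which is where the identity-strictness built into the framework is used --- cartesian $1$-cells realised as bipullbacks with chosen structure, and genuine lifts (rather than weak lifts) obtained via Lemma~\ref{lem:strict-lift-replacement}. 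I expect this coherence check to be the main obstacle; everything else reduces to the two translation facts above together with Lemmas~\ref{lemma:cartesianness-of-Johnstone-lift-2-cell} and~\ref{lem:closure-properties-of-cart-1,2-cells}.
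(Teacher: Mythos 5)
Your proposal is correct and follows essentially the same route as the paper's proof: both directions hinge on Proposition~\ref{pro:cart-1-cell-aka-bipullback}, Lemma~\ref{lemma:cartesianness-of-Johnstone-lift-2-cell}, and the closure properties of Lemma~\ref{lem:closure-properties-of-cart-1,2-cells}, with the Johnstone axioms recovered from uniqueness of cartesian lifts followed by descent through the cartesian $1$-cell $f$. The only cosmetic divergences are that the paper establishes clause~\ref{def:JfvB:whiskering-cart-2-cell} by invoking axiom~\ref{fifth-axiom-fib} to produce the comparison $2$-cell rather than citing uniqueness of cartesian lifts directly, and the on-the-nose equation in axiom~\ref{second-axiom-fib} that you flag as the main obstacle is already discharged by the uniqueness clause of Remark~\ref{rem:explicit-material-of-weak-cart}.
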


\begin{proof}
First note, by Proposition~\ref{pro:cart-1-cell-aka-bipullback},
that \ref{def:JfvB:cart-lift-1-cell} is equivalent to bicarrability of $x$.
Now suppose $x$ is a fibration in the sense of Definition~\ref{defn:Johnstone-2-fib}.

To show \ref{def:JfvB:cart-lift-2-cell},
assume that $g_0\maps y \to x$ and
$\alubar \maps \fubar \To \gubar_0 \maps \yubar \toto \xubar$
is a 2-cell in $\KK$.
We aim to find a cartesian lift of $\alubar$.

Let $f\maps x_f\to x$ and $g\maps x_g \to x$
be cartesian lifts of $\fubar$ and $\gubar_0$, so $\gubar = \gubar_0$,
and suppose the Johnstone criterion gives them structure
$\alpha\maps f r_\alpha \To g$.
Then we factor $g_0$ through $g$ and obtain a lift $v$ of $1_{\yubar}$
and an iso-$2$-cell $\mu \maps g v \To g_0$ in $\KK_{\CD}$.
Pasting $\mu$ and $\alpha$ together we get a $2$-cell
$\alpha_0 \eqdef \gamma \oo (\alpha \dot v)$,
lying over $\alubar$, from $f_0\eqdef fr_\alpha v$ to $g_0$ in $\KK_{\CD}$.
\[
  \begin{tikzcd}[row sep=3em]
    y
      \arrow{d}[swap]{v}
      \arrow[ddr, bend left= 30, "g_0", ""{name=DDR}] &
    \\
    x_g
      \arrow{d}[swap]{r_{\alpha}}
      \arrow[""{name=bar, below},dr, "g" description]
      \arrow[dr, phantom, ""{name=foo}, bend right=55]
    \\
    x_f
      \arrow{r}[swap]{f}
    & x
    \arrow[Rightarrow, from= foo, to=bar, swap, near start,
      "\alpha", shorten >=3]
    \arrow[Rightarrow, from=bar, to=DDR, swap, pos=0.7, "\mu",
      shorten <=15, shorten >=4, shift left =5]
  \end{tikzcd}
\]

Note that $\alpha_0$ is indeed cartesian. This is because $\mu$ is a an iso-2-cell,
and therefore it is cartesian by
Lemma~\ref{lem:closure-properties-of-cart-1,2-cells}\ref{lem:item:iso-2-cell},  $\alpha \dot v$ is cartesian according to Lemma~\ref{lemma:cartesianness-of-Johnstone-lift-2-cell},
and also vertical composition of cartesian 2-cells is cartesian.

For \ref{def:JfvB:whiskering-cart-2-cell},
let $\alpha_0 \maps f_0 \To g_0 \maps y \to x$ be any cartesian $2$-cell in $\KK_{\CD}$,
and let $k \maps z \to y $ any $1$-cell in $\KK_{\CD}$.
We will show that the whiskered $2$-cell $\alpha_0 \dot k$ is again cartesian.
First, let $f \maps x_f \to x$ and $g \maps x_g \to x$
be cartesian lifts of $\fubar_0$ and $\gubar_0$,
and let $\alpha\maps f r_\alpha \To g$ be got
from $\alubar_0$ in the usual way.
Then we factor $f_0$ and $g_0$ up to vertical
iso-2-cells as
$\rho \maps f_0 \iso f \oo u$
and $\pi \maps g_0 \iso g \oo v$,
where $u$, $v$ are vertical.
Define $\alpha'_0= \pi \oo \alpha_0 \oo \rho^{-1}$.
Obviously, $\alpha'_0$ is cartesian and $\alpha_0 \dot k$ is cartesian if and only if $\alpha'_0 \dot k$ is cartesian.
By axiom \ref{fifth-axiom-fib} of fibration, we get a (unique) vertical $2$-cell $\mu$ such that
$(\alpha \dot v) \oo  (f \dot \mu) = \alpha'_0$.
By Lemma~\ref{lemma:cartesianness-of-Johnstone-lift-2-cell}
$\alpha \dot v$ is cartesian and it follows that $f \dot \mu$ is cartesian since $\alpha'_0$ is cartesian.
Now the 2-cell $f \dot \mu$ is both vertical and cartesian and thus it is an iso-$2$-cell, according to Lemma~\ref{lem:closure-properties-of-cart-1,2-cells}\ref{lem:item:vert-cart-2-cell}. So, our task reduces to proving that $(\alpha \dot v) \dot k$ is a cartesian 2-cell,
and this we know from Lemma~\ref{lemma:cartesianness-of-Johnstone-lift-2-cell}.

Conversely, suppose $x \maps \xobar \to \xubar$ is a fibrational
$0$-cell in $\KK_{\CD}$.
We want to extract the structure of the Johnstone criterion for $x$ out of this data.
First of all according to \ref{def:JfvB:cart-lift-1-cell},
$x$ is bicarrable.
Suppose $\alubar \maps \fubar \To \gubar$ is any $2$-cell in $\KK$.
Let $g$ be a cartesian lift of $\gubar$ obtained as a bipullback of $\gubar$ along $x$ in $\KK$.
By \ref{def:JfvB:cart-lift-2-cell} $\alubar$ has a cartesian lift $\alpha' \maps f' \To g$.
Factor $f'$, up to an iso-$2$-cell $\gamma$, as $f \oo r_{\alpha}$ where $r_{\alpha}$ is vertical and $f\maps x_f\to x$.
From $\alpha'$ and $\gamma$ we obtain a cartesian $2$-cell $\alpha \maps f \oo r_{\alpha} \To g$ which satisfies axiom \ref{first-axiom-fib}.

\begin{equation}
\label{diag:from-Buck-to-PTJ-fib}
\begin{tikzpicture}
\node(c1) at (-1.5,0) {$x_g$};
\node(c2) at (0,-1.5) {$x_f$};
\node(c3) at (1.5,0) {$x$};
\node(c4) at (0,-1) {$\iso \gamma$};
\draw[->] (c1) to[bend right=15] node[below left]{$r_{\alpha}$} (c2);
\draw[->] (c2) to[bend right=15] node[below right]{$f$} (c3);
\draw[->] (c1) to[bend right=25] node[midway, fill=white]{$f'$} node(f2){} (c3);
\draw[->] (c1) to[bend left=35] node[above]{$g$} node(f3){} (c3);
\draw[double,double equal sign distance,-implies, shorten <=5, shorten >=4] (f2) -- node[right]{$\alpha'$} (f3);
\end{tikzpicture}
\end{equation}

To show \ref{second-axiom-fib}, take a pair of composable $2$-cells $\alubar \maps \fubar \To \gubar$ and $\betubar \maps \gubar \To \hubar$.
Carrying out the same procedure as we did in diagram \ref{diag:from-Buck-to-PTJ-fib},
we obtain cartesian 2-cells
$\alpha \maps f \oo r_{\alpha} \To g$
and $\beta \maps g \oo r_{\beta} \To h$,
and also $\gamma \maps f \oo r_{\gamma} \To h$
lifting $\gamubar = \betubar\alubar$.
By \ref{def:JfvB:whiskering-cart-2-cell},
the $2$-cell
$\beta \oo (\alpha \dot r_{\beta}) \maps
  f r_\alpha r_\beta \To h$
is cartesian.
Therefore, there exists a unique vertical iso-$2$-cell
$\sigma \maps f r_{\alpha} r_{\beta} \To f r_{\gamma}$
such that
$\gamma \oo \sigma = \beta \oo (\alpha \dot r_{\beta})$.
\[
\begin{tikzpicture}
[baseline=(c1.base),scale=0.7,
  every node/.style={transform shape}]
\node(c1) at (-1,2.5) {$x_h$};
\node(c2) at (1,-0.5) {$x_f$};
\node(c3) at (4,-0.5) {$x$};
\node(c4) at (1,-1.2) {} ;
\node(a1) at (2.5, -0.6) {};
\node(a2) at (3.5, 2.2) {};
\draw[->] (c1) to[bend right=30] node(f1){}
  node[midway, fill=white]{$r_{\alpha}r_{\beta}$} (c2);
\draw[->] (c1) to[bend left=40] node(f2){}
  node[midway, fill=white]{$r_{\gamma}$} (c2);
\draw[->, dotted] (c1) to[bend left=20] node(f4){}
  node[midway, fill=white]{$fr_{\alpha}r_{\beta}$} (c3);
\draw[->] (c1) to[bend left=80] node(f5){}
  node[midway, fill=white]{$fr_{\gamma}$} (c3);
\draw[double,double equal sign distance,-implies,
  shorten <=3, shorten >=3] (f4) to[bend right=1]
  node[right, pos=0.3]{$\tiny{\sigma}$} (f5);
%\draw[double,double equal sign distance,-implies, shorten <=3, shorten >=3] (f1) to[bend right=1]  node[above, pos=0.3]{$\tiny{\delta}$} (f2);
\draw[->] (c2) to node(g1){} node[below]{$f$} (c3);
\end{tikzpicture}
\]
Since $f$ is cartesian,
Remark~\ref{rem:explicit-material-of-weak-cart}
\ref{item:uniqueness-of-explicit-material-of-weak-cart} yields a unique vertical iso-2-cell
$\tau_{\alpha, \beta} \maps r_{\alpha} r_{\beta}
  \To  r_{\gamma}$
such that $f \dot \tau_{\alpha, \beta} = \sigma$.
Thus,
$(\beta \alpha) \oo (f \dot \tau_{\alpha, \beta})
  = \beta \oo (\alpha \dot r_{\beta})$.

For condition~\ref{third-axiom-fib}, if $\alubar = \id$,
then $\alpha$ is both cartesian and vertical, and hence an isomorphism.
Now we can use Remark~\ref{rem:explicit-material-of-weak-cart}\ref{item:uniqueness-of-explicit-material-of-weak-cart}
with $\alpha^{-1}$ for $\sigma$ and an identity for $\delubar$
to get $\delta\maps 1_{x_f}\To r_\alpha$ as well as an inverse for it.
It has the property required in \ref{third-axiom-fib}.

Now we prove condition~\ref{fourth-axiom-fib}, using the notation there,
and we wish to define the isomorphism in the left hand square.
We find we have two cartesian lifts of $\alubar'\dot\kubar$ to $gk_g$.
The first is the pasting
\[
  \pi^{-1} \alpha' (\rho\dot r_{\alpha'})
    \maps f k_f r_{\alpha'} \To g k_g
  \text{.}
\]
This is cartesian by Lemma~\ref{lem:closure-properties-of-cart-1,2-cells}\ref{lem:item:closed-under-composition},\ref{lem:item:iso-2-cell},
being composed of isomorphisms and the cartesian $\alpha'$.
The second is $\alpha\dot k_g$,
cartesian because $\alpha$ is cartesian and,
according to \ref{def:JfvB:whiskering-cart-2-cell},
its whiskering with any $1$-cell is cartesian.
These two cartesian lifts must be isomorphic,
so we get a unique iso-2-cell
between $f k_f r_{\alpha'}$ and $f r_\alpha k_g$,
over $\fubar\id_{\kubar}$,
that pastes with $\alpha$, $\rho$ and $\pi$ to give $\alpha'$.
Now we use Remark~\ref{rem:explicit-material-of-weak-cart}\ref{item:uniqueness-of-explicit-material-of-weak-cart}
to get a unique isomorphism in the left hand square of the diagram
with the required properties.

Finally, we shall prove \ref{fifth-axiom-fib},
which is similar to \ref{fourth-axiom-fib}.
Assume vertical $1$-cells $v_0$ and $v_1$ and a $2$-cell $\alpha_0$ over $\alubar$ as in the hypothesis of axiom \ref{fifth-axiom-fib}.
We use the cartesian property of the $2$-cell $\alpha \dot v_1$ to get a unique vertical $2$-cell
$\lambda \maps fv_0 \To f r_{\alpha} v_1$ such that $(\alpha \dot v_1) \oo \lambda = \alpha_0$.
By the cartesian structure of the $1$-cell $f$,
we can factor $\lambda$  as $f \dot \mu$ for a unique vertical $2$-cell $\mu$ with $f \dot \mu = \lambda$. Hence,  $(\alpha \dot v_1) \oo (f \dot \mu) = \alpha_0$.
\end{proof}

%%%%%%%%
%%%%%%%%
\section{Main results}
\label{sec:main-results}
%%%%%%%

We are now at a stage that we can state our main theorem.
Notice how our reformulation of the Johnstone criterion
assists our proof.
We do not have to deal with so many bipullback toposes,
and there is a single elementary topos $\qobar$ where
we examine models of the various contexts.

\begin{lem} \label{lem:main-thm}
  Let $U\maps \thT_1 \to \thT_0$ be an extension map of contexts with
  the fibration property in the Chevalley style (Definition~\ref{defn:Chevalley-fibration}),
  let $M$ be a model of $\thT_0$ in an elementary  topos $\CS$,
  and let $p\maps \CS[\thT_1/M] \to \CS$ be the classifier for $\thT_1/M$
  with generic model $G$.
  Suppose $f,g\maps q \toto p$ are two 1-cells in $\GTopos$
  and $\alpha\maps f \To g$ a 2-cell.
  We write $\varphi\eqdef\str{\alpha}(G,M)$,
  so that $\fiobar = \str{\alpha}G$ is a model of $\arrw{\thT_1}$ in $\qobar$.
  
  Then $\alpha$ is a cartesian 2-cell (in $\GTopos$ over $\ETopos$)
  iff $\eta_{\fiobar}$ is an isomorphism,
  where $(\eta_{\fiobar},\id)$ is the unit for
  $\model{\Gamma_1}{\qobar} \dashv \model{\Lambda_1}{\qobar}$.
\end{lem}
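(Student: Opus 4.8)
The plan is to translate both properties in the statement --- cartesianness of $\alpha$ with respect to $\ccod\maps\GTopos\to\ETopos$, and invertibility of $\eta_{\fiobar}$ --- into statements about $\thT_1$-models and their homomorphisms inside the single topos $\qobar$, via the classifying property of $p$, and then to recognise them as the same universal property.

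First I would observe that the Chevalley adjunction $\Gamma_1\dashv\Lambda_1$ in $\Con$, whose counit is an identity, is carried by the (strict, covariant) $2$-functor $\model{-}{\qobar}\maps\Con\to\Cat$ of model-taking in the fixed topos $\qobar$ (the same circle of ideas as Proposition~\ref{2-functor-U-Mod} and~\cite{vickers:au-top}) to an adjunction $\model{\Gamma_1}{\qobar}\dashv\model{\Lambda_1}{\qobar}$ whose counit is again an identity. Unwinding this at the $\arrw{\thT_1}$-model $\fiobar=\str{\alpha}G\maps\str{f}G\to\str{g}G$: one has $\model{\Gamma_1}{\qobar}(\fiobar)=(\str{g}G,\fiobar\dot U)$ with $\fiobar\dot U=\str{q}\str{\alubar}M$ (here I use that $U$ is an extension map, so $\str{(-)}$ commutes with $U$-reduction, cf.~\cite[Lemma 9]{vickers:au-top} and Construction~\ref{action-of-topos-1-cells-on-models-of-extensions}), and the identity counit forces the unit to have the shape $(\eta_{\fiobar},\id)$ with $\eta_{\fiobar}\maps\str{f}G\to\widehat{N}$ a \emph{vertical} $\thT_1$-homomorphism onto the canonical $\Lambda_1$-lift $\kappa_\alpha\maps\widehat{N}\to\str{g}G$ of $\str{g}G$ along $\str{q}\str{\alubar}M$, satisfying $\kappa_\alpha\circ\eta_{\fiobar}=\str{\alpha}G$. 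Consequently $\eta_{\fiobar}$ is invertible precisely when $\str{\alpha}G$ itself has the universal property of this $\Lambda_1$-lift among $\thT_1$-homomorphisms into $\str{g}G$ with prescribed $U$-reduct $\str{q}\str{\alubar}M$.

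Next I would bring in the classifier. By Proposition~\ref{prop:non-iso-downstairs}, specialised to the $P$-cartesian $1$-cells underlying $f$ and $g$ themselves (taking the $\Nobar_i$ to be $\str{f_i}G$ and all the comparison isomorphisms identities), together with Construction~\ref{action-of-topos-1-cells-on-models-of-extensions}, the assignment $f\mapsto\str{f}G$, $\alpha\mapsto\str{\alpha}G$ gives, for each fixed $\fubar$, a bijection between $2$-cells $f\To g$ over a given $\alubar$ and $\thT_1$-homomorphisms $\str{f}G\to\str{g}G$ over $\str{q}\str{\alubar}M$, compatible with vertical composition; and by Proposition~\ref{prop:classifier}(i) every $\thT_1$-model in $\qobar$ whose $U$-reduct is of the form $\str{q}\str{\eubar}M$ --- in particular $\widehat{N}$ --- is, up to isomorphism, $\str{e}G$ for a $1$-cell $e\maps q\to p$. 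Using this dictionary, realise $\widehat{N}$ as $\str{f'}G$ (up to iso) for a $1$-cell $f'\maps q\to p$ over $\fubar$, and $\kappa_\alpha$ correspondingly as $\str{\alpha'}G$ for a $2$-cell $\alpha'\maps f'\To g$ over $\alubar$, and $\eta_{\fiobar}$ as a vertical $2$-cell $\gamma'\maps f\To f'$ with $\alpha=\alpha'\circ\gamma'$. The unrestricted universal property of $\kappa_\alpha$ translates back through the dictionary to show that $\alpha'$ is $\ccod$-cartesian (only the easy implication ``universal against all test data $\Rightarrow$ universal against the test data coming from $\GTopos(q,p)$'' is needed here, matched against the elementary description of $\ccod$-cartesian $2$-cells recalled after Remark~\ref{rem:explicit-material-of-weak-cart}). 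Hence: if $\eta_{\fiobar}$ is invertible then so is $\gamma'$, and $\alpha=\alpha'\circ\gamma'$ is $\ccod$-cartesian by Lemma~\ref{lem:closure-properties-of-cart-1,2-cells}\ref{lem:item:closed-under-iso-2-cell}; conversely, if $\alpha$ is $\ccod$-cartesian then from $\alpha=\alpha'\circ\gamma'$ and Lemma~\ref{lem:closure-properties-of-cart-1,2-cells}\ref{lem:item:left-closed} the vertical $2$-cell $\gamma'$ is $\ccod$-cartesian, hence an isomorphism by Lemma~\ref{lem:closure-properties-of-cart-1,2-cells}\ref{lem:item:vert-cart-2-cell}, so $\eta_{\fiobar}$ is invertible.

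The step I expect to be the real obstacle is the bookkeeping behind that dictionary: checking, on the nose, that the passage through Construction~\ref{action-of-topos-1-cells-on-models-of-extensions} --- carrying canonical strict isomorphs, the structural iso-$2$-cells $\ftri$ and $\gtri$ of the $\GTopos$-$1$-cells, and the $\str{q}$-reindexing of $M$ --- turns the universal property of a $\Lambda_1$-lift into the defining factorization property of a $\ccod$-cartesian $2$-cell, and that Proposition~\ref{prop:non-iso-downstairs} really does supply the unique mediating $2$-cells needed over the relevant non-invertible downstairs data. Everything else is a routine application of the closure properties collected in Lemma~\ref{lem:closure-properties-of-cart-1,2-cells}.
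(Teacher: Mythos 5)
Your proposal is correct, and it is built from exactly the same three inputs as the paper's own proof: Proposition~\ref{prop:classifier} to realise the domain $\widehat{N}$ of the $\Lambda_1$-lift as $\str{f'}G$ for a 1-cell $f'$ over $\fubar$ (the paper's $e$), Proposition~\ref{prop:non-iso-downstairs} as the dictionary between 2-cells over a given downstairs 2-cell and $\thT_1$-homomorphisms over its image under $\str{q}\str{(-)}M$ (the 1-cells $(f,\id,\id)$, $(g,\id,\id)$ are indeed automatically $P$-cartesian, so the dictionary is available even over non-invertible $\betubar$), and the identity-counit adjunction transported to models in $\qobar$. Where you genuinely diverge is in how the two implications are extracted. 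The paper's ($\Leftarrow$) direction is, up to inserting $\eta_{\fiobar}^{-1}$, your verification that $\alpha'$ is cartesian; but its ($\Rightarrow$) direction constructs an explicit two-sided inverse $(\str{\beta}G)e^{-}$ for $\eta_{\fiobar}$, using cartesianness of $\alpha$ for one composite and, for the other, the observation that endomorphisms of an object in the image of $\model{\Lambda_1}{\qobar}$ are detected by $\model{\Gamma_1}{\qobar}$. Your factorization $\alpha=\alpha'\circ\gamma'$ with $\alpha'$ cartesian and $\gamma'$ vertical, followed by the cancellation property (Lemma~\ref{lem:closure-properties-of-cart-1,2-cells}\ref{lem:item:left-closed}) and the fact that vertical cartesian 2-cells are isomorphisms (\ref{lem:item:vert-cart-2-cell}), replaces that computation and yields both directions uniformly from a single cartesianness claim; this is arguably a cleaner packaging, and mirrors the factorization arguments the paper itself uses in Proposition~\ref{pro:Johnstone-fib-as-cod-fibrant-objects}. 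The bookkeeping you flag as the residual obstacle does go through, for precisely the reasons you anticipate, since it coincides with the paper's ($\Leftarrow$) computation.
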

\begin{proof}
  ($\Rightarrow$):
  Let $\Nobar$ be the domain of $\fiobar\dot\Gamma_1\dot\Lambda_1$,
  and let $\Nubar \eqdef \str{q}\str{\fubar}M$.
  Then (see diagram~\ref{diag:context-transform})
  \[
  \begin{split}
    \Nobar\dot U &= \fiobar\dot\Gamma_1\dot\Lambda_1\dot\Gamma_0\dot\pi_0\dot U
      = \fiobar\dot\Gamma_1\dot\Lambda_1\dot\arrw{U}\dot\dom
      = \fiobar\dot\Gamma_1\dot\Lambda_1\dot\Gamma_1\dot U_1\dot\dom
    \\
      &= \fiobar\dot\Gamma_1\dot U_1\dot\dom
      = \fiobar\dot\arrw{U}\dot\dom
      = \fiobar\dot\dom\dot U
      = (\str{f}G)\dot U
      = \Nubar
      \text{,}
  \end{split}
  \]
  and so $N\eqdef(\Nobar,\Nubar)$ is a model of $U$ in $q$.
  \[
    \begin{tikzcd}
      \str{f}G
        \ar[->, rr, "\fiobar\eqdef\str{\alpha}G"]
        \ar[->, d, "\eta_{\fiobar}"]
        \ar[->, dotted, dd, bend right=40, "\str{\beta'}G" description]
      && \str{g}G
        \ar[-, double, d, ""]
      \\
      \Nobar
        \ar[->, rr, "\fiobar\dot\Gamma_1\dot\Lambda_1"]
        \ar[->, dotted, d, "e^{-}"]
      && \str{g}G
        \ar[-, double, d, ""]
      \\
      \str{e}G
        \ar[->, dotted, rr, "\str{\gamma}G" below]
        \ar[->, dotted, uu, bend left=80, "\str{\beta}G"]
      && \str{g}G
    \end{tikzcd}
  \]
  
  By the classifier property of $p$ (Proposition~\ref{prop:classifier}),
  and taking $\eubar\eqdef\fubar$ and $e_{-}\eqdef\id\maps\Nubar=\str{\fubar}M$,
  we obtain $e\maps q\to p$ and $(e^{-},\id)\maps N \iso \str{e}(G,M)$.
  Now by Proposition~\ref{prop:non-iso-downstairs} we get a unique $\gamma\maps e \To g$
  over $\gamubar\eqdef\alubar$ such that
  $\fiobar\dot\Gamma_1\dot\Lambda_1 = (\str{\gamma}G)e^{-}$.
  Again by Proposition~\ref{prop:non-iso-downstairs} we get a unique $\beta'\maps f \To e$
  over $\id_{\fubar}$ such that
  $e^{-}\eta_{\fiobar} = \str{\beta'}G$,
  and since $(\str{\gamma}G)(\str{\beta'}G) = \str{\alpha}G$
  it follows that $\gamma\beta'=\alpha$.
  
  By cartesianness of $\alpha$ we also have a unique $\beta\maps e\To f$ over $\id_{\fubar}$
  such that $\gamma = \alpha\beta$,
  and since $\alpha\beta\beta'=\gamma\beta'=\alpha$
  it follows that $\beta\beta'=\id_f$.
  We deduce that $(\str{\beta}G)e^{-}\eta_{\fiobar}=\id_{\str{f}G}$.
  
  Finally $\eta_{\fiobar}(\str{\beta}G)e^{-} = \id_{\Nobar}$
  follows from the adjunction $\Gamma_1\dashv\Lambda_1$,
  because both sides reduce by $\Gamma_1$ to the identity.
  Hence $\eta_{\fiobar}$ is an isomorphism,
  with inverse $(\str{\beta}G)e^{-}$.
  
  ($\Leftarrow$):
  Let $e\maps q\to p$ with $\gamma\maps e\To f$ such that $\gamubar=\alubar\betubar$.
  \[
    \begin{tikzcd}
      \str{f}G
        \ar[->, rr, "\fiobar\eqdef\str{\alpha}G"]
        \ar[->, d, "\eta_{\fiobar}"]
      && \str{g}G
        \ar[-, double, d, ""]
      \\
      \Nobar
        \ar[->, rr, "\fiobar\dot\Gamma_1\dot\Lambda_1"]
      && \str{g}G
        \ar[-, double, d, ""]
      \\
      \str{e}G
        \ar[->, dotted, rr, "\str{\gamma}G" below]
        \ar[->, dotted, u, "\psiobar" right]
        \ar[->, dotted, uu, bend left, "\str{\beta}G"]
      && \str{g}G
    \end{tikzcd}
    \quad\quad
    \begin{tikzcd}
      \str{\fubar}M
        \ar[->, rr, "\fiubar\eqdef\str{\alubar}M"]
        \ar[-, d, double]
      && \str{\gubar}M
        \ar[-, double, d, ""]
      \\
      \Nubar
        \ar[->, rr, "\fiubar\eqdef\str{\alubar}M"]
      && \str{\gubar}M
        \ar[-, double, d, ""]
      \\
      \str{\eubar}M
        \ar[->, dotted, rr, "\str{\gamubar}M" below]
        \ar[->, dotted, u, "\str{\betubar}M" right]
        \ar[->, dotted, uu, bend left, "\str{\betubar}M"]
      && \str{\gubar}M
    \end{tikzcd}
  \]
  
  By the adjunction $\Gamma_1\dashv\Lambda_1$ there is a unique $\thT_1$-morphism
  $\psiobar\maps \str{e}G\to\Nobar$ over $\str{\betubar}M$
  such that $(\fiobar\dot\Gamma_1\dot\Lambda_1)\psiobar = \str{\gamma}G$.
  Because $\eta_{\fiobar}$ is an isomorphism this corresponds to a unique
  $\psiobar'\maps \str{e}G\to\str{f}G$ over $\str{\betubar}M$ such that
  $\fiobar \psiobar'=\str{\gamma}G$.
  By Proposition~\ref{prop:non-iso-downstairs} this corresponds to a unique
  $\beta\maps e \To f$ over $\betubar$ such that
  $(\str{\alpha}G) (\str{\beta}G)=\str{\gamma}G$,
  \ie unique such that $\alpha\beta=\gamma$.
  This proves that $\alpha$ is cartesian.
\end{proof}

\begin{thm}
\label{thm:main}
If $U\maps \thT_1 \to \thT_0$ is an extension map of contexts with (op)fibration property
in the Chevalley style (Definition~\ref{defn:Chevalley-fibration}),
and $M$ a model of $\thT_0$ in an elementary  topos $\CS$,
then $p\maps \CS[\thT_1/M] \to \CS$ is an (op)fibration in the $2$-category $\ETopos$
in the Johnstone style (Definition~\ref{defn:Johnstone-2-fib}).
\end{thm}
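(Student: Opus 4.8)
The plan is to reduce to the reformulation of Johnstone's criterion supplied by Proposition~\ref{pro:Johnstone-fib-as-cod-fibrant-objects}. Since $\GTopos$ is $\KK_{\CD}$ with $\KK=\ETopos$ and $\CD$ the class of bounded geometric morphisms, it is enough to show that $p$, viewed as a $0$-cell of $\GTopos$, is fibrational for the $2$-functor $\ccod\maps\GTopos\to\ETopos$ in the sense of Definition~\ref{def:Johnstone-fib-via-Buckley}; that is, I would verify the three conditions \ref{def:JfvB:cart-lift-1-cell}, \ref{def:JfvB:cart-lift-2-cell} and \ref{def:JfvB:whiskering-cart-2-cell}. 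Condition \ref{def:JfvB:cart-lift-1-cell} is immediate: $p$ is a classifying topos over $\CS$, hence bounded, hence bicarrable in $\ETopos$ by \cite[B3.3.6]{johnstone:elephant1}, and by Proposition~\ref{pro:cart-1-cell-aka-bipullback} the cartesian lifts of $1$-cells along $\ccod$ are precisely the bipullbacks of $p$, which therefore exist and remain in $\CD$. Throughout, the workhorse will be Lemma~\ref{lem:main-thm}: a $2$-cell $\alpha\maps f\To g$ of $\GTopos$ over $\ETopos$ is cartesian iff the unit $\eta_{\str{\alpha}G}$ of $\model{\Gamma_1}{\qobar}\dashv\model{\Lambda_1}{\qobar}$ is an isomorphism.

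For \ref{def:JfvB:cart-lift-2-cell} --- that each $\ccod_{q,p}\maps\GTopos(q,p)\to\ETopos(\qubar,\pubar)$ is a Grothendieck fibration --- I would, given $g\maps q\to p$ and a $2$-cell $\alubar\maps\fubar\To\gubar$ in $\ETopos$, construct a cartesian lift $\alpha\maps f\To g$ over $\alubar$ directly from the Chevalley structure of $U$. First I would observe that $\Xi\eqdef(\str{g}G,\ \str{q}\str{\alubar}M)$ is a model of $\cod^{*}\thT_1$ in $\qobar$: the defining condition $(\str{g}G)\dot U=\str{q}\str{\gubar}M$ holds because $\str{g}(G,M)$ is a model of $U$ in $q$. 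Then $\Xi\dot\Lambda_1=\model{\Lambda_1}{\qobar}(\Xi)$ is a model of $\arrw{\thT_1}$ in $\qobar$; write $\Nobar$ for its domain. The same computation as in the proof of Lemma~\ref{lem:main-thm} (which uses only $\fiobar\dot\Gamma_1$, played here by $\Xi$) gives $\Nobar\dot U=\str{q}\str{\fubar}M$, so $N\eqdef(\Nobar,\str{\fubar}M)$ is a model of $U$ in $q$. Applying the classifier property, Proposition~\ref{prop:classifier}(i), to $N$, to $\fubar$ and to the identity isomorphism downstairs yields a $P$-cartesian $1$-cell $f\maps q\to p$ over $\fubar$ together with an isomorphism $f^{-}\maps\Nobar\iso\str{f}G$. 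Next I would note that $\varphi\eqdef(\Xi\dot\Lambda_1,\ \str{\alubar}M)$ is a homomorphism of $U$-models from $N$ to $\str{g}(G,M)$ (using $\Gamma_1\Lambda_1=\id$, valid because the Chevalley counit is an identity), and feed $\varphi$ and $\alubar$ into Proposition~\ref{prop:non-iso-downstairs} to obtain a unique $2$-cell $\alpha\maps f\To g$ over $\alubar$ with $(\str{\alpha}G)\circ f^{-}=\Xi\dot\Lambda_1$; equivalently, $(f^{-},\id)$ is an isomorphism in $\model{\arrw{\thT_1}}{\qobar}$ from $\Xi\dot\Lambda_1$ to $\str{\alpha}G$.

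To conclude that this $\alpha$ is cartesian I would invoke Lemma~\ref{lem:main-thm} once more: $\str{\alpha}G$ is isomorphic to the object $\Xi\dot\Lambda_1$, which lies in the image of $\model{\Lambda_1}{\qobar}$, and the triangle identity $\eta_1\dot\Lambda_1=\id_{\Lambda_1}$ of the Chevalley adjunction (again using that the counit is an identity) forces the unit of $\model{\Gamma_1}{\qobar}\dashv\model{\Lambda_1}{\qobar}$ to be the identity at every object of that image, hence --- by naturality of the unit --- an isomorphism at every object isomorphic to one; thus $\eta_{\str{\alpha}G}$ is an isomorphism and $\alpha$ is cartesian, which establishes \ref{def:JfvB:cart-lift-2-cell}. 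For \ref{def:JfvB:whiskering-cart-2-cell}, given a cartesian $\alpha\maps f\To g$ with $f,g\maps q\to p$ and any $k\maps q'\to q$ in $\GTopos$, I would argue that $\str{(\alpha\dot k)}G=\str{k}(\str{\alpha}G)$ by strict functoriality of the model action, that the adjunction $\model{\Gamma_1}{\qobar'}\dashv\model{\Lambda_1}{\qobar'}$ is obtained from $\model{\Gamma_1}{\qobar}\dashv\model{\Lambda_1}{\qobar}$ by transporting along the action of $k$ (as $\Gamma_1,\Lambda_1$ are context maps), and hence that $\eta_{\str{(\alpha\dot k)}G}$ is, up to canonical isomorphism, the image of $\eta_{\str{\alpha}G}$ under $\str{k}$; since $\str{k}$ is up to canonical isomorphism an inverse-image AU-functor and so preserves isomorphisms, $\alpha\dot k$ is cartesian by Lemma~\ref{lem:main-thm}. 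With \ref{def:JfvB:cart-lift-1-cell}--\ref{def:JfvB:whiskering-cart-2-cell} in hand, Proposition~\ref{pro:Johnstone-fib-as-cod-fibrant-objects} gives that $p$ satisfies the Johnstone criterion; the opfibration statement is entirely dual, using the left-adjoint structure $\Lambda_0$ coming from $U$ being a Chevalley opfibration and reversing all $2$-cells. The hardest part, I expect, will be \ref{def:JfvB:cart-lift-2-cell}: identifying the correct $\cod^{*}\thT_1$-model $\Xi$, verifying its $U$-reduct, and threading it through Propositions~\ref{prop:classifier} and~\ref{prop:non-iso-downstairs} so that the resulting $\alpha$ is visibly --- up to isomorphism --- in the image of $\Lambda_1$, which is exactly where the identity-counit form of the Chevalley criterion is used.
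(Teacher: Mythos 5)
Your proposal is correct and follows essentially the same route as the paper's proof: reduction via Proposition~\ref{pro:Johnstone-fib-as-cod-fibrant-objects}, construction of the cartesian lift of a $2$-cell by applying $\model{\Lambda_1}{\qobar}$ to the $\cod^{*}\thT_1$-model $(\str{g}G,\str{q}\str{\alubar}M)$ (your $\Xi$ is the paper's $\fg$), passage through Propositions~\ref{prop:classifier} and~\ref{prop:non-iso-downstairs}, and the characterization of cartesian $2$-cells by Lemma~\ref{lem:main-thm} via the unit being an isomorphism on the image of $\Lambda_1$. The whiskering argument for condition~\ref{def:JfvB:whiskering-cart-2-cell} also matches the paper's, transporting the unit along $\str{k}$ up to canonical isomorphism.
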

\begin{proof}
Here we only prove the theorem for the case of fibrations. A proof for the opfibration case is similarly constructed.
According to Proposition~\ref{pro:Johnstone-fib-as-cod-fibrant-objects},
in order to establish that $p$ is a fibration in the 2-category $\ETopos$,
we have to verify that conditions \ref{def:JfvB:cart-lift-1-cell}-\ref{def:JfvB:whiskering-cart-2-cell}
in Definition~\ref{def:Johnstone-fib-via-Buckley} hold for $P=\ccod\maps\KK_\CD \to \KK$,
where $\KK=\ETopos$,
$\CD$ is the class of bounded geometric morphisms,
and so $\KK_{\CD}$ is $\GTopos$.

By Proposition~\ref{pro:cart-1-cell-aka-bipullback},
condition \ref{def:JfvB:cart-lift-1-cell} follows from the fact that
$p$ is bicarrable.

To prove condition \ref{def:JfvB:cart-lift-2-cell},
let $q\maps\qobar\to\qubar$ be a bounded geometric morphism,
let $g\maps q \to p$ be a 1-cell in $\KK_\CD$,
let $\fubar \maps \qubar \to \CS$  be geometric morphism
and $\alubar \maps \fubar \To \gubar$
a geometric transformation.
\[
\begin{tikzpicture}[baseline=(c2.base),scale=1,every node/.style={transform shape}]
\node(c0) at (0,1.5) {$\qobar$};
\node(c1) at (3,1.5) {$\CS[\thT_1/M]$};
\node(c2) at (0,-1.5) {$\qubar$};
\node(c3) at (3,-1.5) {$\CS$};
\node(d2) at (1.5,0.4) {$\gtdar$};
\node(e1) at (1.3,-2) {};
\node(e2) at (1.7,-1.2) {};
\node(e3) at (1.3,1) {};
\node(e4) at (1.7,1.8) {};
\draw[->] (c2) to[bend right=40] node(f1){} node[below]{$\fubar$} (c3);
\draw[->] (c2) to[bend left=30] node(g1){} node[above]{$\gubar$} (c3);
\draw[->] (c0) to node(y){} node[left]{$q$} (c2);
\draw[->] (c0) to[bend left=30] node(g2){} node[above]{$\gobar$} (c1);
%\draw[->, dashed] (c0) to[bend right=30] node(g3){} node[above]{$\gobar$} (c1);
\draw[->] (c1) to node(x){} node[right]{$p$} (c3);
\draw[double,double equal sign distance,-implies, shorten <=2, shorten >=2] (e1) to node[left]{$\alubar$} (e2);
\end{tikzpicture}
\]

We seek $f$ over $\fubar$ with a cartesian lift
$\alpha \maps f \To g$ of $\alubar$.
Notice that for the given model $M$ of $\thT_0$ in $\CS$, the component $M$ of the natural transformation $\alubar$
gives us a morphism
$\str{\alubar}M\maps \str{\fubar}M \to \str{\gubar}M$
of $\thT_0$-models in $\qubar$,
hence a $\arrw{\thT_0}$-model in $\qubar$.
Let us  write it as $\fiubar\maps\Nubar_f\to\Nubar_g$.
Then $\str{q}\fiubar$ is a model of $\arrw{\thT_0}$ in $\qobar$.

Let $G$ be the generic model of $\thT_1/M$ in
$\CS[\thT_1/M]$,
so that $(G, M)$ is a model of $U$ in $p$.
Hence we get $(\Nobar_g,\Nubar_g)\eqdef \str{g}(G,M)$
a model of $U$ in $q$, and
\[
  \fg \eqdef ( \Nobar_g, \str{q} \fiubar )
    \in \model{\cod^*(\thT_1)}{\qobar}
    \text{.}
\]
Then $\fg\dot\Lambda_1$
(see diagram \ref{diag:context-transform})
is a model $\fiobar\maps \Nobar_f \to \Nobar_g$
of $\arrw{\thT_1}$ in $\qobar$,
with $\Nobar_f = \fg \dot (\Lambda_1 ; \Gamma_0; \pi_0)$.
We also see that
$\fiobar \dot \arrw{U}
  = \fg \dot (\Lambda_1 ; \arrw{U})
  = \str{q}\fiubar$,
so $\varphi\eqdef(\fiobar,\fiubar)\maps N_f\to N_g$
is a homomorphism of $U$-models in $q$.
\[
  \begin{tikzcd}
    \Nobar_f
      \ar[r, "\fiobar"]
      \ar[d, maps to]
    & \Nobar_g
      \ar[d, maps to]
    \\
    \str{q}\Nubar_f
      \ar[r, "\str{q}\fiubar"]
    & \str{q}\Nubar_g
  \end{tikzcd}
\]

Thus we get two objects $(q,N_f)$ and $(q,N_g)$ of $\PQE$
together with $\varphi$ as in Proposition~\ref{prop:non-iso-downstairs}.
In addition we have $(p,(G,M))$,
and a $P$-cartesian 1-cell
\[
  (g, (\id\maps \Nobar_g = \str{g}G, \id\maps \Nubar = \str{\gubar}M))
  \maps (q,N_g) \to (p,(G,M))
  \text{.}
\]
By the classifier property we can also find a $P$-cartesian 1-cell
\[
  (f, (f^{-}, f_{-}))
  \maps (q,N_f) \to (p,(G,M))
  \text{.}
\]
We can now apply Proposition~\ref{prop:non-iso-downstairs} to find a 2-cell
$\alpha\maps f \To g$ over $\alubar$ that gives us $\fiobar$.

Since $\fiobar$ is defined to be of the form $\fg\dot\Lambda_1$,
so $\fiobar\dot\Gamma_1\dot\Lambda_1 = \fiobar$,
we find that $\eta_{\fiobar}$ is the identity
and $\eta_{\str{\alpha}G}$ is an isomorphism.
It follows from Lemma~\ref{lem:main-thm} that $\alpha$ is cartesian.

For proving \ref{def:JfvB:whiskering-cart-2-cell},
suppose we have $f,g\maps q \toto p$ and a cartesian 2-cell $\alpha\maps f\To g$.
By Lemma~\ref{lem:main-thm}, $\eta_{\str{\alpha}G}$ is an isomorphism.
Take any $1$-cell $k\maps q' \to q$ in $\GTopos$ where $q' \maps \qobar' \to \qubar'$.
Relative to the isomorphism of models
$\str{k} (\fg \dot \Lambda_1) \iso (\str{k} \fg) \dot \Lambda_1$,
$\str{k}$ preserves the unit $\eta$,
and so $\eta_{\str{k}\str{\alpha}G}$ is an isomorphism and,
by Lemma~\ref{lem:main-thm}, $\alpha\dot k$ is cartesian.

\end{proof}

The result can now be applied to the examples in \textsection\ref{sec:strict-internal-fibrations-in-2cat}.

\begin{enumerate}[label=(\roman*)]
\item
  The classifiers for Example~\ref{ex:torsors} are, by Diaconescu's theorem,
  those bounded geometric morphisms got as $[\mathbb{C},\CS]\to\CS$ for $\mathbb{C}$
  an internal category in $\CS$.
  Example~\ref{ex:torsorsOpfib} now tells us that such
  geometric morphisms are opfibrations in $\ETopos$.
  This is already known, of course, and appears
  in~\cite[B4.4.9]{johnstone:elephant1}.
  Note, however, that our calculation to prove the
  opfibration property in $\Con$ is elementary in nature.
  The proof of~\cite{johnstone:elephant1} verifies that the class of all such geometric morphisms
  satisfies the ``covariant tensor condition'',
  and such a technique cannot work for AUs as
  it uses the direct image parts of geometric morphisms.
\item
  The classifiers for Example~\ref{ex:pointed-set} are the local
  homeomorphisms.
  Their opfibrational character follows simply
  from our results, though note that it can also
  be deduced as a special case of the torsor result.
\item
  By Example~\ref{ex:SpecLFib},
  the classifiers for Example~\ref{ex:SpecL} are fibrations.
  Since the spectra of distributive lattices
  correspond to propositional coherent theories,
  this fibrational nature is already known
  from~\cite[B4.4.11]{johnstone:elephant1},
  which says that any coherent topos is a fibration.
  It will be interesting to see how far our methods can cover this general result.
\end{enumerate}

%%%%%%%%%
%%%%%%%%%
\section{Concluding thoughts}
%%%%%%%%%

What we have shown in this paper is that an important and extensive class of fibrations/opfibrations in $2$-category $\ETopos$ of toposes arises from strict fibrations/opfibrations in $2$-category $\Con$ of contexts. There are several advantages: first, the structure of strict fibrations/opfibrations in $\Con$ is much easier to study because of explicit and combinatorial description of $\Con$ and in particular due to existence of comma objects in there. Second, proofs concerning properties of based-toposes arising from $\Con$ are very economical since one only needs to work with strict models of contexts. Not only does this approach help us to avoid taking the pain of working with limits and colimits in $\ETopos / \CS$ and bookkeeping of coherence issues arising in this way, but it also gives us insights in inner working of $2$-categorical aspects of toposes via more concrete and constructive approach of contexts buildings and context extensions.

There is also an advantage from foundational point of view; for any $\CS$-topos $\CE$, there are logical properties internal to $\CE$ which are determined by internal logic of $\CS$. A consequence of this work is that  we can reason in $2$-category of contexts to get uniform results about toposes independent of their base $\CS$.

We hope that in the future work we can investigate the question that how much of 2-categorical structure of $\ETopos$ can be presented by contexts, and more importantly whether we can find simpler proofs in $\Con$ that can be transported to toposes.

%%%%%%%%%
%%%%%%%%%
\section{Acknowledgements}
%%%%%%%%%
The ideas and result of this paper were presented by the first author in PSSL 101 in University of Leeds (16-17 September 2017) and later in the conference Toposes in Como (27-29 June 2018). He wishes to thank the organizers of these events, particularly Nicola Gambino and Olivia Caramello, for the great opportunity to outline and discuss some of the ideas of this paper in those venues.

%%%%%%%%%%%%%%%%%%%%%%%%%%%%%%%%%%%%%%%%%%%%%%%%%%%%%%%%%
%%%%%%%%%%%%%%%%%%%%%%%%%%%%%%%%%%%%%%%%%%%%%%%%%%%%%%%%%
%%%%%%%%%%%%%%%%%%%%%%%%%%%%%%%%%%%%%%%%%%%%%%%%%%%%%%%%%
%%%%%%%%%%%%%%%%%%%%%%%%%%%%%%%%%%%%%%%%%%%%%%%%%%%%%%%%%
%
% END DOCUMENT
%
%%%%%%%%%%%%%%%%%%%%%%%%%%%%%%%%%%%%%%%%%%%%%%%%%%%%%%%%%
%%%%%%%%%%%%%%%%%%%%%%%%%%%%%%%%%%%%%%%%%%%%%%%%%%%%%%%%%
%%%%%%%%%%%%%%%%%%%%%%%%%%%%%%%%%%%%%%%%%%%%%%%%%%%%%%%%%
%%%%%%%%%%%%%%%%%%%%%%%%%%%%%%%%%%%%%%%%%%%%%%%%%%%%%%%%%

%\printbibliography
\bibliography{Refs}

\begin{thebibliography}{SVW12}

\bibitem[Bak12]{bakovic:fib-in-tricat}
Igor Bakovic.
\newblock Fibrations in tricategories.
\newblock {\em 93rd Peripatetic Seminar on Sheaves and Logic, University of
  Cambridge}, 2012.

\bibitem[Buc14]{buckley:fibred-bicat}
Mitchell Buckley.
\newblock Fibred 2-categories and bicategories.
\newblock {\em J. Pure Appl. Algebra}, vol. 218(6):1034--1074, 2014.

\bibitem[Gra66]{gray:fibcofibcat}
John~W. Gray.
\newblock Fibred and cofibred categories.
\newblock {\em In Proc. Conf. Categorical Algebra (La Jolla, Calif., 1965),
  Springer}, pages 21--83, 1966.

\bibitem[Haz18]{hazratpour:thesis}
Sina Hazratpour.
\newblock A logical study of some 2-categorical aspects of toposes.
\newblock {\em PhD thesis, University of Birmingham, in preparation (2018)},
  2018.

\bibitem[Her99]{hermida:fib-2cat}
Claudio Hermida.
\newblock Some properties of fib as a fibred 2-category.
\newblock vol. 134:83--109, 1999.

\bibitem[Joh93]{johnstone:fib-par-prod}
Peter Johnstone.
\newblock Fibrations and partial products in a 2-category.
\newblock {\em Applied Categorical Structures}, vol.1:141--179, Jun 1993.
\newblock \href {http://dx.doi.org/10.1007/BF00880041}
  {\path{doi:10.1007/BF00880041}}.

\bibitem[Joh02]{johnstone:elephant1}
Peter Johnstone.
\newblock {\em Sketches of an Elephant: A Topos Theory Compendium}, volume~1 of
  {\em Oxford Logic Guides}.
\newblock Oxford University Press, 2002.

\bibitem[MV12]{mv:ind-au}
Maria~Emilia Maietti and Steve Vickers.
\newblock An induction principle for consequence in arithmetic universes.
\newblock {\em Journal of Pure and Applied Algebra}, vol.216:2049--2067, 2012.

\bibitem[PR91]{pr:pie-limits}
John Power and Edmund Robinson.
\newblock A characterization of pie limits.
\newblock {\em Mathematical Proceedings of the Cambridge Philosophical
  Society}, vol.175:23--47, 1991.

\bibitem[PV07]{pv:PHLCC}
Erik Palmgren and Steven Vickers.
\newblock Partial horn logic and cartesian categories.
\newblock {\em Annals of Pure and Applied Logic}, 145(3):314--353, 2007.

\bibitem[Str74]{street:fib-yoneda-2cat}
Ross Street.
\newblock Fibrations and yoneda's lemma in a 2-category.
\newblock {\em Lecture Notes in Math., Springer, Berlin}, vol.420:104--133,
  1974.

\bibitem[Str80]{street:fib-in-bicat}
Ross Street.
\newblock Fibrations in bicategories.
\newblock {\em Cahiers de Topologie et G\'eom\'etrie Diff\'erentielle
  Cat\'egoriques}, vol.21(2):111--160, 1980.

\bibitem[Str81]{street:consp}
Ross Street.
\newblock Conspectus of variable categories.
\newblock {\em Journal of Pure and Applied Algebra}, vol.21(3):307--338, June
  1981.

\bibitem[SVW12]{svw:Gelfand-spectra}
Bas Spitters, Steven~J. Vickers, and Sander Wolters.
\newblock Gelfand spectra in grothendieck toposes using geometric mathematics.
\newblock {\em In Proceedings of the 9th International Workshop on Quantum
  Physics and Logic (QPL 2012), Brussels 2012 (ed. Duncan, Panangaden)}, 2012.

\bibitem[Vic93]{vickers:Infosys}
Steven Vickers.
\newblock Information systems for continuous posets.
\newblock {\em Theoretical Computer Science}, 114:201--229, 1993.

\bibitem[Vic99]{vickers:TopCat}
Steven Vickers.
\newblock Topical categories of domains.
\newblock {\em Mathematical Structures in Computer Science}, 9:569--616, 1999.

\bibitem[Vic05]{vickers:LocCompA}
Steven Vickers.
\newblock Localic completion of generalized metric spaces {I}.
\newblock {\em Theory and Applications of Categories}, 14:328--356, 2005.

\bibitem[Vic16]{vickers:sk-au}
Steven Vickers.
\newblock Sketches for arithmetic universes.
\newblock 2016.
\newblock URL: \url{https://arxiv.org/abs/1608.01559}.

\bibitem[Vic17]{vickers:au-top}
Steven Vickers.
\newblock Arithmetic universes and classifying toposes.
\newblock {\em Cahiers de Topologie et G\'eom\'etrie Diff\'erentielle
  Cat\'egoriques}, vol.58(4):213--248, 2017.
\newblock URL:
  \url{http://cahierstgdc.com/wp-content/uploads/2018/01/Vickers-58-34.pdf}.

\end{thebibliography}
\end{document}